\numberwithin{equation}{section}
\theoremstyle{plain}
\newtheorem{thm}{Theorem}[section]
\newtheorem{dfn}[thm]{Definition}
\newtheorem{prop}[thm]{Proposition}
\newtheorem{cor}[thm]{Corollary}
\newtheorem{lem}[thm]{Lemma}
\newtheorem*{thm*}{Theorem}
\newtheorem{conj}[thm]{Conjecture}
\newtheorem*{cl*}{Claim}
\theoremstyle{definition}
\newtheorem{exa}[thm]{Example}
\newtheorem{rem}[thm]{Remark}
\newcommand{\nc}{\newcommand}
\nc{\mf}[1]{{\mathfrak{#1}}}
\nc{\mb}[1]{{\mathbf{#1}}}
\nc{\bb}[1]{{\mathbb{#1}}}
\nc{\ms}[1]{{\mathscr{#1}}}
\nc{\cal}[1]{{\mathcal{#1}}}
\nc{\ol}[1]{{\overline{#1}}}
\nc{\E}[1]{{\mb{E}_{\mb{#1}}}}
\nc{\pE}[1]{{\mb{E}'_{{#1}}}}
\nc{\ppE}[1]{{\mb{E}''_{{#1}}}}
\nc{\EO}[2]{{\mb{E}_{\mb{#1},#2}}}
\nc{\pEO}[2]{{\mb{E}'_{\mb{#1},#2}}}
\nc{\ppEO}[2]{{\mb{E}''_{\mb{#1},#2}}}
\nc{\EE}[2]{{\mb{E}(\mb{#1})_{#2}}}
\nc{\Fl}[2]{{\cal{F}_{\mb{#1},\mb{#2}}}}
\nc{\tFl}[3]{{\widetilde{\cal{F}}_{\mb{#1},\mb{#2};{#3}}}}
\nc{\LL}[3]{{L_{\mb{#1},\mb{#2};{#3}}}}
\nc{\mP}[2]{{\ms{P}_{\mb{#1},{#2}}}}
\nc{\mQ}[2]{{\ms{Q}_{\mb{#1},{#2}}}}
\nc{\tE}[1]{{{}^\theta\mb{E}_{\mb{#1}}}}
\nc{\tpE}[1]{{{}^\theta\mb{E}'_{{#1}}}}
\nc{\tppE}[1]{{{}^\theta\mb{E}''_{{#1}}}}
\nc{\tEO}[2]{{{}^\theta\mb{E}_{\mb{#1},#2}}}
\nc{\tpEO}[2]{{{}^\theta\mb{E}'_{\mb{#1},#2}}}
\nc{\tppEO}[2]{{{}^\theta\mb{E}''_{\mb{#1},#2}}}
\nc{\tEE}[2]{{{}^\theta\mb{E}(\mb{#1})_{#2}}}
\nc{\thF}[2]{{{}^\theta\cal{F}_{\mb{#1},\mb{#2}}}}
\nc{\ttFl}[3]{{{}^\theta\widetilde{\cal{F}}_{\mb{#1},\mb{#2};{#3}}}}
\nc{\tLL}[3]{{{}^{\theta}\!L_{\mb{#1},\mb{#2};{#3}}}}
\nc{\tmP}[2]{{{}^\theta\!\!\ms{P}_{\mb{#1},{#2}}}}
\nc{\tmQ}[2]{{{}^\theta\!\!\ms{Q}_{\mb{#1},{#2}}}}
\nc{\tpi}[2]{{{}^\theta\pi_{\mb{#1},\mb{#2}}}}
\nc{\tK}[2]{{}^{\theta}\!K_{\mb{#1},{#2}}}
\nc{\ttK}[1]{{}^{\theta}\!K_{\mb{#1}}}
\nc{\tKK}[1]{{}^{\theta}\!K_{#1}}
\nc{\cl}{\colon}
\nc{\scbul}{{\,\raise1pt\hbox{$\scriptscriptstyle\bullet$}\,}}
\nc{\vac}{{\phi}}
\nc{\te}{\tilde{e}}
\nc{\tei}{\tilde{e}_i}
\nc{\tf}{\tilde{f}}
\nc{\tfi}{\tilde{f}_i}
\nc{\tEi}{\tilde{E}_i}
\nc{\tFi}{\tilde{F}_i}
\nc{\isoto}[1][]%
{{\mathop{\buildrel{\sim}\over\longrightarrow}\limits_{#1}}}
\nc{\low}{{\mathrm{low}}}
\nc{\upper}{{\mathrm{up}}}
\nc{\seteq}{\mathbin{:=}}
\nc{\set}[2]{\left\{#1 \mid #2 \right\}}
\nc{\Uf}[1][\mf{g}]{U^-_v(#1)}
\nc{\U}[1][\mf{g}]{U_v(#1)}
\nc{\tV}{{}^\theta\cal{V}}
\nc{\tG}[1]{{}^\theta\mb{G}_{\mb{#1}}}
\nc{\eq}{\begin{eqnarray}}
\nc{\eneq}{\end{eqnarray}}
\nc{\eqn}{\begin{eqnarray*}}
\nc{\eneqn}{\end{eqnarray*}}
\nc{\ba}{\begin{array}}
\nc{\ea}{\end{array}}
\nc{\benu}{\begin{enumerate}}
\nc{\eenu}{\end{enumerate}}
\newcommand{\bnum}{\begin{enumerate}}
\newcommand{\enum}{\end{enumerate}}
\nc{\bit}{\begin{itemize}}
\nc{\eit}{\end{itemize}}
\def\vo{\mathop{\mathrm{out}}\nolimits}
\def\vi{\mathop{\mathrm{in}}\nolimits}
\def\ud{\mathop{\mathrm{\wt}}\nolimits}
\def\Hom{\mathop{\mathrm{Hom}}\nolimits}
\def\Res{\mathop{\mathrm{Res}}\nolimits}
\def\Ad{\mathop{\mathrm{Ad}}\nolimits}
\def\rHom{\mathop{\mb{R}\mathscr{H}\!om}\nolimits}
\def\al{\mathop{\mathrm{\alpha}}\nolimits}
\def\la{\mathop{\mathrm{\lambda}}\nolimits}
\def\wt{\mathop{\mathrm{wt}}\nolimits}
\def\ord{\mathop{\mathrm{ord}}\nolimits}
\def\id{\mathop{\mathrm{id}}\nolimits}
\def\Coker{\mathop{\mathrm{Coker}}\nolimits}
\def\Supp{\mathop{\mathrm{Supp}}\nolimits}
\def\Alt{\mathop{\mathrm{Alt}}\nolimits}
\def\pt{\mathop{\mathrm{pt}}\nolimits}
\def\rank{\mathop{\mathrm{rank}}\nolimits}
\def\Ig{\mathop{\mathrm{Im}}\nolimits}
\def\Perv{\mathop{\mathrm{Perv}}\nolimits}
\def\tr{\mathop{\mathrm{tr}}\nolimits}
\def\IC{\mathop{\mathrm{IC}}\nolimits}
\title{A quiver construction of symmetric crystals}
\author{Naoya Enomoto}
\address{Research Institute for Mathematical Sciences\\
Kyoto University\\
Kyoto 606--8502, Japan
}
\email{henon@kurims.kyoto-u.ac.jp}
\thanks{The author is partially supported by JSPS Research Fellowships for Young Scientists.}
\begin{document}
\maketitle
\begin{abstract}
In the papers \cite{EK}, \cite{EK2} and \cite{EK3} with Masaki Kashiwara, the author introduced the notion of symmetric crystals and presented the Lascoux-Leclerc-Thibon-Ariki type conjectures for the affine Hecke algebras of type $B$. Namely, we conjectured that certain composition multiplicities and branching rules for the affine Hecke algebras of type $B$ are described by using the lower global basis of symmetric crystals of $V_\theta(\lambda)$. In the present paper, we prove the existence of crystal bases and global bases of $V_\theta(0)$ for any symmetric quantized Kac-Moody algebra by using a geometry of quivers (with a Dynkin diagram involution). This is analogous to George Lusztig's geometric construction of $U_v^-$ and its lower global basis.
\end{abstract}
\section{Introduction}
\subsection{}
\ Let $K^{\rm{AHA}}_n$ be the Grothendieck group of the affine Hecke algebra $H_n(q)$ of type $A_n$ and set $K^{\rm{AHA}}=\bigoplus_{n \ge 0}K_n^{\rm{AHA}}$. Generalizing the LLT conjecture \cite{LLT} for the Hecke algebra of type $A$, S. Ariki \cite{Ari} proved that $K^{\rm{AHA}} \otimes_{\bb{Z}} \bb{C}$ is isomorphic to $U^{-}(\mf{g})$ as $U^-(\mf{g})$-modules. Here $\mf{g}=\widehat{\mf{sl}}_{\ell-1}$ or $\mf{gl}_\infty$ according that the parameter $q$ of the affine Hecke algebras of type $A$ is a primitive $\ell$-th root of unity or not a root of unity. This isomorphism sends the irreducible modules of the affine Hecke algebras to the specialization of the upper global basis of $U_v^-(\mf{g})$ at $v=1$. His proof is based on two results in the geometric representation theory. One is the equivariant $K$-theoretic description of the irreducible and standard modules of the affine Hecke algebras by Chriss-Ginzburg and Kazhdan-Lusztig, and the other is G. Lusztig's geometric construction \cite{Lus1} of the lower global basis of $U_v^-(\mf{g})$. Lusztig's theory is summarized as follows. \\
\quad Let $\mf{g}$ be a symmetric Kac-Moody algebra and $I$ an index set of simple roots of $\mf{g}$. For a fixed set of arrows $\Omega$, we consider $(I,\Omega)$ as a (finite) oriented graph. We call $(I,\Omega)$ a quiver. For an $I$-graded vector space $\mb{V}$, we define the moduli space of representations of quiver $(I,\Omega)$ by
\[
\EO{V}{\Omega}=\bigoplus_{i \stackrel{\Omega}{\longrightarrow} j}\Hom(\mb{V}_i,\mb{V}_j).
\]
The algebraic group $G_{\mb{V}}=\prod_{i \in I}GL(\mb{V}_i)$ acts on $\EO{V}{\Omega}$. Lusztig introduced a certain full subcategory $\mQ{V}{\Omega}$ of $\ms{D}(\EO{V}{\Omega})$ where $\ms{D}(\EO{V}{\Omega})$ is the bounded derived category of constructible complexes of sheaves on $\EO{V}{\Omega}$ (for the definition, see section \ref{sec:Lus}). Let $K(\mQ{V}{\Omega})$ be the Grothendieck group of $\ms{Q}_{\mb{V},\Omega}$. He constructed the induction operators $f_i$ and the restriction operators $e'_i$ on the Grothendieck group $K_{\Omega}:=\oplus_{\mb{V}}K(\mQ{V}{\Omega})$, where $\mb{V}$ runs over the isomorphism classes of $I$-graded vector spaces. He proved the following theorem.
\begin{thm}[Lusztig]\label{Lthm1} \quad
\bnum[(i)]
\item The operators $e_i'$ and $f_i$ define the action of the reduced $v$-analogue $B_v(\mf{g})$ of $\mf{g}$ on $K_{\Omega} \otimes_{\bb{Z}[v,v^{-1}]} \bb{Q}(v)$, and $K_{\Omega} \otimes_{\bb{Z}[v,v^{-1}]} \bb{Q}(v)$ is isomorphic to $U_v^-(\mf{g})$ as a $B_v(\mf{g})$-module. The involution induced by the Verdier duality functor coincides with the bar involution on $U_v^-(\mf{g})$.
\item The simple perverse sheaves in $\bigoplus_{\mb{V}}\ms{Q}_{\mb{V},\Omega}$ give the lower global basis of $U_v^-(\mf{g})$.
\enum
\end{thm}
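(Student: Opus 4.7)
The plan is to follow Lusztig's original geometric argument, which has four phases: (a) set up the flag-variety generators of $\mQ{V}{\Omega}$ and define $f_i, e'_i$ by correspondences, (b) verify the defining relations of $B_v(\mf g)$, (c) match Verdier duality with the bar involution, and (d) identify the simple perverse sheaves with the lower global basis.

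For (a), for each sequence $\mb i=(i_1,\ldots,i_m)$ with $\sum\alpha_{i_k}=\dim\mb V$, consider the flag variety $\tFl{V}{i}{\Omega}$ of pairs $(x, F_\scbul)$ where $x\in \EO{V}{\Omega}$ and $F_\scbul$ is an $x$-stable $I$-graded flag of type $\mb i$. The forgetful map $\pi_{\mb i}$ is proper, so the Decomposition Theorem makes $(\pi_{\mb i})_!\bb C[d_{\mb i}]$ semisimple, and $\mQ{V}{\Omega}$ is the full subcategory of $G_{\mb V}$-equivariant semisimple complexes whose simple summands (up to shift) appear among these pushforwards. The Grothendieck group $K_\Omega$ becomes a $\bb Z[v,v^{-1}]$-module by letting $v$ act as the cohomological shift $[1]$.

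For (b), define $f_i$ and $e'_i$ via the standard correspondence
\[
\EO{V'}{\Omega} \xleftarrow{p_1} E' \xrightarrow{p_2} E'' \xrightarrow{p_3} \EO{V}{\Omega},
\]
where $\dim\mb V'=\dim\mb V-\alpha_i$, $E''$ parametrizes pairs $(x,W)$ with $W\subset\mb V$ an $x$-stable $I$-graded subspace of codimension $\alpha_i$, and $E'$ is its framing bundle; with appropriate cohomological shifts, $f_i$ is pull-push along this diagram and $e'_i$ is its dual. These preserve $\mQ{V}{\Omega}$ because a flag of type $\mb i'$ on $W$ together with the quotient at $i$ yields a flag of type $(\mb i', i)$ on $\mb V$, so generators go to generators. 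The commutation $[e'_i, f_j]=\delta_{ij}$ follows from proper base change, while the quantum Serre relations reduce to an explicit cohomological computation on two-vertex subquivers. Combined with a dimension count against $U_v^-(\mf g)$, using that the vacuum class $\bb C_{\pt}\in K(\mQ{0}{\Omega})$ generates $K_\Omega$ under the $f_i$, this proves part (i) up to the bar-involution assertion.

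For (c), Verdier duality $\bb D$ is an involution on $\mQ{V}{\Omega}$ satisfying $\bb D\circ f_i=f_i\circ \bb D$ because the defining morphisms are proper and smooth of the correct relative dimension; together with $\bb D(1)=1$, this forces the induced involution on $K_\Omega$ to coincide with the bar involution of $U_v^-(\mf g)$. For (d), each simple perverse sheaf $L\in \mQ{V}{\Omega}$ is Verdier self-dual, hence bar-invariant in $K_\Omega$; moreover, by Hard Lefschetz, $L$ appears with multiplicity one in the top perverse degree of some distinguished $(\pi_{\mb i})_!\bb C[d_{\mb i}]$ and only in strictly higher cohomological degrees elsewhere, so the transition matrix from the monomial basis $\{f_{i_1}\cdots f_{i_m}\cdot 1\}$ to the basis of simple perverse sheaves is upper unitriangular with off-diagonal entries in $v\bb Z[v]$. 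By the Kashiwara-Lusztig characterization of the lower global basis, the simple perverse sheaves form it. The main obstacle is the quantum Serre relation: it is the only step requiring a genuine and delicate cohomological computation (rather than base change or formal properties of $\bb D$), and it is precisely where the assumption that the quiver is symmetric, i.e.\ that there are no multi-edges, is used in an essential way.
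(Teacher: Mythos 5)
This theorem is stated in the paper as a cited result of Lusztig, not proved there; the paper's only comment on its proof is the remark following Theorem~1.2 that ``Lusztig proved Theorem~\ref{Lthm1} using some inner product on $K_\Omega$,'' precisely to contrast with the present paper's inner-product-free strategy for the $\theta$-version. Your proposal takes a different route from Lusztig's, and it has a genuine gap at the crucial step. You propose to establish that $K_\Omega \otimes \bb{Q}(v)$ is a $B_v(\mf g)$-module by verifying the quantum Serre relations directly, asserting that they ``reduce to an explicit cohomological computation on two-vertex subquivers.'' That is not a proof, and it is not what Lusztig does. Lusztig never checks the Serre relations on $K_\Omega$ by hand; instead he constructs a symmetric bilinear form on $K_\Omega$ (the ``geometric'' pairing, pushing forward to a point) matching the form on $\Uf$, uses it to show that the natural surjection from the free $v$-boson-type algebra factors through $U_v^-(\mf g)$ (so the Serre relations hold automatically), and that the simple perverse sheaves form an almost orthonormal bar-invariant basis, which by the Kashiwara--Lusztig characterization is the lower global basis. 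If you want to carry out your plan without the inner product, you owe a precise proof of the Serre relation, and your sketch does not provide one.

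Two further errors. First, you write that the symmetry hypothesis on the Cartan matrix ``i.e.\ that there are no multi-edges'' is essential; that identification is wrong. Symmetric Cartan matrices are precisely those arising from quivers, \emph{with} multi-edges allowed (e.g.\ $A_1^{(1)}$ has two edges between its two vertices, and the paper even lists this as an example of a $\theta$-quiver). The non-symmetric case is the one excluded, handled by Lusztig via folding with a quiver automorphism, which the paper's Remark~1.3(i) discusses. Second, in step~(d) your appeal to ``Hard Lefschetz'' to get upper unitriangularity is misdirected: hard Lefschetz (part of the decomposition theorem) gives a symmetry of shifts around degree zero, not concentration in a ``top perverse degree.'' The actual argument uses the decomposition theorem plus a stratification/closure-order argument (comparing supports of summands) to obtain triangularity with off-diagonal entries in $v\bb{Z}[v]$, and then bar-invariance (Verdier self-duality) to conclude. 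Your conclusion is the right one, but the intermediate justification needs to be replaced.
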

\subsection{}
\ Recently in \cite{EK} and \cite{EK2} with M. Kashiwara, the author presented an analogue of the LLTA conjecture for the affine Hecke algebra of {\em type $B$}. In \cite{EK2}, we considered $U_v(\mf{g})$ and its Dynkin diagram involution $\theta$ and constructed an analogue $B_\theta(\mf{g})$ of the reduced $v$-analogue $B_v(\mf{g})$ (for the definition, see Definition \ref{def:Bt} below). We gave a $B_\theta(\mf{g})$-module $V_\theta(\lambda)$ for a dominant integral weight $\lambda$ such that $\theta(\lambda)=\lambda$, which is an analogue of the $B_v(\mf{g})$-module $U_v^-(\mf{g})$ (for the definition, see Definition \ref{prop:Vtheta} below). We defined the notion of symmetric crystals and conjectured the existence of the global basis. In the case $\mf{g}=\mf{gl}_\infty,$ $I=\bb{Z}_{\text{odd}}$, $\theta(i)=-i$ and $\lambda=0$, we constructed the PBW type basis and the lower (and upper) global basis parametrized by the $\theta$-restricted multi-segments. We conjectured that irreducible modules of the affine Hecke algebras of type $B$ are described by the global basis associated to the symmetric crystals. 
\subsection{}
\ In this paper, we construct the lower global basis for the symmetric crystals by using a geometry of quivers (with a Dynkin diagram involution). Hence for any symmetric quantized Kac-Moody algebra $U_v(\mf{g})$, we establish the existence of a crystal basis and a global basis for $V_\theta(0)$. \\
\quad We introduce the notion of $\theta$-quivers. This is a quiver $(I,\Omega)$  with an involution $\theta:I \to I$ (and $\theta:\Omega \to \Omega$) satisfing some conditions (see Definition \ref{def-tq}). This notion is partially motivated by Syu Kato's construction \cite{Kt} of the irreducible representations of the affine Hecke algebras of type $B$. \\
\quad We also introduce the $\theta$-symmetric $I$-graded vector spaces. This is an $I$-graded vector space $\mb{V}=(\mb{V}_i)_{i \in I}$ endowed with a non-degenerate symmetric bilinear form such that $\mb{V}_i$ and $\mb{V}_j$ are orthogonal if $j \neq \theta(i)$. For a $\theta$-quiver $(I,\Omega)$ and a $\theta$-symmetric $I$-graded vector space $\mb{V}$, we define the moduli space $\tEO{V}{\Omega}$ of representations of $(I,\Omega)$ adding a skew-symmetric condition on $\EO{V}{\Omega}$ with respect to the involution $\theta$. \\
\quad Similarly to Lusztig's arguments, we consider a certain full subcategory $\tmQ{V}{\Omega}$ of $\ms{D}(\tEO{V}{\Omega})$ and its Grothendieck group $\tK{V}{\Omega}$. We define the induction operators $F_i$ and the restriction operators $E_i$ on $\tKK{\Omega}:=\oplus_{\mb{V}}\tK{V}{\Omega}$ where $\mb{V}$ runs over the isomorphism classes of the $\theta$-symmetric $I$-graded vector spaces. We prove the following main theorem which is an analogous result of Theorem \ref{Lthm1}.
\begin{thm}[Theorem \ref{mt2}]\label{mthm2}
$\tKK{\Omega} \otimes_{\bb{Z}[v,v^{-1}]} \bb{Q}(v) \cong V_\theta(0)$ as $B_\theta(\mf{g})$-modules. The simple perverse sheaves in $\tKK{\Omega}$ give a lower global basis of $V_\theta(0)$.
\end{thm}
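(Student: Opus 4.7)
The plan is to follow Lusztig's strategy for Theorem~\ref{Lthm1} as faithfully as possible, inserting the modifications forced by the involution $\theta$ and by the non-degenerate symmetric bilinear form on $\mb{V}$. The group $G_{\mb{V}}=\prod_i GL(\mb{V}_i)$ acting on $\EO{V}{\Omega}$ is replaced by its subgroup preserving the form, which is a product of classical groups, and the induction and restriction functors are defined by the evident flag correspondence on $\tEO{V}{\Omega}$ whose middle term parametrizes $\theta$-compatible subspaces of $\mb{V}$.

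First I would verify that the operators $F_i$ and $E_i$ so defined induce a $B_\theta(\mf{g})$-action on $\tKK{\Omega}$. The quantum Serre relations among the $F_i$'s (and among the $E_i$'s) are local at each pair of nodes and follow from Lusztig's original computations, once one checks that the $\theta$-orbit structure on $I$ is compatible with the relevant stratifications. The genuinely new ingredient is the commutation of $E_i$ with $F_j$: in $B_\theta(\mf{g})$ this differs from Lusztig's $v$-commutation by an explicit correction when $j=\theta(i)$. Geometrically the correction must arise from a base-change computation on the fibre product of the flag correspondences defining $E_i$ and $F_j$, whose fibres decompose into a ``diagonal'' part reproducing the $\delta_{ij}$ term exactly as in Lusztig and an ``antidiagonal'' part coming from the pairing between $\mb{V}_i$ and $\mb{V}_{\theta(i)}$ forced by the bilinear form, yielding the $\delta_{j,\theta(i)}$ contribution. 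I expect this compatibility check to be the main obstacle: it is the place where the skew-symmetric equations cutting $\tEO{V}{\Omega}$ out of $\EO{V}{\Omega}$ have to do all the algebraic work, and extra care is needed in the fixed-point case $i=\theta(i)$.

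Once the $B_\theta(\mf{g})$-action is in hand, the constant sheaf on the one-point space corresponding to $\mb{V}=0$ is manifestly a cyclic vector for $\tKK{\Omega}$, and its annihilator coincides with that of the vacuum $\vac \in V_\theta(0)$ for trivial dimension reasons. This produces a canonical $B_\theta(\mf{g})$-module surjection $V_\theta(0) \twoheadrightarrow \tKK{\Omega} \otimes_{\bb{Z}[v,v^{-1}]} \bb{Q}(v)$. To upgrade it to an isomorphism I would construct a $\theta$-analogue of Lusztig's PBW basis by applying $F_i$-strings to the vacuum, indexed in the spirit of the $\theta$-restricted multisegments of \cite{EK2}, and compare its cardinality weight-space by weight-space with the known dimensions of $V_\theta(0)$.

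Finally, the identification of the simple perverse sheaves in $\tKK{\Omega}$ with the lower global basis of $V_\theta(0)$ would follow from Kashiwara's characterization: these sheaves are Verdier self-dual, hence bar-invariant; they span the $\bb{Z}[v,v^{-1}]$-form of $\tKK{\Omega}$ by the very definition of $\tmQ{V}{\Omega}$; and their images at $v=0$ reproduce the symmetric crystal of $V_\theta(0)$ from \cite{EK2}, by the standard perverse-degree argument matching the Kashiwara operators $\tei,\tfi$ with the leading-order behaviour of the geometric $E_i,F_i$.
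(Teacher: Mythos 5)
Your overall plan captures the right ingredients, but there are a few places where it would not go through as written and where the paper's actual argument is structurally different.

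First, the proposal to prove the module isomorphism by constructing a $\theta$-analogue of the PBW basis ``indexed in the spirit of the $\theta$-restricted multisegments'' and then comparing weight-space dimensions with ``the known dimensions of $V_\theta(0)$'' is a dead end for the theorem in the generality claimed. As the paper remarks in the introduction, the explicit crystal (and hence the graded dimension) of $V_\theta(0)$ is known only for $\mf{g}=\mf{gl}_\infty$; for a general symmetric Kac--Moody algebra there are no known dimensions to compare against. The paper instead sidesteps this entirely: it shows (Lemma~\ref{lem:eps2}) that $\set{u\in{}^\theta\!K}{E_iu=0 \text{ for all } i}=\bb{Z}[v,v^{-1}]\mb{1}_{\{\pt\}}$, checks $T_i\mb{1}_{\{\pt\}}=\mb{1}_{\{\pt\}}$, and then invokes the uniqueness part of Proposition~\ref{prop:Vtheta}(i). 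You don't need a surjection from $V_\theta(0)$ or any dimension count; you need to verify the characterizing properties (a)--(c) of Proposition~\ref{prop:Vtheta} directly for ${}^\theta\!K\otimes\bb{Q}(v)$. Note also that Proposition~\ref{prop:Vtheta} characterizes $V_\theta(\lambda)$ as the unique \emph{irreducible} module with those properties, not as a universal (Verma-type) object, so the arrow $V_\theta(0)\twoheadrightarrow{}^\theta\!K\otimes\bb{Q}(v)$ does not obviously exist a priori; the argument has to go the other way.

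Second, you wave at the $v$-Serre relations as ``local at each pair of nodes'' and ``follow from Lusztig's original computations.'' In the paper this is not a local computation at all. The Serre relations are established globally: one first shows the $E$-side Serre element $S_e$ kills the vacuum (trivially), then deduces $S_e=0$ on all of ${}^\theta\!K$ because ${}^\theta\!K$ is generated from the vacuum by the $F_i^{(a)}$'s (Proposition~\ref{prop-gen}) and $S_e$ commutes with them. The $F$-side element $S_f$ is then killed by an induction on weight using Lemma~\ref{lem:eps2} to conclude $S_f(L)=0$ from $E_kS_f(L)=0$. This argument depends on the generation statement and on the precise characterization of the kernel of the $E_i$'s; it is not a reduction to rank-two computations.

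Third, your framing ``follow Lusztig's strategy as faithfully as possible'' is at odds with the paper's actual route. The paper states explicitly that it does \emph{not} reproduce Lusztig's inner-product argument; instead it proves the estimates of Theorem~\ref{est} (coefficients of $E_i(K)$, $F_i(L)$ on simple perverse sheaves, stratified by $\varepsilon_i$), verifies the hypotheses of the crystal criterion Theorem~\ref{cricry}, and combines this with Verdier self-duality (Proposition~\ref{prop-V}) to get the balanced triple. Your last paragraph does gesture at this (``standard perverse-degree argument''), but that single sentence is concealing the main technical content of the proof, and the opening framing suggests a different (and, for this setting, unavailable) inner-product approach.

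A minor point: you worry about the ``fixed-point case $i=\theta(i)$''; this case is excluded by hypothesis (condition (d) of Definition~\ref{def-tq} and the standing assumption before Conjecture~\ref{conj:crystal}), so no extra care is needed there.
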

Though Lusztig proved Theorem \ref{Lthm1} using some inner product on $K_{\Omega}$, we prove Theorem \ref{mthm2} using a criterion of crystals (Theorem \ref{cricry}) and certain estimates for the actions of $E_i$ and $F_i$ on simple perverse sheaves (Theorem \ref{est}).\\
\quad This paper is organized as follows. \\
\quad In section 2, we recall some results on the quantum enveloping algebras, the theory of the crystal bases and the global bases, the notion of symmetric crystals and known results of perverse sheaves and Fourier-Sato-Deligne transforms. Especially, we recall an important criterion of crystals in Theorem \ref{cricry}. We use this in our proof of existence of the crystal basis of $V_\theta(0)$. \\
\quad In section 3, we give a quick review on Lusztig's construction of $U_v^-(\mf{g})$ and its lower global basis. \\
\quad In section 4, we introduce the notion of $\theta$-quivers and $\theta$-symmetric $I$-graded vector spaces. We define the category $\tmQ{V}{\Omega}$ and the induction operators $F_i$ and the restriction operators $E_i$. We calculate actions of $E_i$ and $F_i$ on $\tmQ{V}{\Omega}$. We also prove that $E_i$ and $F_i$ commute with the Fourier-Sato-Deligne transforms. \\
\quad In section 5, we introduce the Grothendieck group $\tKK{\Omega}$ and show three key results. First, we calculate the commutation relations of $E_i$ and $F_i$. Second, we give certain estimates of coefficients with respect to the action of $E_i$ and $F_i$ on simple perverse sheaves. These estimates satisfy the condition in Theorem \ref{cricry}. Third, we prove the invariance of simple perverse sheaves with respect to the Verdier duality functor. Combining these results we prove the main theorem. 

\begin{rem}
We give two remarks on a difference from the "folding" procedure and an overlap with perverse sheaves arising from graded Lie algebras by Lusztig.
\begin{enumerate}[(i)]
\item Our construction is completely different from Lusztig's construction, "Quiver with automorphisms", in his book \cite[Chapter.12-14]{Lus3}. \\
\quad He considered actions $a:I \to I$ and $a:H \to H$ induced from a finite cyclic group $C$ generated by $a$. Put an orientation $\Omega$ such that $\vo(a(h))=a(\vo(h))$ and $\vi(a(h))=a(\vi(h))$. He said this orientation "compatible". Let $\mathcal{V}^a$ be the category of $I$-graded vector spaces $\mb{V}$ such that $\dim\mb{V}_i=\dim\mb{V}_{a(i)}$ for any $i \in I$. For $\mb{V} \in \mathcal{V}^a$, $a$ induces a natural automorphism on $\EO{V}{\Omega}$ and a functor $a^*:\ms{D}(\EO{V}{\Omega}) \to \ms{D}(\EO{V}{\Omega})$. He introduced "$C$-equivariant" simple perverse sheaves $(B,\phi)$, where $B$ is a perverse sheaf on $\EO{V}{\Omega}$ and $\phi:a^*B \cong B$. Then he proved that the set $\sqcup_{\mb{V} \in \mathcal{V}^a}\mb{B}_{\mb{V},\Omega}$ of $C$-equivariant perverse sheaves gives a lower global basis of $\mb{U}_v^-(\mf{g})$. Here $\mf{g}$ has a non-symmetric Cartan matrix which is obtained by the "folding" procedure with respect to the $C$-action on $I$.\\
\quad But in our construction, a $\theta$-orientation is not a compatible orientation. Moreover the most essential difference is that his construction has no skew-symmetric condition in our sence. Hence the set of simple perverse sheaves ${}^\theta\!\ms{P}_{\mb{V},\Omega}$ and the space ${}^\theta\!K_{\Omega} \otimes_{\bb{Z}[v,v^{-1}]}\bb{Q}(v) \cong V_\theta(0)$ are different from $\mb{B}_{\mb{V},\Omega}$ and $\mb{U}_v^-(\mf{g})$, respectively. The explicit crystal structure of $V_\theta(0)$ is unknown except for the case $\mf{g}=\mf{gl}_\infty$, $I=\bb{Z}_{\rm{odd}}$ and $\theta(i)=-i$ in \cite{EK2}.
\item In some special case, the lower global basis constructed in this paper is obtained by Lusztig (\cite{Lus4} and \cite{Lus5}). Let us consider the case $G=SO(2n,\bb{C})$. Let $\mf{g}$ be the Lie algebra of $G$ and $T$ a fixed maximal torus of $G$. Set $\varepsilon_{2i-1} \ (1 \le i \le n)$ the fundamental characters of $T$. Asuume $q \in \bb{C}^*$ is not a root of unity. We choose a semisimple element $s \in T$ such that $\varepsilon_{2i-1}(s) \in q^{\bb{Z}_{\rm{odd},\ge 0}}$ for any $i$ and put $d_{2i-1}=\{j|\varepsilon_{2j-1}(s)=q^{2i-1}\}$. Then the centralizer $G(s)$ of $s$ acts on
\[
\mf{g}_2:=\{X \in \mf{g} \ | \ sXs^{-1}=q^2X\}
\]
which has finitely many $G(s)$-oribits. Lusztig considered the category $\ms{Q}(\mf{g}_2)$ of semisimple $G(s)$-equivariant complex on $\mf{g}_2$ and constructed the canonical basis $\mb{B}(\mf{g}_2)$ of $K(\mf{g}_2)$ which is the Grothendieck group of $\ms{Q}(\mf{g}_2)$. \\
\quad On the other hand, let us consider the $\theta$-symmetric vector space $\mb{V}$ such that $\wt(\mb{V})=\sum_{i=1}^{n}d_{2i-1}(\alpha_{2i-1}+\alpha_{-2i+1})$ and the following $\theta$-quiver of type $A_{2n}$ and the $\theta$-orientation $\Omega$:
\[
\xymatrix@R=.8ex@C=3ex{
\circ\ar[r]\ar@/^2.2pc/@{<->}[rrrrrrrrr]^\theta&\ar[r]\cdots &\circ\ar[r]
\ar@/^1.8pc/@{<->}[rrrrr]&\circ\ar[r]\ar@/^1.2pc/@{<->}[rrr]&\circ\ar[r]
\ar@/^.7pc/@{<->}[r]&
\circ\ar[r]&\circ\ar[r]&\circ\ar[r]&\ar[r]\cdots&\circ \\
-2n+1& &-5&-3&-1&\;1\;&\;3\;&\;5\;&&2n-1\; 
}
\]
\quad In this case, we have $G(s)=\prod_{i=1}^{n}GL(d_{2i-1})=\tG{V}$ and $\mf{g}_2\cong\tEO{V}{\Omega}$. Thus the set ${}^\theta\!\ms{P}_{\mb{V},\Omega}$ of simple perverse sheaves conincide with $\mb{B}(\mf{g}_2)$.
\end{enumerate}

\end{rem}

\textbf{Acknowledgements.} \ I would like to thank Masaki Kashiwara, Georg Lusztig, Susumu Ariki, Syu Kato and Yuichiro Hoshi. Masaki Kashiwara guided me to Lusztig's geometric theory and gave me many advises, comments and patient encouragement. George Lusztig pointed out the relations between his work and this paper.  Syu Kato also commented on this paper and guided me to his geometric representation theory of AHA of type $C_n$ and certain quiver presentations. Susumu Ariki gave me some comments and encouragement. Yuichiro Hoshi taught me basic concepts, examples and some techniques in algebraic geometry and derived categories till midnights at RIMS.

\section{Preliminaries}
\subsection{Quantum enveloping algebras}\label{sec:qe}
\subsubsection{Quantum enveloping algebras and reduced $v$-analogue}
We shall recall the quantized universal enveloping algebra
$U_v(\mf{g})$. In this paper, we treat only the symmetric Cartan matrix case. 
Let $I$ be an index set (for simple roots), and $Q$ the free $\bb{Z}$-module with a basis $\{\alpha_i\}_{i\in I}$. Let $( \scbul,\scbul):Q\times Q\to\bb{Z}$ be
a symmetric bilinear form such that $(\al_i,\al_i)=2$ and $(\al_i,\al_j)\in\bb{Z}_{\le0}$ for $i\not=j$.
Let $v$ be an indeterminate and set 
$\mb{K}\seteq\bb{Q}(v)$.
We define its subrings $\mb{A}_0$, $\mb{A}_\infty$ and $\mb{A}$ as follows.
\begin{eqnarray*}
\mb{A}_0&=&\set{f\in\mb{K}}{\text{$f$ is regular at $v=0$}},\\
\mb{A}_\infty&=&\set{f\in\mb{K}}%
{\text{$f$ is regular at $v=\infty$}},\\
\mb{A}&=&\bb{Q}[v,v^{-1}].
\end{eqnarray*}
\begin{dfn}\label{U_v(g)}
The quantized universal enveloping algebra $U_v(\mf{g})$ is the $\mb{K}$-algebra
generated by elements $e_i,f_i$ and invertible
elements $t_i\ (i\in I)$
with the following defining relations.
\begin{enumerate}[{\rm(1)}]
\item The $t_i$'s commute with each other.
\item
$t_je_i\,t_j^{-1}=v^{(\al_j,\al_i)}\,e_i\ $ and $\ t_jf_it_j^{-1}=v^{-(\al_j,\al_i)}f_i\ $ for any $i,j\in I$.
\item\label{even} $\lbrack e_i,f_j\rbrack
=\delta_{ij}\dfrac{t_i-t_i^{-1}}{v-v^{-1}}$
for $i$, $j\in I$. 
\item {\rm(}{\em $v$-Serre relation}{\rm)} For $i\not= j$,
\begin{eqnarray*}
\sum^b_{k=0}(-1)^ke^{(k)}_ie_je^{(b-k)}_i=0, \ 
\sum^b_{k=0}(-1)^kf^{(k)}_i
f_jf_i^{(b-k)}=0.
\end{eqnarray*}
Here $b=1-(\al_i,\al_j)$ and
\begin{eqnarray*}
\ba{l}
e^{(k)}_i=e^k_i/\lbrack k\rbrack_v!\,,\; f^{(k)}_i=f^k_i/\lbrack k
\rbrack_v!\ , \ \lbrack k\rbrack_v=(v^k-v^{-k})/(v-v^{-1})\,, \ 
\lbrack k\rbrack_v!=\lbrack 1\rbrack_v\cdots \lbrack k\rbrack_v\,.
\ea
\end{eqnarray*}
\end{enumerate}
\end{dfn}
Let us denote by $\Uf$ the subalgebra of $\U$ generated by the $f_i$'s.

Let $e'_i$ and $e^*_i$ be the operators on $\Uf$ defined by
$$[e_i,a]=\dfrac{(e^*_ia)t_i-t_i^{-1}e'_ia}{v-v^{-1}}\quad(a\in\Uf).$$
These operators satisfy the following formulas similar to derivations:
\begin{eqnarray*}
e_i'(ab)=(e_i'a)b+(\Ad(t_i)a)e_i'b.
\end{eqnarray*}
The algebra $\Uf$ has a unique symmetric bilinear form $(\scbul,\scbul)$
such that $(1,1)=1$ and
\[
(e'_ia,b)=(a,f_ib)\quad\text{for any $a,b\in\Uf$.}
\]
It is non-degenerate.
The left multiplication operator $f_j$ and $e'_i$ 
satisfy the commutation relations
\[
e'_if_j=v^{-(\al_i,\al_j)}f_je'_i+\delta_{ij}, \ 
e_i^*f_j=f_je_i^*+\delta_{ij}\Ad(t_i),
\]
and the $e_i'$'s satisfy the $v$-Serre relations.

\begin{dfn}
The reduced $v$-analogue $B_v(\mf{g})$ of $\mf{g}$ is the $\bb{Q}(v)$-algebra generated by $e_i'$ and $f_i$. 
\end{dfn}

\subsubsection{Review on crystal bases and global bases of $U_v^-$}
Since $e_i'$ and $f_i$ satisfy the $v$-boson relation, any element $a \in U_v^{-}(\mf{g})$ can be uniquely  written as
\[
a=\smash{\sum_{n \ge 0}}f_i^{(n)}a_n \quad \text{with} \ e_i'a_n=0.
\]
Here $f_i^{(n)}=\dfrac{f_i^n}{[n]_v!}$. 
\begin{dfn}
We define the modified root operators $\widetilde{e}_i$ and $\widetilde{f}_i$ on $U_v^{-}(\mf{g})$ by 
\[
\widetilde{e}_ia=\sum_{n \ge 1}f_i^{(n-1)}a_n, \quad \widetilde{f}_ia=\sum_{n \ge 0}f_i^{(n+1)}a_n.
\] 
\end{dfn}
\begin{thm}[\cite{K}]
We define
\begin{eqnarray*}
L(\infty)&=&\sum_{\ell \ge 0,\;i_1, \ldots ,i_{\ell} \in I}
\mb{A}_0\tf_{i_1} \cdots \tf_{i_\ell} \cdot 1
 \subset U_v^{-}(\mf{g}), \\
B(\infty)&=&\set{\tf_{i_1} \cdots \tf_{i_\ell} \cdot 1\; \mod vL(\infty)}
{\ell \ge 0,i_1, \cdots ,i_{\ell} \in I} \subset L(\infty)/vL(\infty).
\end{eqnarray*}
Then we have
\bnum
\item $\widetilde{e}_iL(\infty) \subset L(\infty)$ 
and $\widetilde{f}_{i}L(\infty) \subset L(\infty)$, 
\item $B(\infty)$ is a basis of $L(\infty)/vL(\infty)$,
\item
$\widetilde{f}_iB(\infty) \subset B(\infty)$ 
and $\widetilde{e}_iB(\infty) \subset B(\infty) \cup \{0\}$. 
\enum
We call $(L(\infty),B(\infty))$ the crystal basis of $U_v^{-}(\mf{g})$.
\end{thm}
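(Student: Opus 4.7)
The plan is to follow Kashiwara's \emph{grand loop} argument, in which a bundle of mutually dependent statements is proved by a single induction on the total weight rather than attacking (i)--(iii) one at a time. The combined package will contain, in addition to (i)--(iii), certain auxiliary rank-one lattices (one for each $i\in I$) together with their compatibility with the symmetric bilinear form $(\scbul,\scbul)$ on $\Uf$.

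The initial reduction comes from the $v$-boson decomposition recalled above: for each $i\in I$ every $a\in\Uf$ admits a unique expression $a=\sum_{n\ge 0}f_i^{(n)}a_n$ with $e_i'a_n=0$, and by construction $\tei a=\sum_{n\ge 1}f_i^{(n-1)}a_n$ and $\tfi a=\sum_{n\ge 0}f_i^{(n+1)}a_n$. The rank-one case $\mf{g}=\mf{sl}_2$ is immediate: $\Uf$ is the polynomial ring on a single $f$, $L(\infty)=\bigoplus_{n\ge 0}\mb{A}_0 f^{(n)}$, and $\tei,\tfi$ are the evident shifts of the index $n$.

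For higher rank, I would introduce for each $i$ the local sublattice
\[
L_i(\infty)=\sum_{n\ge 0}\tfi^{n}\bigl(\mathrm{Ker}\,e_i'\cap L(\infty)\bigr),
\]
and prove by induction on $|\wt(a)|$ the simultaneous statements: $L_i(\infty)=L(\infty)$ for every $i$; $\tei L(\infty)\subset L(\infty)$ and $\tfi L(\infty)\subset L(\infty)$; and the monomials $\tf_{i_1}\cdots\tf_{i_\ell}\cdot 1$ descend to a basis of $L(\infty)/vL(\infty)$ on which $\tf_i$ acts injectively and $\tei$ as its one-sided inverse (zero on $1$). The propagation from weight $\beta$ to weight $\beta+\al_i$ is driven by the adjunction $(e_i'a,b)=(a,f_ib)$, which converts estimates on $\mathrm{Ker}\,e_i'$ into estimates on the larger space via non-degeneracy of the form.

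The main obstacle is the entanglement across different nodes $i\neq j$: a priori the lattices $L_i(\infty)$ and $L_j(\infty)$ could differ, and the interaction of $\tei$ with $\tf_j$ is governed only by the $v$-Serre relations of $B_v(\mf{g})$. Showing that these local lattices coincide with $L(\infty)$ and are simultaneously stable under every $\tei,\tfi$ at the next weight level is the technical heart of the grand loop, and it is precisely here that the inner product control set up in the initial package must be fed back into the induction hypothesis. Once this fixed point is reached, assertion (ii) follows from the linear independence supplied by the inner product, and (iii) is a direct consequence of the definitions at the level of the $v$-boson decomposition.
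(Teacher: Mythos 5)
The paper states this result as a citation to Kashiwara \cite{K} and does not prove it, so the relevant comparison is with Kashiwara's original grand-loop argument, which you correctly identify as the right strategy: a single induction on weight carrying a bundle of interlinked statements, with the rank-one $\mf{sl}_2$ case and the adjunction $(e_i'a,b)=(a,f_ib)$ as you describe. That much is on target.

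However, there is a genuine gap in your reconstruction: Kashiwara's grand loop does not run on $U_v^-(\mf{g})$ alone. The induction package simultaneously contains crystal-lattice statements for the integrable highest weight modules $V(\lambda)$ for all dominant $\lambda$, together with the compatibility of the projection $\pi_\lambda\colon U_v^-\twoheadrightarrow V(\lambda)$ with the Kashiwara operators and with the two contravariant forms, and this is indispensable. The point is precisely the "entanglement across different nodes" you flag as the technical heart: the only effective control on how $\tei$ interacts with $\tf_j$ for $j\neq i$ in the next weight layer comes from pushing down to $V(\lambda)$ for $\lambda$ so dominant that $\pi_\lambda$ is injective in the weights under consideration, and then using the local $U_v(\mf{sl}_2)_j$-module structure of $V(\lambda)$ (complete reducibility into finite-dimensional strings), which has no analogue in $U_v^-$ itself. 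The bilinear form on $U_v^-$ by itself is not enough to close the loop; one needs the back-and-forth between $B(\infty)$ and $B(\lambda)$. Your auxiliary lattices $L_i(\infty)$ and the inner-product estimates are genuine pieces of the package, but without the $V(\lambda)$ layer the induction step from $\beta$ to $\beta+\al_i$ does not go through for $j\neq i$, and assertion (ii) in particular cannot be extracted from linear independence over the form alone since the integrality $(L(\infty),L(\infty))\subset\mb{A}_0$ is itself one of the statements that must be carried along and is proved via $V(\lambda)$.
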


\begin{dfn}
We define $\varepsilon_i(b):=\max\{m \in \bb{Z}_{\ge 0}|\widetilde{e}_i^mb \neq 0\}$ for $i \in I$ and $b \in B(\infty)$.
\end{dfn}

Let $-$ be the automorphism of $\mb{K}$ sending $v$ to $v^{-1}$.
Then $\ol{\mb{A}_0}$ coincides with $\mb{A}_\infty$. 

Let $V$ be a vector space over $\mb{K}$,
$L_0$ an $\mb{A}$-submodule of $V$,
$L_\infty$ an $\mb{A}_\infty$- submodule, and
$V_\mb{A}$ an $\mb{A}$-submodule.
Set $E\seteq L_0\cap L_\infty\cap V_\mb{A}$.

\begin{dfn}[\cite{K}]
We say that $(L_0,L_\infty,V_\mb{A})$ is {\em balanced}
if each of $L_0$, $L_\infty$ and $V_\mb{A}$
generates $V$ as a $\mb{K}$-vector space,
and if one of the following equivalent conditions is satisfied.
\bnum
\item
$E \to L_0/v L_0$ is an isomorphism,
\item
$E \to L_\infty/v^{-1}L_\infty$ is an isomorphism,
\item
$(L_0\cap V_\mb{A})\oplus
(v^{-1} L_\infty \cap V_\mb{A}) \to V_\mb{A}$is an isomorphism.
\item
$\mb{A}_0\otimes_\bb{Q} E \to L_0$, $\mb{A}_\infty\otimes_\bb{Q} E \to L_\infty$,$\mb{A}\otimes_\bb{Q} E \to V_\mb{A}$ and $\mb{K} \otimes_\bb{Q} E \to V$ are isomorphisms.
\enum
\end{dfn}

Let $-$ be the ring automorphism of $\U$ sending
$v$, $t_i$, $e_i$, $f_i$ to $v^{-1}$, $t_i^{-1}$, $e_i$, $f_i$.

Let $\U_\mb{A}$ be the $\mb{A}$-subalgebra of
$\U$ generated by $e_i^{(n)}$, $f_i^{(n)}$
and $t_i$.
Similarly we define
$\Uf_\mb{A}$.

\begin{thm}
$(L(\infty),L(\infty)^-,\Uf_\mb{A})$ is balanced.
\end{thm}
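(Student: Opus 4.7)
The plan is to construct, for each $b \in B(\infty)$, an element $G(b) \in E := L(\infty) \cap L(\infty)^- \cap \Uf_{\mb{A}}$ satisfying $G(b) \equiv b \pmod{vL(\infty)}$. Once this is achieved, balance follows cleanly: since $L(\infty)$ is a graded $\mb{A}_0$-module with finitely generated weight pieces, Nakayama's lemma shows $\{G(b)\}_{b \in B(\infty)}$ is an $\mb{A}_0$-basis of $L(\infty)$; applying the bar involution, it is also an $\mb{A}_\infty$-basis of $L(\infty)^-$. Any $x \in L(\infty) \cap L(\infty)^-$ then has coefficients in $\mb{A}_0 \cap \mb{A}_\infty = \mb{Q}$, so $E$ is the $\mb{Q}$-span of $\{G(b)\}$ and the map $E \to L(\infty)/vL(\infty)$ is an isomorphism, which is condition~(i) of balance.

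I would produce $G(b)$ by induction on $\wt(b) \in -Q_+$, with base case $G(1) := 1 \in E$. For $b$ with $\wt(b) \neq 0$, pick $i \in I$ with $n := \varepsilon_i(b) > 0$ and set $b_0 := \tei^n b$, so that $\tei b_0 = 0$ in $B(\infty)$ and $b = \tfi^n b_0$. Since $\wt(b_0) > \wt(b)$, by induction $G(b_0) \in E$ is available, and I set
\[
G(b) \;:=\; f_i^{(n)}\, G(b_0) \;\in\; \Uf_{\mb{A}}.
\]
Both factors are bar-invariant, so $\overline{G(b)} = G(b)$; stability of $L(\infty)$ under the $f_i$'s gives $G(b) \in L(\infty)$; and bar-invariance combined with $G(b) \in L(\infty)$ forces $G(b) \in L(\infty)^-$ as well. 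To verify $G(b) \equiv b \pmod{vL(\infty)}$, I write the $v$-boson expansion $G(b_0) = \sum_k f_i^{(k)} c_k$ with $e_i' c_k = 0$: the hypothesis $\tei b_0 = 0$ forces $c_k \in vL(\infty)$ for $k \geq 1$ and $c_0 \equiv b_0 \pmod{vL(\infty)}$, so $G(b) \equiv f_i^{(n)} c_0 = \tfi^n c_0 \equiv \tfi^n b_0 = b \pmod{vL(\infty)}$.

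The main obstacle is not the inductive step itself, which is essentially formal once one trusts the crystal basis machinery, but rather the underlying lattice-preservation and $v$-boson decomposition properties: that $L(\infty)$ is stable under $f_i^{(n)}$, that the boson decomposition lifts to $L(\infty)$ (each $c_k \in L(\infty)$ whenever $\sum f_i^{(k)} c_k \in L(\infty)$), and that the induced operators on $L(\infty)/vL(\infty)$ coincide with $\tei$ and $\tfi$. These facts are precisely the content of Kashiwara's crystal basis theorem recalled above, and they sit behind the otherwise clean algebraic induction that produces the lower global basis.
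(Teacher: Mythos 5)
The paper itself does not prove this statement; it is recalled as a known theorem (Kashiwara's existence of the lower global basis of $\Uf$, proved in \cite{K2} by the ``grand loop'' argument, and independently by Lusztig by geometric means). So there is no internal proof to compare against; I assess your proposal on its own terms.

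There is a fatal gap at the very foundation of the induction. You set $G(b) := f_i^{(n)} G(b_0)$ and justify $G(b)\in L(\infty)$ by ``stability of $L(\infty)$ under the $f_i$'s'' (and, later, under $f_i^{(n)}$). But $L(\infty)$ is stable under the \emph{modified} operators $\tfi$, not under $f_i$ or $f_i^{(n)}$, and indeed this fails already for $\mf{g}=\mf{sl}_2$: there $L(\infty)=\bigoplus_n \mb{A}_0 f^{(n)}$, and $f\cdot f^{(n)} = [n+1]_v f^{(n+1)}$ with $[n+1]_v = v^{-n}+v^{-n+2}+\cdots+v^{n} \notin \mb{A}_0$ for $n\ge 1$. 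Quantitatively, even granting the string-decomposition lemma ($G(b_0)=\sum_k f_i^{(k)}c_k$ with $e'_ic_k=0$, $c_k\in L(\infty)$, and $c_k\in vL(\infty)$ for $k\ge 1$), one computes
\[
G(b) \;=\; f_i^{(n)} G(b_0) \;=\; \sum_{k\ge 0}\left[\begin{smallmatrix} n+k\\ n\end{smallmatrix}\right]_v f_i^{(n+k)} c_k,
\]
and the Gaussian binomial $\left[\begin{smallmatrix} n+k\\ n\end{smallmatrix}\right]_v$ has a pole of order $nk$ at $v=0$. Thus the $k\ge 1$ terms lie only in $v^{1-nk}L(\infty)$, which escapes $L(\infty)$ as soon as $nk\ge 2$. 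So the recursively defined $G(b)$ need not lie in $L(\infty)$, and the congruence $G(b)\equiv b \pmod{vL(\infty)}$ is not established. This is precisely why the lower global basis is not a family of monomials in the $f_i^{(n)}$'s in general and why the actual proof is hard (Kashiwara's grand loop proves balancedness together with a cascade of auxiliary estimates, and does not proceed by naive monomial induction).

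Two secondary issues. First, $\mb{A}_0 \cap \mb{A}_\infty \ne \bb{Q}$ (e.g. $1/(1-v)$ is regular at $0$ and $\infty$); you need the third lattice $\Uf_\mb{A}$, so you must also prove that $\{G(b)\}$ is an $\mb{A}$-basis of $\Uf_\mb{A}$. You only show $G(b)\in\Uf_\mb{A}$, which gives containment, not spanning over $\mb{A}$; this is itself a nontrivial part of the theorem. Second, the well-definedness of the inductive choice of $i$ is unproblematic if you only want \emph{some} basis of $E$, but you should say so rather than leaving the impression that the construction yields the canonical $G^{\mathrm{low}}(b)$.
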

Let 
\[
G^{\text{low}}\cl L(\infty)/v L(\infty)\isoto 
E\seteq L(\infty)\cap L(\infty)^-
\cap \Uf_\mb{A}
\] 
be the inverse of $E\isoto L(\infty)/v L(\infty)$.
Then $\set{G^{\text{low}}(b)}{b\in B(\infty)}$ forms a basis of $\Uf$.
We call it a (lower) {\em global basis}.
It is first introduced by G.\ Lusztig (\cite{Lus1})
under the name of ``canonical basis'' for the A, D, E cases.

\begin{dfn}
Let
\[
\set{G^\upper(b)}{b \in B(\infty)}
\]
be the dual basis of $\set{G^{\text{low}}(b)}{b \in B(\infty)}$ 
with respect to the inner product $( \scbul,\scbul)$. 
We call it the upper global basis of $U_v^{-}(\mf{g})$.
\end{dfn}

\subsection{Symmetric Crystals}
Let $\theta$ be an automorphism of
$I$ such that $\theta^2=\id$ and 
$(\al_{\theta(i)},\al_{\theta(j)})=(\al_i,\al_j)$.
Hence it extends to an automorphism of the root lattice $Q$
by $\theta(\al_i)=\al_{\theta(i)}$,
and induces an automorphism of $\U$.

\begin{dfn}\label{def:Bt}
Let $B_\theta(\mf{g})$ be the $\mb{K}$-algebra
generated by $E_i$, $F_i$, and
invertible elements $T_i (i\in I)$ satisfying the following defining relations:
\begin{enumerate}[{\rm(i)}]
\item the $T_i$'s commute with each other,
\item
$T_{\theta(i)}=T_i$ for any $i$,
\item
$T_iE_jT_i^{-1}=v^{(\al_i+\al_{\theta(i)},\al_j)}E_j$ and
$T_iF_jT_i^{-1}=v^{(\al_i+\al_{\theta(i)},-\al_j)}F_j$
for $i,j\in I$,
\item
$E_iF_j=v^{-(\al_i,\al_j)}F_jE_i+
(\delta_{i,j}+\delta_{\theta(i),j}T_i)$
for $i,j\in I$,
\item
the $E_i$'s and the $F_i$'s satisfy the $v$-Serre relations.
\end{enumerate}
\end{dfn}

We set $F_i^{(n)}=F_i^n/[n]_v!$.

\begin{prop}[{\cite[Proposition2.11.]{EK2}}]\label{prop:Vtheta}
Let 
\[
\lambda \in P_+\seteq\set{\lambda \in \Hom(Q,\bb{Q})}{
\text{$\lambda(\alpha_i) \in \bb{Z}_{\ge0}$ for any $i\in I$}}
\]
be a dominant integral weight such that
$\theta(\lambda)=\lambda$.
\begin{enumerate}[{\rm(i)}]
\item
There exists a $B_\theta(\mf{g})$-module $V_\theta(\lambda)$
generated by a non-zero vector $\vac_\lambda$ such that
\begin{enumerate}[{\rm(a)}]
\item $E_i\vac_\lambda=0$ for any $i\in I$,
\item $T_i\vac_\lambda=v^{(\al_i,\lambda)}\vac_\lambda$ for any $i\in I$,
\item $\set{u\in V_\theta(\lambda)}{\text{$E_iu=0$ for any $i\in I$}}
=\mb{K}\vac_\lambda$.
\end{enumerate}
Moreover such a $V_\theta(\lambda)$ is irreducible and
unique up to an isomorphism.
\item There exists a unique non-degenerate symmetric bilinear form $(\scbul,\scbul)$
on $V_\theta(\lambda)$ such that $(\vac_\lambda,\vac_\lambda)=1$ and
$(E_iu,v)=(u,F_iv)$ for any $i\in I$ and $u,v\in V_\theta(\lambda)$.
\item There exists an endomorphism $-$ of $V_\theta(\lambda)$
such that $\ol{\vac_\lambda}=\vac_\lambda$ and
$\ol{av}=\bar{a}\bar{v}$, $\ol{F_iv}=F_i\bar{v}$ for any $a\in \mb{K}$
and $v\in V_\theta(\lambda)$.
\end{enumerate}
\end{prop}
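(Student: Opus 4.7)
The plan is to construct $V_\theta(\lambda)$ as the irreducible quotient of a Verma-type module equipped with a non-degenerate invariant bilinear form, mimicking the classical construction of $V(\lambda)$ for $\U$ but adapted to the boson-like relations of $B_\theta(\mf{g})$. First I would form the universal highest-weight module $M_\theta(\lambda) \seteq B_\theta(\mf{g})/J_\lambda$, where $J_\lambda$ is the left ideal generated by $\{E_i\}_{i\in I}$ together with $\{T_i - v^{(\al_i,\lambda)}\}_{i\in I}$. The hypothesis $\theta(\lambda)=\lambda$ is exactly what is needed so that the relation $T_{\theta(i)} = T_i$ remains consistent with these prescribed eigenvalues on $\vac_\lambda$. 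Using Definition \ref{def:Bt}(iv), namely
\[
E_iF_j = v^{-(\al_i,\al_j)}F_jE_i + (\delta_{i,j} + \delta_{\theta(i),j}T_i),
\]
one moves every $E_j$ past any string $F_{i_1}\cdots F_{i_n}$ by induction, producing a sum of terms each ending in some $E_{j'}$ or $(T_i - v^{(\al_i,\lambda)})$ applied to $\vac_\lambda$. Consequently $M_\theta(\lambda)$ is spanned by monomials $F_{i_1}\cdots F_{i_n}\vac_\lambda$, giving a triangular decomposition relative to the subalgebra generated by the $F_i$'s.

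Next I would construct the bilinear form on $M_\theta(\lambda)$ characterized by $(\vac_\lambda,\vac_\lambda)=1$ and $(E_iu, v) = (u, F_iv)$, in parallel with Kashiwara's construction of the form on $\Uf$ via the operators $e'_i$. One defines the pairing inductively on total $F$-degree by the adjunction rule and verifies that the $v$-Serre relations among the $F_i$ lie in its radical; this reduces to checking identities on length-$b$ monomials via Definition \ref{def:Bt}(iv)--(v), where the extra $\delta_{\theta(i),j}T_i$ term produces a controlled scalar contribution from the $T_i$-eigenvalue on $\vac_\lambda$. Symmetry is verified in parallel by induction on degree. The radical $R$ of the form is automatically a $B_\theta(\mf{g})$-submodule, so $V_\theta(\lambda) \seteq M_\theta(\lambda)/R$ inherits a non-degenerate form, yielding (i)(a), (i)(b) and (ii). Property (c) and irreducibility then follow: if $u\neq 0$ is killed by all $E_i$, non-degeneracy gives a monomial $F_{i_1}\cdots F_{i_n}\vac_\lambda$ with $(u,F_{i_1}\cdots F_{i_n}\vac_\lambda) \neq 0$, and iterated application of $(E_iu, v) = (u, F_iv)$ collapses this to a scalar multiple of $(\vac_\lambda,\vac_\lambda)$, forcing $n=0$ and hence $u \in \mb{K}\vac_\lambda$. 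Uniqueness is then clear because any module satisfying (a), (b), (c) is a quotient of $M_\theta(\lambda)$ whose kernel must contain $R$ by the standard pairing argument.

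For the bar involution (iii), define $\bar{\cdot}$ on $M_\theta(\lambda)$ by $\ol{\vac_\lambda} = \vac_\lambda$, $\ol{av} = \bar a \bar v$, and $\ol{F_iv} = F_i\bar v$. Well-definedness reduces to bar-stability of $J_\lambda$: the $E_i$ are bar-invariant, and the identity $T_i^{-1} - v^{-(\al_i,\lambda)} = -v^{-(\al_i,\lambda)}T_i^{-1}(T_i - v^{(\al_i,\lambda)})$ shows that the bar images of the remaining generators lie in $J_\lambda$. Descent to $V_\theta(\lambda)$ follows from bar-stability of $R$, which in turn follows from the compatibility $\ol{(u,v)} = \overline{(\bar u, \bar v)}$ checked on generators. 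The main obstacle is the well-definedness and symmetry of the bilinear form: the $v$-Serre relations on the $F_i$'s, combined with the twisted commutation relation (iv) involving the involution $\theta$, must be consistent with the adjoint pairing $(E_i, F_i)$. In the ordinary $\Uf$ setting this is enforced automatically by the coproduct on $\U$, but here the $\delta_{\theta(i),j}T_i$ correction breaks the strict Hopf-algebraic bookkeeping, so a careful direct verification on low-degree monomials, leveraging $T_i = T_{\theta(i)}$ and $\theta(\lambda)=\lambda$, is required.
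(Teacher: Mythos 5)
The paper itself gives no proof of this proposition; it is imported as \cite[Proposition~2.11]{EK2}, so there is nothing in this manuscript to compare against line by line. Your Verma-quotient strategy is the natural one and is sound in outline, but the place you flag as ``the main obstacle'' --- well-definedness and symmetry of the form --- is handled more loosely than it needs to be, and there is a cleaner route that avoids the ad hoc low-degree checks.

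Observe that the defining relations (i)--(v) of $B_\theta(\mf{g})$ in Definition~\ref{def:Bt} are invariant under the anti-automorphism $\varpi$ determined by $\varpi(E_i)=F_i$, $\varpi(F_i)=E_i$, $\varpi(T_i)=T_i$, $\varpi(v)=v$: for (iv) this uses precisely the symmetry of $(\scbul,\scbul)$, $\delta_{\theta(i),j}=\delta_{\theta(j),i}$, and $T_{\theta(j)}=T_j$, and for (v) the Serre relations are manifestly $\varpi$-stable since they involve only $E$'s or only $F$'s. Then on $M_\theta(\lambda)=B_\theta(\mf{g})/J_\lambda$ one has a $Q$-grading with one-dimensional top piece $\mb{K}\vac_\lambda$ (by the triangular-decomposition argument you give), and the contravariant form can be defined outright by $(a\vac_\lambda,b\vac_\lambda)\seteq\pi_\lambda\!\left(\varpi(a)\,b\,\vac_\lambda\right)$ where $\pi_\lambda$ is the projection onto $\mb{K}\vac_\lambda\cong\mb{K}$. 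Well-definedness is then automatic (no Serre-compatibility check is needed because one is working inside the honest module $M_\theta(\lambda)$), and symmetry follows from uniqueness of the contravariant form normalized by $(\vac_\lambda,\vac_\lambda)=1$, together with $\varpi^2=\id$. This replaces your ``careful direct verification on low-degree monomials'', which as stated does not actually establish symmetry in all degrees, only consistency of an inductive definition.

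Two smaller points. First, in deducing (i)(c) and irreducibility, you should record that distinct $Q$-weight spaces of $V_\theta(\lambda)$ are orthogonal for this form (this is what reduces the nonzero pairing of $u$ with some $F_{i_1}\cdots F_{i_n}\vac_\lambda$ to a pairing in weight zero, forcing $n=0$). Second, in part (iii) the compatibility you invoke should read $\ol{(u,v)}=(\bar{u},\bar{v})$ rather than $\ol{(u,v)}=\overline{(\bar{u},\bar{v})}$; the former is the statement you actually need to conclude that the radical $R$ is bar-stable, and it follows by induction from $(\vac_\lambda,\vac_\lambda)=1$ together with $\ol{E_iu}=E_i\bar{u}$ and $\ol{F_iu}=F_i\bar{u}$. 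With these adjustments the argument is complete and, as far as one can tell without \cite{EK2} in hand, is the same Shapovalov-type construction one would expect that reference to use.
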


Hereafter we assume further that
\[
\text{\em there is no $i\in I$ such that $\theta(i)=i$.}
\]
In \cite{EK2}, we conjectured that $V_\theta(\la)$ has a crystal basis.
This means the following.
Since $E_i$ and $F_i$ satisfy the $v$-boson relation $E_iF_i=v^{-(\al_i,\al_i)}F_iE_i+1$, 
we define the modified root operators:
\[
\tEi(u)=\sum_{n\ge1}F_i^{(n-1)}u_n\ 
\text{and}\ 
\tFi(u)=\sum_{n\ge0}F_i^{(n+1)}u_n,
\]
when writing $u=\sum_{n\ge0}F_i^{(n)}u_n$ with $E_iu_n=0$.
Let $L_\theta(\la)$ be the $\mb{A}_0$-submodule of $V_\theta(\la)$ generated by $\widetilde{F}_{i_1} \cdots \widetilde{F}_{i_\ell}\vac_\lambda$ ($\ell\ge 0$ and $i_1,\ldots,i_\ell\in I$),
and let $B_\theta(\la)$ be the subset 
\[
\set{\widetilde{F}_{i_1}\cdots\widetilde{F}_{i_\ell}\vac_\lambda \mod vL_\theta(\la)}{\ell \ge 0,i_1,\ldots, i_\ell \in I}
\]
of $L_\theta(\la)/v L_\theta(\la)$.
\begin{conj}\label{conj:crystal}
Let $\la$ be a dominant integral weight such that $\theta(\la)=\la$.
\begin{enumerate}
\item
$\tFi L_\theta(\la)\subset L_\theta(\la)$
and $\tEi L_\theta(\la)\subset L_\theta(\la)$,
\item
$B_\theta(\la)$ is a basis of $L_\theta(\la)/vL_\theta(\la)$,
\item
$\tFi B_\theta(\la)\subset B_\theta(\la)$,
and
$\tEi B_\theta(\la)\subset B_\theta(\la)\sqcup\{0\}$,
\item
$\tFi\tEi(b)=b$ for any $b\in  B_\theta(\la)$ such that $\tEi b\not=0$,
and $\tEi\tFi(b)=b$ for any $b\in  B_\theta(\la)$.
\end{enumerate}
\end{conj}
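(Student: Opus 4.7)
The plan is to extend the $\theta$-quiver geometry of Sections~4-5 to a framed setting in the spirit of Nakajima-Lusztig, so that framing dimensions encode the highest weight $\la$ and the case $\la=0$ reduces to the main theorem of the paper.

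\textbf{Step 1 (Framed $\theta$-data).} Choose a $\theta$-symmetric $I$-graded vector space $\mb{W}$ with $\dim\mb{W}_i=\la(\alpha_i^\vee)$; the assumption $\theta(\la)=\la$ ensures this is compatible with a non-degenerate symmetric bilinear form pairing $\mb{W}_i$ with $\mb{W}_{\theta(i)}$. For a $\theta$-symmetric $\mb{V}$, introduce a framed variety
\[
{}^\theta\mb{E}_{\mb{V},\mb{W},\Omega}\;:=\;\tEO{V}{\Omega}\;\oplus\;\bigoplus_{i\in I}\Hom(\mb{V}_i,\mb{W}_i),
\]
cut out by a $\theta$-compatibility condition that couples each framing map $\rho_i\colon\mb{V}_i\to\mb{W}_i$ with $\rho_{\theta(i)}$ via transposes through the symmetric forms on $\mb{V}$ and $\mb{W}$, extending the skew-symmetric condition that carves $\tEO{V}{\Omega}$ out of $\EO{V}{\Omega}$. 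The group $\tG{V}$ acts naturally.

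\textbf{Step 2 (Perverse sheaves and operators).} Mimicking Section~4, define a full subcategory ${}^\theta\ms{Q}_{\mb{V},\mb{W},\Omega}$ of $\ms{D}({}^\theta\mb{E}_{\mb{V},\mb{W},\Omega})$ generated by $\tG{V}$-equivariant semisimple complexes arising from flag-type correspondences that now also record the framing maps. Set ${}^\theta K_{\mb{W},\Omega}:=\bigoplus_{\mb{V}}K({}^\theta\ms{Q}_{\mb{V},\mb{W},\Omega})$. Define the induction $F_i$ and restriction $E_i$ by the evident framed extensions of the unframed operators.

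\textbf{Step 3 (Commutation and Serre relations).} The framing contributes additional Euler-characteristic factors in the fiber-dimension bookkeeping comparing $E_iF_j$ and $v^{-(\alpha_i,\alpha_j)}F_jE_i$; together with the unframed calculation of Section~5 this should yield
\[
E_iF_j-v^{-(\alpha_i,\alpha_j)}F_jE_i\;=\;\delta_{ij}+\delta_{\theta(i),j}\,T_i,
\]
where $T_i$ acts on the vacuum $\vac_\la:=\IC$ (attached to the zero-dimensional stratum $\mb{V}=0$) by $v^{(\alpha_i,\la)}$, exactly matching the defining relations of $B_\theta(\mf{g})$ on $V_\theta(\la)$. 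The $v$-Serre relations carry over verbatim from Section~4 since they are insensitive to framing data. One then checks $E_i\vac_\la=0$ and invokes the uniqueness part of Proposition~\ref{prop:Vtheta} to identify the module generated by $\vac_\la$ with $V_\theta(\la)$.

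\textbf{Step 4 (Crystal criterion).} Apply Theorem~\ref{cricry} exactly as the paper does for $V_\theta(0)$: estimate the leading coefficients of $E_i$ and $F_i$ on simple perverse sheaves in ${}^\theta\ms{Q}_{\mb{V},\mb{W},\Omega}$ (the framing terms appear only additively in the dimension shifts and do not disturb the inequalities), verify the structural axioms, and deduce that simple perverse sheaves descend to a basis adapted to the natural $\mb{A}_0$-lattice. This produces $L_\theta(\la)$ and $B_\theta(\la)$ together with properties~(1)--(4) of the conjecture.

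The main obstacle will be Step~1. In Nakajima's non-involutive framed quivers one freely uses two independent framing maps $\mb{W}_i\to\mb{V}_i$ and $\mb{V}_i\to\mb{W}_i$; here the $\theta$-condition must instead couple the $i$-framing with the $\theta(i)$-framing via the bilinear forms, and the sign/Lagrangian choice must be made so that simultaneously (i) $\tG{V}$-equivariance and the flag-type correspondences defining $F_i,E_i$ are preserved, (ii) the induction/restriction composition produces the correct correction $\delta_{\theta(i),j}T_i$ with eigenvalue $v^{(\alpha_i,\la)}$ on $\vac_\la$ rather than some other scalar, and (iii) the Verdier-duality/bar-invariance argument of Section~5 still applies. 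A secondary difficulty is that, unlike in the $\la=0$ case, the framed moduli space may not be a vector space: the $\theta$-compatibility is likely a nonlinear (quadratic) constraint, so one must either linearize it (e.g.\ by a Fourier-Sato-Deligne transform in the framing variables) or work directly with a closed subvariety of the ambient vector space, and show that the category ${}^\theta\ms{Q}_{\mb{V},\mb{W},\Omega}$ so defined is stable under the induction/restriction functors.
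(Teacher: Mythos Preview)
The statement you are attempting to prove is labeled a \emph{Conjecture} in the paper, and the paper does not prove it in general: the Corollary following Theorem~\ref{mt2} explicitly states that Conjecture~\ref{conj:crystal} is established only for $\lambda=0$.  There is therefore no ``paper's own proof'' to compare against for arbitrary $\lambda$; what you have written is a research plan for the open case, and you yourself flag it as such (``The main obstacle will be Step~1'').

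That said, let me point to two places where the plan, as written, would not go through.  First, the $\theta$-compatibility you impose on the framing maps is not well-posed.  You take only maps $\rho_i\colon\mb{V}_i\to\mb{W}_i$, but the transpose of such a map with respect to the symmetric forms is a map $\mb{W}_{\theta(i)}\to\mb{V}_{\theta(i)}$, which goes in the \emph{opposite} direction.  So a $\theta$-condition in the spirit of Definition~\ref{def-te} cannot relate $\rho_i$ to $\rho_{\theta(i)}$; rather it forces you to introduce framing maps in both directions (as in Nakajima's doubled framing) and couple $\mb{V}_i\to\mb{W}_i$ with $\mb{W}_{\theta(i)}\to\mb{V}_{\theta(i)}$.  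Once you do that, the framed space is genuinely larger and you need a stability or nilpotency condition to cut it down---none is proposed.

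Second, and more seriously, without such a cutting-down mechanism your framed Grothendieck group will not satisfy condition~(c) of Proposition~\ref{prop:Vtheta}, namely that the highest-weight space $\{u:E_iu=0\ \forall i\}$ is one-dimensional.  In the $\lambda=0$ case this is Lemma~\ref{lem:eps2}, whose proof uses that every nonzero weight space admits some $i$ with $\varepsilon_i>0$; for a naively framed space the extra $\Hom(\mb{V}_i,\mb{W}_i)$ factors are invisible to $\tG{V}$ and to the sink-codimension argument defining $\varepsilon_i$, so simple perverse sheaves supported on the framing directions would give additional highest-weight vectors.  Until you explain what replaces stability and why the resulting category has a unique vacuum, Step~3's appeal to the uniqueness in Proposition~\ref{prop:Vtheta} is circular.
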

Moreover we conjectured that
$V_\theta(\la)$ has a global crystal basis.
Namely we have
\begin{conj}\label{conj:bal}
$(L_\theta(\la),\overline{L_\theta(\la)},V_\theta(\la)^\text{low}_\mb{A})$
is balanced.
Here $V_\theta(\la)^\text{low}_\mb{A}\seteq\Uf_{\mb{A}}\vac_\lambda$.
\end{conj}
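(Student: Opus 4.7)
The statement covers all dominant $\theta$-invariant $\la$, but the geometric machinery of the paper is designed for the $\la=0$ case, and this is the case I would address. The strategy is to build a geometric model of $V_\theta(0)$ analogous to Lusztig's model of $\Uf$, and then deduce balancedness from two facts: (a) Verdier duality on $\tmQ{V}{\Omega}$ induces the bar involution on $V_\theta(0)$ and fixes the classes of simple perverse sheaves, and (b) the crystal criterion (Theorem \ref{cricry}) applies, once the coefficient estimates (Theorem \ref{est}) are in place.

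First, I would set up the geometric data of section 4: for a $\theta$-quiver $(I,\Omega)$ and a $\theta$-symmetric $I$-graded space $\mb{V}$, the moduli space $\tEO{V}{\Omega}$ together with the action of $\tG{V}$, the full subcategory $\tmQ{V}{\Omega}$ of $\ms{D}(\tEO{V}{\Omega})$, and its Grothendieck group $\tK{V}{\Omega}$. I would define induction operators $F_i$ and restriction operators $E_i$ via convolution diagrams adapted to the $\theta$-skew-symmetric setting, and verify the defining relations of $B_\theta(\mf{g})$. The crucial relation $E_iF_j=v^{-(\al_i,\al_j)}F_jE_i+(\delta_{ij}+\delta_{\theta(i),j}T_i)$ must be obtained from a base-change computation on $\tEO{V}{\Omega}$: the extra term $\delta_{\theta(i),j}T_i$ should appear precisely because the skew-symmetry forces the $i$- and $\theta(i)$-directions to be paired, producing an additional stratum in the intersection of the induction and restriction correspondences that is absent from Lusztig's symmetric picture.

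Next, the constant sheaf on the one-point space $\tEO{0}{\Omega}=\pt$ is killed by every $E_i$, so by the universal property of $V_\theta(0)$ (Proposition \ref{prop:Vtheta}) there is a surjective $B_\theta(\mf{g})$-linear map $V_\theta(0)\twoheadrightarrow\tKK{\Omega}\otimes_\mb{A}\mb{K}$; injectivity then follows from the linear independence of classes of simple perverse sheaves, a consequence of the decomposition theorem. To obtain the crystal and global basis statements, let $L$ be the $\mb{A}_0$-submodule of $\tKK{\Omega}\otimes_\mb{A}\mb{K}=V_\theta(0)$ spanned by the classes of simple perverse sheaves, and $B$ their image in $L/vL$. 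Theorem \ref{est} will provide upper bounds on the coefficients in the expansion $[B]=\sum_{n\ge 0}F_i^{(n)}[B_n]$ with $E_i[B_n]=0$ of each simple perverse sheaf; these bounds are exactly the hypotheses of Theorem \ref{cricry}, giving that $(L,B)$ is a crystal basis and, in particular, that $L=L_\theta(0)$. Since Verdier duality fixes simple perverse sheaves, the $\bb{Z}[v,v^{-1}]$-span of these classes is contained in $L_\theta(0)\cap\overline{L_\theta(0)}\cap V_\theta(0)^\text{low}_\mb{A}$ and projects bijectively onto $B$ in $L_\theta(0)/vL_\theta(0)$; this is the balanced triple condition.

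The main obstacle is the coefficient estimate (Theorem \ref{est}). Lusztig's proof of the analogous statement for $\Uf$ ran through an invariant inner product on $K_\Omega$, but no comparable tool is directly available for $\tKK{\Omega}$, so one must control $E_i$ and $F_i$ on simple perverse sheaves via the geometry of the $\theta$-stratifications themselves. The absence of $\theta$-compatibility of the orientation and the skew-symmetry condition prevent any straightforward reduction to Lusztig's counts, so the stratification of the relevant fiber products of $\tEO{V}{\Omega}$ and the resulting degree bounds must be carried out ab initio; this is what I expect to occupy the bulk of section 5.
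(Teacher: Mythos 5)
Your proposal follows the paper's proof essentially step for step: the geometric model via $\theta$-quivers and the category $\tmQ{V}{\Omega}$, the commutation relation with the extra $\delta_{\theta(i),j}T_i$ term (Proposition~\ref{prop:action}), the coefficient estimates of Theorem~\ref{est} feeding into the crystal criterion of Theorem~\ref{cricry}, and Verdier-duality invariance of simple perverse sheaves (Proposition~\ref{prop-V}) yielding the balanced triple for $\lambda=0$. One small imprecision worth fixing: the $\bb{Z}[v,v^{-1}]$-span of the classes $(L)$ is \emph{not} contained in $L_\theta(0)\cap\overline{L_\theta(0)}\cap V_\theta(0)^{\mathrm{low}}_\mb{A}$ (since $v(L)$ escapes $\overline{L_\theta(0)}$ and $v^{-1}(L)$ escapes $L_\theta(0)$); it is the $\bb{Z}$- or $\bb{Q}$-span that lands in the triple intersection and projects isomorphically onto $\mathcal{L}/v\mathcal{L}$, which is the balancedness condition the paper verifies in Theorem~\ref{mt2}(ii).
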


\begin{exa}\label{exa:A}
Suppose $\mf{g}=\mf{gl}_{\infty}$, the Dynkin diagram involution $\theta$ of $I$ defined by $\theta(i)=-i$ for $i \in I=\bb{Z}_{\text{odd}}$.
{\scriptsize$$
\xymatrix@R=.8ex@C=3ex{
\cdots\cdots\ar@{-}[r]&\circ\ar@{-}[r]
\ar@/^1.8pc/@{<->}[rrrrr]^\theta&\circ\ar@{-}[r]\ar@/^1.2pc/@{<->}[rrr]&\circ\ar@{-}[r]
\ar@/^.7pc/@{<->}[r]&
\circ\ar@{-}[r]&\circ\ar@{-}[r]&\circ\ar@{-}[r]&\cdots\cdots\ .\\
&-5&-3&-1&\;1\;&\;3\;&\;5\;
}
$$} 
And assume $\lambda=0$. In this case, we can prove 
\[
V_\theta(0) \cong U_v^-/\sum_{i \in I}U_v^-(f_i-f_{\theta(i)}).
\]
Moreover we can construct a PBW type basis, a crystal basis and an upper and lower global basis on $V_\theta(0)$ parametrized by "the $\theta$-restricted multisegments". For more details, see \cite{EK2}.
\end{exa}
\subsection{Criterion for crystals}\label{sec-cry}

Let $\mb{K}[e,f]$ be the ring generated by $e$ and $f$ with the defining relation $ef=v^{-2}fe+1$. We call this algebra the $v$-boson algebra. Let $P$ be a free $\bb{Z}$-module, and let $\alpha$ be a non-zero element of $P$. Let $M$ be a $\mb{K}[e,f]$-module. Assume that $M$ has a weight decomposition $M=\oplus_{\xi \in P}M_\xi$ and $eM_\lambda \subset M_{\lambda+\alpha}$ and $fM_\lambda \subset M_{\lambda-\alpha}$. Asuume the following finiteness conditions:
\[
\text{for any $\lambda \in P$, $\dim{M_\lambda}<\infty$ and $M_{\lambda+n\alpha}=0$ for $n \gg 0$}.
\]
Hence for $u \in M$, we can write $u=\sum_{n \ge 0}f^{(n)}u_n$ with $eu_n=0$. We define endmorphisms $\widetilde{e}$ and $\widetilde{f}$ of $M$ by
\[
\widetilde{e}u=\sum_{n \ge 1}f^{(n-1)}u_n, \quad \widetilde{f}u=\sum_{n \ge 0}f^{(n+1)}u_n.
\]
Let $B$ be a crystal with weight decomposition by $P$ in the following sense.
We have $\wt\colon B \to P$, $\widetilde{f}\colon B \to B$, $\widetilde{e}\colon B \to B \sqcup \{0\}$ and $\varepsilon\colon B \to \bb{Z}_{\ge 0}$ satisfying the following properties, where $B_\lambda=\wt^{-1}(\lambda)$:  
\begin{enumerate}[{\rm(i)}]
\item $\widetilde{f}B_\lambda \subset B_{\lambda-\alpha}$ and $\widetilde{e}B_\lambda \subset B_{\lambda+\alpha} \sqcup \{0\}$ for any $\lambda \in P$,
\item $\widetilde{f}\widetilde{e}b=b$ if $\widetilde{e}b \neq 0$, and $\widetilde{e} \circ \widetilde{f}=\id_B$,
\item for any $\lambda \in P$, $B_\lambda$ is a finite set and $B_{\lambda+n\alpha}=\phi$ for $n \gg 0$,
\item $\varepsilon(b)=\max\{n \ge 0 \ | \ \widetilde{e}^nb \neq 0\}$ for any $b \in B$.
\end{enumerate}
Set $\ord(a)=\sup\{n \in \bb{Z} \ | \ a \in v^n\mb{A}_0\}$ for $a \in \mb{K}$. We understand $\ord(0)=\infty$. \\
\quad Let $\{G(b)\}_{b \in B}$ be a system of generators of $M$ with $G(b) \in M_{\wt(b)}$. Asuume that we have expressions: 
\[
eG(b)=\sum_{b' \in B}E_{b,b'}G(b), \quad 
fG(b)=\sum_{b' \in B}F_{b,b'}G(b).
\]
Now consider the following conditions for these data, where $\ell=\varepsilon(b)$ and $\ell'=\varepsilon(b')$:
\begin{eqnarray}
&{}&\ord(F_{b,b'}) \ge 1-\ell', \label{c1}\\
&{}&\ord(E_{b,b'}) \ge -\ell', \\
&{}&F_{b,\widetilde{f}b} \in v^{-\ell}(1+v\bb{A}_0), \\
&{}&E_{b,\widetilde{f}b} \in v^{1-\ell}(1+v\bb{A}_0), \\
&{}&\text{$\ord(F_{b,b'})>1-\ell'$ if $\ell<\ell'$ and $b' \neq \widetilde{f}b$}, \\
&{}&\text{$\ord(E_{b,b'})>-\ell'$ if $\ell<\ell'+1$ and $b' \neq \widetilde{e}b$}. \label{c6}
\end{eqnarray}
\begin{thm}[{\cite[Theorem 4.1, Corollary 4.4]{EK2}}]\label{cricry}
Assume the conditions (\ref{c1})--(\ref{c6}). Let $L$ be the $\mb{A}_0$-submodule $\sum_{b \in B}\mb{A}_0G(b)$ of $M$. Then we have $\widetilde{e}L \subset L$ and $\widetilde{f}L \subset L$. Moreover we have
\[
\widetilde{e}G(b) \equiv G(\widetilde{e}b) \mod{vL}, \quad 
\widetilde{f}G(b) \equiv G(\widetilde{f}b) \mod{vL}
\]
for any $b \in B$. Here we understand $G(0)=0$. \\
\end{thm}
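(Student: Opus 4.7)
The plan is to argue by downward induction on the weight, or equivalently by induction on the ``$\tilde{f}$-depth'' $\ell\seteq\varepsilon(b)$, that the boson decomposition $G(b)=\sum_{n\ge 0}f^{(n)}\,c_n(b)$ (with $e\,c_n(b)=0$) satisfies $c_n(b)\in vL$ for $n\ne\ell$, and $c_\ell(b)\equiv G(b_\top) \bmod vL$, where $b_\top\seteq\tilde{e}^\ell b$. Once this structural claim is established, the desired conclusions follow directly from the definitions $\tilde{e}u=\sum_{n\ge 1}f^{(n-1)}u_n$ and $\tilde{f}u=\sum_{n\ge 0}f^{(n+1)}u_n$: shifting the boson index sends the ``top component'' $c_\ell(b)$ of $G(b)$ to the top component of $G(\tilde{e}b)$ (respectively $G(\tilde{f}b)$), while components living in $vL$ stay in $vL$.

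First I would address the base case $\ell=0$. Here $\tilde{e}b=0$, and the assertion is equivalent to $eG(b)\in vL$. Writing $eG(b)=\sum_{b'}E_{b,b'}G(b')$, the raw bound (\ref{c2}) on $\ord E_{b,b'}$ is not individually strong enough to force each term into $vL$. The strict strengthening (\ref{c6}) --- which applies whenever $\ell=0<\varepsilon(b')+1$, with the exceptional case $b'=\tilde{e}b$ vacuous --- supplies the extra factor of $v$ on the nondiagonal coefficients, after which one must still verify that the ``diagonal'' contributions (terms where $b'$ varies within a single $\tilde{e}$-string under the action of $e$) combine to elements of $vL$ rather than merely of $L$. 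This relies on the standard $v$-boson identity and the already-established inductive structure for the relevant $G(b')$.

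For the inductive step at depth $\ell\ge 1$, I would start from $fG(\tilde{e}b)=\sum_{b''}F_{\tilde{e}b,b''}G(b'')$. Condition (\ref{c3}) identifies the leading term as $v^{-(\ell-1)}G(b)+\text{l.o.t.}$, while (\ref{c5}) ensures the nondiagonal remainder satisfies $\ord>1-\varepsilon(b'')$ whenever $\varepsilon(b'')>\varepsilon(\tilde{e}b)$ and $b''\ne b$. Using the inductive decomposition of $G(\tilde{e}b)$ to compute $f\cdot G(\tilde{e}b)$ via the boson identity $f\cdot f^{(n)}=[n+1]_v f^{(n+1)}$, one solves for the boson decomposition of $G(b)$ and verifies the structural claim by combining the leading term with the valuation bounds (\ref{c1})--(\ref{c6}) and the inductive hypothesis at smaller depth.

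The main obstacle will be the valuation accounting: the structural claim demands that the many separate contributions (non-leading parts of $fG(\tilde{e}b)$, $vL$-components of the inductive decomposition, cross terms from the boson identity) all conspire to land in $vL$ in the ``wrong'' boson degrees. This is exactly what the strict inequalities (\ref{c5}) and (\ref{c6}) are calibrated to deliver, but implementing the bookkeeping cleanly requires treating entire $\tilde{e}$-strings simultaneously rather than single elements, and the inductive hypothesis must be formulated strongly enough (with joint control on all $c_n(b)$) to feed back into itself at each step.
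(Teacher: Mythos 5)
The paper does not actually prove this statement: it quotes it from \cite[Theorem 4.1, Corollary 4.4]{EK2} and remarks only that the reference establishes it under more general hypotheses. So there is no internal proof to compare you against; what follows is an assessment of the proposal on its own terms.

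The inductive invariant you propose does not suffice. You want to show, with $\ell=\varepsilon(b)$, that the boson components satisfy $c_n(b)\in vL$ for $n\ne\ell$, and then conclude by ``shifting the boson index'': the claim that ``components living in $vL$ stay in $vL$.'' This step is the crux, and it does not follow. Passing from $c_n(b)$ to $f^{(n\pm 1)}c_n(b)$ requires $f^{(k)}$ to preserve $vL$, but $f$ does not preserve $L$: condition (\ref{c1}) only gives $\ord(F_{b,b'})\ge 1-\varepsilon(b')$, which is negative whenever $\varepsilon(b')\ge 2$, so $f$ applied to a single generator $G(b')$ can already leave $L$. The whole point of the theorem is that the \emph{modified} operator $\widetilde{f}$ preserves $L$ even though $f$ does not, so one cannot use ``$f^{(k)}$ sends $vL$ into $vL$'' as a black box. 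A working invariant would need to control the quantities $f^{(k)}c_n(b)$ (or, better, the expansion of $\widetilde{e}G(b)$ and $\widetilde{f}G(b)$ directly in the $G(b')$'s) rather than the abstract boson components $c_n(b)$.

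The base case also contains an error. You assert that at $\ell=0$ the conclusion ``is equivalent to $eG(b)\in vL$.'' But $eG(b)=\sum_{n\ge 1}v^{1-n}f^{(n-1)}c_n(b)$ while $\widetilde{e}G(b)=\sum_{n\ge 1}f^{(n-1)}c_n(b)$; the scalars $v^{1-n}$ have strictly negative order for $n\ge 2$, so $eG(b)\in vL$ is a strictly stronger statement, not an equivalent one. Moreover, as you yourself observe, (\ref{c6}) at $\ell=0$ only yields $\ord(E_{b,b'})\ge 1-\varepsilon(b')$, which lies in $v\mb{A}_0$ only when $\varepsilon(b')=0$; there is no reason coefficients with $\varepsilon(b')\ge 2$ should contribute inside $vL$, and ``the standard $v$-boson identity and the already-established inductive structure'' is too vague to close this. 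You would need, at minimum, to reformulate what is being proved in the base case and supply the estimate that lets the would-be problematic terms cancel or vanish, which is precisely where the real content of the argument in \cite{EK2} lies.
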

In \cite{EK2}, this theorem is proved under more general assumptions.

\subsection{Perverse Sheaves}
\subsubsection{Perverse Sheaves}
In this paper, we consider algebraic varieties over $\bb{C}$. Let $\ms{D}(X)$ be the bounded derived category of constructible complexes of sheaves on an algebraic variety $X$. We denote by $\ms{D}^{\le{0}}(X)$ (resp. $\ms{D}^{\ge{0}}(X)$) the full subcategory of $\ms{D}(X)$ consisting of objects $L$ satisfying $H^k(L)=0$ for $k>0$ (resp. $k<0$). Put $\ms{D}^{\le{n}}=\ms{D}^{\le{0}}[-n]$ and $\ms{D}^{\ge{n}}=\ms{D}^{\ge{0}}[-n]$. \\
\quad For a morphism $f\colon X \to Y$ of algebraic varieties $X$ and $Y$, let $f^*$ be the inverse image, $f_!$ the direct image with proper support and $D\colon \ms{D}(X) \to \ms{D}(X)$ the Verdier duality functor. 
\begin{lem}\label{lem-VD} \ 
\begin{enumerate}[{(i)}]
\item Suppose that $f\colon X \to Y$ is smooth with the fiber dimension $d$. Then $D(f^*L) \cong f^*D(L)[2d]$ for $L \in \ms{D}(Y)$.
\item Suppose that $f\colon X \to Y$ is proper. Then $D(f_!L) \cong f_!D(L)$ for $L \in \ms{D}(X)$.
\end{enumerate}
\end{lem}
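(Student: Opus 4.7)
The plan is to derive both statements from the standard six-functor formalism for constructible sheaves, specifically from the general Verdier duality relations $D\circ f_!\cong f_*\circ D$ and $D\circ f^*\cong f^!\circ D$, which hold for an arbitrary morphism $f\colon X\to Y$ of (reasonable) algebraic varieties. These identities are the content of Verdier duality as formulated via the adjoint pairs $(f_!,f^!)$ and $(f^*,f_*)$; I would just cite them from, e.g., Kashiwara--Schapira, \emph{Sheaves on Manifolds}, or Iversen's book.

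For part (ii), the argument is then immediate: when $f$ is proper, one has $f_!\cong f_*$, so
\[
D(f_!L)\cong f_*D(L)\cong f_!D(L).
\]
No calculation beyond invoking properness is needed. This is essentially trivial once the general duality swap is in hand.

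For part (i), I would combine the general identity $D\circ f^*\cong f^!\circ D$ with the smoothness statement $f^!\cong f^*[2d]$ for a smooth morphism of relative dimension $d$ (which amounts to the purity/smoothness theorem for the dualizing complex, $\omega_{X/Y}\cong \bb{C}_X[2d]$, and the projection formula for $f^!$). Plugging in gives
\[
D(f^*L)\cong f^!(DL)\cong f^*(DL)[2d],
\]
which is the desired isomorphism.

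The only nontrivial input is the purity statement $f^!\cong f^*[2d]$ for smooth $f$; this is the step I expect to be the main obstacle in the sense that it is the one nonformal ingredient, as opposed to the purely categorical manipulations used elsewhere. However, since the lemma is a standard recollection from the theory of constructible sheaves, in the paper itself I would simply reference the classical sources rather than reproduce these proofs in detail.
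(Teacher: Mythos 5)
Your proposal is correct and is the standard argument: part (ii) follows from $D\circ f_!\cong f_*\circ D$ together with $f_!\cong f_*$ for proper $f$, and part (i) follows from $D\circ f^*\cong f^!\circ D$ together with $f^!\cong f^*[2d]$ for smooth $f$ of relative dimension $d$. The paper itself records this lemma as a standard recollection from the six-functor formalism (citing sources such as [BBD] and [KS]) and supplies no proof, so your derivation matches the intended, classical justification.
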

Let $({}^p\ms{D}^{\le 0}(X),{}^p\ms{D}^{\ge 0}(X))$ be the perverse $t$-structure and $\Perv(X)\seteq{}^p\ms{D}^{\le 0}(X) \cap {}^p\ms{D}^{\ge 0}(X)$.
\begin{lem}\label{lem:hom}
Suppose $L \in {}^p\ms{D}^{\le 0}(X)$ and $K \in {}^p\ms{D}^{\ge 0}(X)$, then $H^j(\rHom(L,K))=0$ for $j<0$, namely $\rHom(L,K) \in \ms{D}^{\ge 0}(X)$.
\end{lem}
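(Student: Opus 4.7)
The statement is essentially the local (sheafy) version of the Hom-vanishing axiom for a $t$-structure, applied to the perverse $t$-structure on $X$. My plan is to reduce it to three ingredients: the general $t$-structure orthogonality property, the locality of the perverse $t$-structure, and the fact that cohomology sheaves of $\rHom$ are computed locally as Ext groups.

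First I would recall the general fact that, for any $t$-structure $(\ms{T}^{\le 0},\ms{T}^{\ge 0})$ on a triangulated category $\ms{T}$, one has $\Hom_{\ms{T}}(A,B)=0$ whenever $A\in\ms{T}^{\le 0}$ and $B\in\ms{T}^{\ge 1}$. Applied to the perverse $t$-structure, this gives: if $L\in{}^p\ms{D}^{\le 0}(X)$ and $K\in{}^p\ms{D}^{\ge 0}(X)$, then for $j<0$ the shift $K[j]$ lies in ${}^p\ms{D}^{\ge -j}(X)\subset{}^p\ms{D}^{\ge 1}(X)$, so $\Hom_{\ms{D}(X)}(L,K[j])=0$. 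This is the global version of what we want.

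Next I would pass from the global Hom statement to the sheaf statement by localization. The perverse $t$-structure is of local nature: for any open $U\hookrightarrow X$, the (exact) restriction functor sends ${}^p\ms{D}^{\le 0}(X)$ into ${}^p\ms{D}^{\le 0}(U)$ and ${}^p\ms{D}^{\ge 0}(X)$ into ${}^p\ms{D}^{\ge 0}(U)$. Therefore the previous paragraph applies verbatim on each $U$, giving $\Hom_{\ms{D}(U)}(L|_U, K|_U[j])=0$ for every open $U\subset X$ and every $j<0$.

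Finally, I would identify the cohomology sheaves of $\rHom$ with these Ext groups. The standard formula
\[
H^{j}\bigl(\rHom(L,K)\bigr)|_{U}\;\cong\;H^{j}\bigl(\rHom(L|_U,K|_U)\bigr)
\]
together with the identification of stalks
\[
H^{j}\bigl(\rHom(L,K)\bigr)_{x}\;=\;\varinjlim_{U\ni x}\Hom_{\ms{D}(U)}(L|_U,K|_U[j])
\]
shows that each stalk vanishes for $j<0$ by the previous step. Hence $H^{j}(\rHom(L,K))=0$ for $j<0$, i.e.\ $\rHom(L,K)\in\ms{D}^{\ge 0}(X)$.

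The argument is essentially formal; the only subtle point is invoking the correct locality statements, namely that restriction to an open is $t$-exact for the perverse $t$-structure and that stalks of $\rHom$ are limits of Ext on open neighborhoods. Both are standard and can simply be cited from the literature on perverse sheaves (e.g.\ Beilinson–Bernstein–Deligne), so no real obstacle is expected.
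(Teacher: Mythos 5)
Your proof is correct, and it is the standard argument for this fact; the paper simply states the lemma without proof (it is a well-known consequence of the axioms for the perverse $t$-structure, implicitly cited to \cite{BBD}), so there is no competing approach in the paper to compare against. The three ingredients you identify are exactly right: the orthogonality $\Hom_{\ms{D}(U)}(A,B[j])=0$ for $A\in{}^p\ms{D}^{\le 0}(U)$, $B\in{}^p\ms{D}^{\ge 0}(U)$, $j<0$ (since $B[j]\in{}^p\ms{D}^{\ge 1}(U)$); $t$-exactness of restriction to opens for the perverse $t$-structure, which follows from the local nature of the (co)support conditions defining it; and the fact that $H^j(\rHom(L,K))$ is the sheafification of $U\mapsto\Hom_{\ms{D}(U)}(L|_U,K|_U[j])$, so that its stalks are the filtered colimits you wrote down (here one uses that $\rHom(L,K)|_U\cong\rHom(L|_U,K|_U)$ and that the higher sheaf-cohomology terms in the hypercohomology spectral sequence for $R\Gamma(U,-)$ die in the colimit over small $U$). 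Since filtered colimits are exact and each term vanishes for $j<0$, every stalk of $H^j(\rHom(L,K))$ vanishes, hence the sheaf itself does. One small remark: an alternative, slightly more elementary route avoids the stalk/colimit discussion by filtering $L$ and $K$ by their perverse cohomology objects and reducing to the case where both are perverse sheaves, where the claim follows directly from the support and cosupport conditions on a stratification; but your argument is cleaner and equally rigorous.
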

\quad Let ${}^p\!H^k( \ )$ be the $k$-th perverse cohomology sheaf. We say that an object $L$ in $\ms{D}(X)$ is semisimple if $L$ is isomorphic to the direct sum $\oplus_{k}{}^p\!H^k(L)[-k]$ and if each ${}^p\!H^k(L)$ is a semisimple perverse sheaf. Assume that we are given an action of a connected algebraic group $G$ on $X$. A semisimple object $L$ in $\ms{D}(X)$ is said to be $G$-equivariant if each ${}^p\!H^i(L)$ is a $G$-equivariant perverse sheaf. 
\begin{lem}\label{lem-pv} \ 
\begin{enumerate}[{(i)}]
\item Suppose that $f\colon X \to Y$ is smooth with connected fibers of dimension $d$. Then we have a fully faithful functor $\Perv(Y) \to \Perv(X)$ given by $K \mapsto f^*K[d]$. Moreover if $K$ is simple, then $f^*K[d]$ is simple.
\item Let $G$ be a connected algebraic group of dimension $d$ and $\Perv_G(X)$ the category of $G$-equivariant perverse sheaves. Suppose that $f\colon X \to Y$ is a principal $G$-bundle. The functors
\[
\Perv(Y) \to \Perv_G(X)\colon K \mapsto f^*K[d]
\]
and 
\[
\Perv_G(X) \to \Perv(Y)\colon L \mapsto ({}^pH^{-d}f_*L)
\]
define an equivalence of categories, quasi-inverse to each other. \\
\quad Moreover if $K$ is a semisimple object of $\ms{D}(Y)$, then $f^*K$ is a $G$-equivariant semisimple object in $\ms{D}(X)$. Conversely, if $L$ is a $G$-equivariant semisimple object of $\ms{D}(X)$, then there is a unique semisimple object $K \in \ms{D}(Y)$ such that $L \cong f^*K$. 
\end{enumerate}
\end{lem}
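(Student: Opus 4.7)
The plan is to prove (i) first, then derive (ii) from (i) via the étale-local triviality of principal bundles.

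For (i), the $t$-exactness of $f^*[d]$ on the perverse $t$-structure is a standard consequence of $f$ being smooth of relative dimension $d$: the relation $f^! \cong f^*[2d]$ gives $f^*[d] \cong f^![-d]$, so the functor is simultaneously right and left $t$-exact. Full faithfulness then follows from the adjunction $\Hom_{\ms{D}(X)}(f^*K[d], f^*K'[d]) \cong \Hom_{\ms{D}(Y)}(K, f_*f^*K')$ combined with the fact that connectedness of the fibres of $f$ forces the adjunction unit $K' \to f_*f^*K'$ to induce an isomorphism in the relevant perverse degrees. The simplicity assertion I would handle via intersection cohomology: writing $K = \IC(\bar{U},\cal{L})$ for a simple local system $\cal{L}$ on a smooth locally closed $U \subset Y$, one finds $f^*K[d] \cong \IC(\overline{f^{-1}(U)}, f^*\cal{L})$ up to an appropriate shift; the pullback $f^*\cal{L}$ remains irreducible because the connectedness of fibres makes $\pi_1(f^{-1}(U)) \to \pi_1(U)$ surjective, so irreducible representations pull back to irreducible representations.

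For (ii), a principal $G$-bundle $f\colon X \to Y$ is smooth of relative dimension $d = \dim G$ with connected fibres, so (i) already delivers full faithfulness of $f^*[d]$, and the $G$-equivariance of $f^*K[d]$ for the trivial $G$-action on $Y$ is automatic. Essential surjectivity onto $\Perv_G(X)$ is the main content: I would use étale-local triviality of $f$ to obtain, over an étale open $U \to Y$, an identification $X \times_Y U \cong U \times G$; a $G$-equivariant perverse sheaf on $U \times G$ is necessarily of the form $p_U^* K_U[d]$ for a unique $K_U \in \Perv(U)$. Gluing these local $K_U$ along the étale transition functions yields the desired $K \in \Perv(Y)$ with $f^*K[d] \cong L$, and the quasi-inverse is then forced to coincide with $L \mapsto {}^pH^{-d}f_*L$. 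For the semisimplicity assertion, decompose $L = \bigoplus_k {}^pH^k(L)[-k]$, apply the perverse equivalence summand-by-summand to produce $K_k \in \Perv(Y)$, and set $K = \bigoplus_k K_k[-k]$; semisimplicity of each $K_k$ is transported from that of ${}^pH^k(L)$ through the equivalence of abelian categories.

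The main obstacle I anticipate is the descent step in (ii): verifying that the local descriptions $L|_{X\times_Y U} \cong p_U^*K_U[d]$ glue coherently on double overlaps of an étale cover of $Y$. On an overlap $U_1 \times_Y U_2$ one must extract an isomorphism $K_{U_1}|_{U_{12}} \cong K_{U_2}|_{U_{12}}$ from the cocycle data and check the cocycle condition on triple overlaps; the crucial point is that the $G$-valued transition function acts trivially on $p_U^*K_U[d]$ (since the $G$-action on the base factor is trivial), reducing the problem to ordinary descent for perverse sheaves along an étale cover, which is standard once phrased this way. The remaining steps are straightforward consequences of BBD's perverse $t$-structure theory once this descent is in hand.
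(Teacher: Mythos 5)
The paper does not actually prove Lemma \ref{lem-pv}: like the neighboring Lemmas \ref{lem-VD}, \ref{lem:hom}, and \ref{lem:perv}, it is recalled as standard background from \cite{BBD} (see 4.2.4--4.2.6 and \S 5.2 there) and from \cite{Lus3} (\S 8.1). Your proposal therefore does not compete with an argument in the text; it supplies one that is omitted. It runs along the canonical lines and is basically sound: $t$-exactness of $f^*[d]$ from $f^![-d]\cong f^*[d]$; full faithfulness from adjunction plus connectedness of fibres; preservation of simplicity from compatibility of smooth pullback with intermediate extension and from surjectivity of $\pi_1(f^{-1}U)\to\pi_1(U)$; and, in (ii), essential surjectivity onto $\Perv_G(X)$ by descent. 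One remark on the route: you pass through \'etale-local triviality of the bundle and then ordinary \'etale descent, whereas the usual reference proof (BBD \S 5.2.5--5.2.6, and Lusztig's treatment) descends directly along the smooth surjective morphism $f$ itself, using that a $G$-equivariance datum on a perverse sheaf (with $G$ connected, so the datum is unique when it exists) is precisely a descent datum for $f$; this avoids both the local trivialisation and the cocycle book-keeping you flag as the main obstacle.

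Two places in your write-up are invocations rather than proofs and should be filled in. First, for full faithfulness in (i) the phrase that connectedness ``forces the adjunction unit $K'\to f_*f^*K'$ to induce an isomorphism in the relevant perverse degrees'' is precisely the point to be proved. The clean version: since $f^*[d]$ is right $t$-exact, its right adjoint $f_*[-d]$ is left $t$-exact, so ${}^p\!H^0\circ f_*[-d]$ is right adjoint to $f^*[d]$ restricted to $\Perv$; one then checks that the unit $K'\to {}^p\!H^0\bigl(f_*[-d]\,f^*[d]K'\bigr)={}^p\!H^0(f_*f^*K')$ is an isomorphism, which is where connectedness of fibres enters (via $H^0(f_*\mb{1}_X)\cong\mb{1}_Y$). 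Second, in (ii) you assert that the quasi-inverse ``is then forced to coincide with'' $L\mapsto{}^p\!H^{-d}f_*L$. This again is exactly the statement that ${}^p\!H^{-d}f_*={}^p\!H^0(f_*[-d])$ is the right adjoint of $f^*[d]$ on perverse hearts; once full faithfulness and essential surjectivity onto $\Perv_G(X)$ are established, the quasi-inverse is this adjoint restricted to $\Perv_G(X)$, but that identification deserves a sentence rather than the word ``forced.'' Neither gap is deep, and both are closed by the adjunction argument above; with them filled in, your proof is a correct account of the standard result.
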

\quad We denote by $\mb{1}_X$ the constant sheaf on $X$. 
\begin{lem}[\cite{BBD}, \cite{Lus3}]\label{lem:perv} \ 
\begin{enumerate}[{(\rm 1)}]
\item Let $f\colon X \to Y$ be a projective morphism with $X$ smooth. Then $f_!\mb{1}_X \in \ms{D}(Y)$ is semisimple.
\item Let $f\colon  X \to Y$ be a morphism. Assume that there exists a partition $X=X_0 \sqcup X_1 \sqcup \cdots \sqcup X_m$ such that $X_{\le j}=X_0 \cup X_1 \cup \cdots \cup X_j$ is closed for $j=0,1, \ldots ,m$. Assume that, for each $j$, the restriction $f_j\colon X_j \to Y$ of $f$ decomposes as $X_j \stackrel{f''_j}{\longrightarrow} Z_j \stackrel{f'_j}{\longrightarrow} Y$ such that $Z_j$ is smooth, $f''_j$ is an affine bundle and $f'_j$ is projective. Then $f_!\mb{1}_X \in \ms{D}(Y)$ is semisimple. Moreover, we have $f_!\mb{1}_X\cong\bigoplus_{j}(f_j)_!\mb{1}_{X_j}$.
\end{enumerate}
\end{lem}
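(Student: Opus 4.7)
The plan is to prove (1) by direct invocation of the Beilinson--Bernstein--Deligne decomposition theorem, and then to deduce (2) by induction on $m$, using (1) as the key input at each stage.

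For (1): since $X$ is smooth, $\mb{1}_X[\dim X]$ is a direct sum of simple perverse sheaves (one per connected component), and in particular is pure of weight zero. The BBD decomposition theorem, applied to the proper morphism $f$, yields $f_*\mb{1}_X[\dim X]\cong\bigoplus_i{}^pH^i(f_*\mb{1}_X[\dim X])[-i]$ with each perverse cohomology a semisimple perverse sheaf on $Y$. Since $f_!=f_*$ for a proper $f$, this translates, after a uniform shift by $-\dim X$, into the semisimplicity of $f_!\mb{1}_X$ in the sense of the paper.

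For (2) I would proceed by induction on $m$. The base case $m=0$ follows from (1): writing $f=f_0'\circ f_0''$ with $f_0''$ an affine bundle of rank $r_0$, one has $(f_0'')_!\mb{1}_{X_0}\cong\mb{1}_{Z_0}[-2r_0]$, so $f_!\mb{1}_X\cong(f_0')_!\mb{1}_{Z_0}[-2r_0]$, which is semisimple by (1) applied to the projective morphism $f_0'$ from the smooth variety $Z_0$. For the inductive step, set $U\seteq X_m$ (open in $X$, because $X_{\le m-1}$ is closed) and $Z\seteq X_{\le m-1}$, with open inclusion $j\colon U\hookrightarrow X$ and closed inclusion $i\colon Z\hookrightarrow X$. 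Applying $f_!$ to the excision triangle $j_!\mb{1}_U\to\mb{1}_X\to i_*\mb{1}_Z\xrightarrow{+1}$ yields
\[
(f_m)_!\mb{1}_{X_m}\longrightarrow f_!\mb{1}_X\longrightarrow (f|_Z)_!\mb{1}_Z\xrightarrow{+1},
\]
in which the left term is semisimple by the base case and, by induction, $(f|_Z)_!\mb{1}_Z\cong\bigoplus_{k<m}(f_k)_!\mb{1}_{X_k}$ is semisimple as well. To obtain the asserted direct sum decomposition it remains to show that the connecting morphism $\delta\colon (f|_Z)_!\mb{1}_Z\to(f_m)_!\mb{1}_{X_m}[1]$ vanishes, so that the triangle splits.

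The main obstacle is exactly this vanishing of $\delta$; it is really the content of the decomposition theorem itself. Every simple summand of either side is, up to shift, an intersection cohomology complex $\IC(\overline V,\ms{L})$ on the closure of a smooth stratum of $Y$, and all such summands are pure of weight zero in the Deligne sense (via a spreading-out argument to finite fields, or via Saito's theory of pure Hodge modules over $\bb{C}$). A morphism from a pure weight-zero complex into the $[+1]$-shift of another such complex would land in weight one and must therefore vanish; hence $\delta=0$, the triangle splits, and we obtain $f_!\mb{1}_X\cong\bigoplus_{k=0}^{m}(f_k)_!\mb{1}_{X_k}$. All the remaining bookkeeping, namely the replacement of each $X_k$ by $Z_k$ via the affine bundle identity $(f''_k)_!\mb{1}_{X_k}\cong\mb{1}_{Z_k}[-2r_k]$ followed by (1) for the projective morphism $f'_k\colon Z_k\to Y$, is routine.
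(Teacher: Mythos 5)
The paper does not actually prove this lemma: it is stated with the citations \cite{BBD} and \cite{Lus3} and then used as a black box, so there is no proof in the text to compare against. Your proof is, nevertheless, substantially correct and matches the standard argument one finds in the cited references (Lusztig's treatment in \cite{Lus3} is essentially this). Part (1) is a direct application of the Beilinson--Bernstein--Deligne decomposition theorem to the proper map $f$, as you say, and part (2) proceeds by induction on the number of strata using the open/closed excision triangle
\[
(f_m)_!\mb{1}_{X_m}\longrightarrow f_!\mb{1}_X\longrightarrow (f|_{X_{\le m-1}})_!\mb{1}_{X_{\le m-1}}\xrightarrow{+1},
\]
with the base case handled via the affine-bundle identity $(f''_0)_!\mb{1}_{X_0}\cong\mb{1}_{Z_0}[-2r_0]$ (up to a Tate twist, which is irrelevant over $\bb{C}$) followed by part (1). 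The crux, the vanishing of the connecting map $\delta$, is correctly reduced to a weight argument: both ends of $\delta$ are pure (each $(f_k)_!\mb{1}_{X_k}$ is the proper pushforward of a pure complex from the smooth $Z_k$), so $\Hom$ into the weight-shifted target vanishes. This is exactly how Lusztig argues, working either $\ell$-adically over finite fields and transporting to $\bb{C}$, or via Saito's mixed Hodge modules; your remark acknowledging that this purity input is ``really the content of the decomposition theorem itself'' is an accurate assessment of where the real work lies. In short, the proof is sound and no more elementary route is available here, since the splitting genuinely requires the weight machinery.

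One small point of precision: you assert each simple summand is ``pure of weight zero,'' but because of the Tate twist hidden in $(f''_k)_!\mb{1}_{X_k}\cong\mb{1}_{Z_k}[-2r_k](-r_k)$, the bookkeeping of the exact weight is a bit more delicate; what actually matters is only that the source of $\delta$ has weights $\le w$ while the shifted target $(f_m)_!\mb{1}_{X_m}[1]$ has weights $\ge w+1$ for the same $w$, and that much does follow from purity of each piece. The conclusion is unaffected.
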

\subsubsection{Simple objects}
Let $Y$ be an irreducible variety and $U$ a Zariski open subset of $Y$. Set $Z:=Y \backslash U$ and $i\colon Z \hookrightarrow Y$. 
\begin{prop}
For $F \in \rm{Perv}(U)$, there exists a unique perverse sheaf ${}^\pi\!F$ on $Y$ satisfying 
\begin{enumerate}[(i)]
\item ${}^\pi\!F|_U \cong F$, 
\item $i^*({}^\pi\!F) \in {}^p\!\ms{D}^{\le -1}(Z)$,
\item $i^!({}^\pi\!F) \in {}^p\!\ms{D}^{\ge 1}(Z)$.
\end{enumerate}
We call ${}^\pi\!F$ the minimal extension of $F$. We have the following properties of the minimal extension:
\begin{enumerate}[(1)]
\item ${}^\pi\!F$ has neither non-trivial subobject nor non-trivial quotient object whose support is contained in $Z$.
\item If $F$ is simple, then ${}^\pi\!F$ is simple.
\item For the Verdier duality functors $D_Y$ and $D_U$, we have $D_Y({}^\pi\!F) \cong {}^\pi\!(D_U(F))$.
\end{enumerate}
\end{prop}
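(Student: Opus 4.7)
The plan is to follow the standard Beilinson--Bernstein--Deligne construction of the intermediate (``minimal'') extension via the image of the canonical map ${}^pH^0(j_!F)\to{}^pH^0(j_*F)$. Let $j\colon U\hookrightarrow Y$ denote the open immersion complementary to $i\colon Z\hookrightarrow Y$. The functors $j_!$ and $j_*$ are adjoint to $j^*$ and both restrict to the identity on $U$, so on the level of derived categories there is a canonical adjunction morphism $j_!F\to j_*F$. Applying the perverse truncation gives a morphism ${}^pH^0(j_!F)\to{}^pH^0(j_*F)$ of perverse sheaves on $Y$, and I would \emph{define} ${}^\pi\!F$ to be its image in the abelian category $\Perv(Y)$.

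For existence, I would check the three properties (i)--(iii) directly. Property (i) is clear because restriction to $U$ is $t$-exact and exact on $\Perv$, and both ${}^pH^0(j_!F)|_U$ and ${}^pH^0(j_*F)|_U$ canonically identify with $F$, so the image does too. For (ii), apply $i^*$; using the standard distinguished triangles coming from $j_!\to\id\to i_*i^*$ and the fact that $j_!F$ lies in ${}^p\!\ms{D}^{\le 0}(Y)$, one deduces $i^*{}^pH^0(j_!F)\in{}^p\!\ms{D}^{\le -1}(Z)$, and the image of a morphism whose source lies in ${}^p\!\ms{D}^{\le -1}(Z)$ after applying $i^*$ still lies in ${}^p\!\ms{D}^{\le -1}(Z)$. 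Property (iii) is dual, using the triangle $i_*i^!\to\id\to j_*$ and $j_*F\in{}^p\!\ms{D}^{\ge 0}(Y)$. For uniqueness, suppose $G$ also satisfies (i)--(iii). Using the adjunctions $(j_!,j^*)$ and $(j^*,j_*)$ one obtains factorizations ${}^pH^0(j_!F)\to G\to{}^pH^0(j_*F)$, and the conditions on $i^*G$ and $i^!G$ force $G$ to equal the image of the composed map.

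Property~(1) follows from the characterization: a non-trivial subobject $K\hookrightarrow{}^\pi\!F$ supported on $Z$ would equal $i_*i^*K$ and inject into $i^*{}^\pi\!F$ in perverse degree $0$, contradicting (ii); dually for a quotient, using (iii). Property~(2) follows quickly from~(1): if $F$ is simple and $L\subset{}^\pi\!F$ is a non-zero subobject, then $L|_U\subset F$ is $0$ or $F$; if $L|_U=0$ then $L$ is supported on $Z$, which~(1) forbids, and if $L|_U=F$ then $({}^\pi\!F/L)|_U=0$, again forbidden by~(1), so $L={}^\pi\!F$. Property~(3) is formal: $D_Y$ is a contravariant exact involution on $\Perv(Y)$ exchanging $j_!$ and $j_*$ (hence exchanging ${}^pH^0(j_!(\cdot))$ and ${}^pH^0(j_*(\cdot))$ up to $D_U$), and it sends the image of a morphism $\varphi$ to the image of $D_Y\varphi$; applying this to the canonical map defining ${}^\pi\!F$ yields the canonical map defining ${}^\pi(D_UF)$.

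The only step requiring any real care is verifying that $i^*{}^pH^0(j_!F)\in{}^p\!\ms{D}^{\le -1}(Z)$ and $i^!{}^pH^0(j_*F)\in{}^p\!\ms{D}^{\ge 1}(Z)$, which transfers to the image via the $t$-exactness of $i^*$ on ${}^p\!\ms{D}^{\le 0}$ and of $i^!$ on ${}^p\!\ms{D}^{\ge 0}$. This is standard BBD material and, since the result is well-known, I would either cite \cite{BBD} directly or sketch the verification using the distinguished triangles $j_!j^*\to\id\to i_*i^*$ and $i_*i^!\to\id\to j_*j^*$ together with Lemma~\ref{lem:hom}.
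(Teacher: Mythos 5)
Your proposal is correct and is precisely the standard BBD construction of the intermediate extension $j_{!*}F=\operatorname{Im}\bigl({}^pH^0(j_!F)\to{}^pH^0(j_*F)\bigr)$, with its properties verified via the $t$-exactness properties of $j_!$, $j_*$, $i^*$, $i^!$ and the duality $D_Yj_!\cong j_*D_U$. The paper states this proposition without proof as background material attributed to \cite{BBD}, so there is no internal proof to compare against; your argument is exactly the one that reference supplies.

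One small point worth tightening if you were to write this out in full: to get $i^*$ of the \emph{image} into ${}^p\ms{D}^{\le -1}(Z)$, it is cleaner to first show $i^*\,{}^pH^0(j_!F)\in{}^p\ms{D}^{\le -1}(Z)$ by using $i^*j_!=0$ and the triangle ${}^p\tau^{\le -1}j_!F\to j_!F\to{}^pH^0(j_!F)$ (which even gives $\le -2$), and then pass to the quotient using the triangle $i^*K\to i^*{}^pH^0(j_!F)\to i^*C$ together with the right $t$-exactness of $i^*$; the phrase ``the image of a morphism whose source lies in ${}^p\ms{D}^{\le -1}$'' glosses over the fact that the image is a quotient, not a subobject, of the source.
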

Let $X$ be a variety, $Y$ an irreducible locally closed smooth subvariety of $X$. For a simple local system $L$ on $Y$, the minimal extension ${}^\pi\!L[\dim{Y}]$ is called the intersection cohomology complex of $Y$. We can regard ${}^\pi\!L[\dim{Y}]$ as a simple perverse on $X$ whose support is the closure $\overline{Y}$ of $Y$. Conversely, any simple object in $\rm{Perv}(X)$ is obtained in this way.
\begin{thm}[\cite{BBD}]
For a simple perverse sheaf $F$ on $X$, there exist an irreducible closed subvariety $Y$ and an simple local system $L$ on $Y$ such that $F \cong {}^\pi\!L[\dim{Y}]$. Moreover, for simple perverse sheaves $F_1$ and $F_2$, we have $\rm{Ext}^0(F_1,F_2)=\Hom_{\rm{Perv}(X)}(F_1,F_2)=\bb{C}$ or $0$ according that $F_1$ and $F_2$ are isomorphic or not.

\end{thm}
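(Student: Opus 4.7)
The plan is to treat the two assertions separately: first, classify simple perverse sheaves via minimal extension from a simple local system on the generic smooth part of the support; second, derive the $\Hom$ assertion from Schur's lemma in the abelian category $\Perv(X)$.

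For the classification, start with a simple perverse sheaf $F$ and set $Z=\Supp(F)$. I would first argue that $Z$ is irreducible: if $Z=Z_1\cup Z_2$ were a nontrivial decomposition into proper closed subvarieties, then analyzing the adjunction triangle for the closed embedding $Z_1\hookrightarrow X$ together with the perverse $t$-structure would exhibit a nonzero perverse subobject of $F$ whose support lies in $Z_1$, contradicting simplicity. Next, choose a dense smooth Zariski-open subset $Y$ of $Z$ over which the ordinary cohomology sheaves $\cal{H}^k(F)$ are locally constant (available by generic constructibility), and let $j\colon Y\hookrightarrow X$ denote the locally closed inclusion. A dimension count for the perverse $t$-structure on the smooth variety $Y$ forces $j^*F$ to be concentrated in a single degree, namely $j^*F\cong L[\dim Y]$ for a local system $L$ on $Y$. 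Simplicity of $L$ then follows, since any proper local subsystem $L'\subset L$ would extend to a proper sub-perverse-sheaf of $F$ via minimal extension. Finally, since both ${}^\pi L[\dim Y]$ and $F$ are simple perverse sheaves on $X$ whose restriction to $Y$ is $L[\dim Y]$, the uniqueness part of the minimal-extension proposition stated above forces $F\cong{}^\pi L[\dim Y]$.

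For the $\Hom$ computation, Schur's lemma in $\Perv(X)$ does the work. Given simple $F_1,F_2$ and a nonzero morphism $\varphi\colon F_1\to F_2$, the subobjects $\ker\varphi\subset F_1$ and $\Ig\varphi\subset F_2$ are forced to be trivial or the whole object by simplicity, so $\varphi$ is an isomorphism; this gives $\Hom(F_1,F_2)=0$ whenever $F_1\not\cong F_2$. In the case $F_1\cong F_2$, $\mathrm{End}(F_1)$ is a finite-dimensional division algebra over $\bb{C}$ (finite-dimensionality coming from constructibility of the complexes), hence equals $\bb{C}$ because $\bb{C}$ is algebraically closed.

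The main obstacle is producing the local system $L$ on the generic smooth stratum of $\Supp(F)$: one must show that $j^*F$ collapses to a single shifted locally constant sheaf. This rests on the perverse $t$-structure axioms combined with the fact that on a smooth variety $Y$ a perverse sheaf with lisse ordinary cohomology sheaves is concentrated in cohomological degree $-\dim Y$. The remaining ingredients—irreducibility of the support, simplicity of $L$, uniqueness of minimal extension, and Schur's lemma—are then largely formal.
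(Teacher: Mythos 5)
The paper does not prove this statement; it is quoted verbatim from [BBD] (essentially Th\'eor\`eme 4.3.1 there) and used as a black box in the preliminaries, so there is no in-paper proof to compare against. Your argument is the standard BBD proof, and the overall structure is correct: irreducibility of $\Supp(F)$ from simplicity via recollement, collapse of $j^*F$ to a single shifted local system on a smooth dense open stratum by the (co)support conditions of the perverse $t$-structure, simplicity of $L$ by pushing a proper subsystem through the extension functor, identification $F\cong{}^\pi L[\dim Y]$ via uniqueness of minimal extension, and Schur's lemma plus the algebraic closedness of $\bb{C}$ for the $\Hom$ statement.

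Two points worth tightening. First, the cited Proposition defines ${}^\pi$ for an \emph{open} $U$ inside an irreducible $Y$; since your $Y$ is only locally closed in $X$, you should say explicitly that you first regard $F$ as a perverse sheaf on the closed irreducible subvariety $Z=\Supp(F)$, take the minimal extension from the open dense smooth $Y\subset Z$ there, and then push forward by the closed immersion $Z\hookrightarrow X$ (this also explains the ``irreducible closed subvariety'' in the theorem's phrasing: the pair is really $(\ol{Y},L\ \text{on the open dense }Y)$). Second, the step ``uniqueness of minimal extension forces $F\cong{}^\pi L[\dim Y]$'' needs an intermediate sentence: one must observe that $F$ itself satisfies the characterizing conditions (or equivalently, by property~(1) of the Proposition, that $F$ has no nonzero subobject or quotient supported in $Z\setminus Y$), which does follow from simplicity of $F$; only then does the uniqueness clause apply. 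With these two clarifications the proof is complete and is the same route as [BBD].
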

\subsubsection{Fourier-Sato-Deligne transforms}
Let $E \to S$ be a vector bundle and $E^* \to S$ the dual vector bundle. Hence $\bb{C}^\times$ acts on $E$ and $E^*$. We say that $L \in \ms{D}(E)$ is monodromic if $H^j(L)$ is locally constant on  every $\bb{C}^*$-orbit of $E$. Let $\ms{D}_{\text{mono}}(E)$ be the full subcategory of $\ms{D}(E)$ consisting of monodromic objects. Then we can define the Fourier transform 
\[
\Phi_{E/S}\colon \ms{D}_{\text{mono}}(E) \to \ms{D}_{\text{mono}}(E^*).
\]
We will use the following properties of $\Phi$.
\begin{prop}[e.g. \cite{KS}, \cite{Lau}]\label{FD}
\quad
\begin{enumerate}[{\rm(1)}]
\item For $K \in \ms{D}_{\mathrm{mono}}(E)$, we have $\Phi_{E^*/S} \circ \Phi_{E/S}(K) \cong a^*K$, where $a\colon E \to E$ is the multiplication by $-1$ on each fiber of $E$.
\item For a perverse sheaf $K \in \ms{D}_{\mathrm{mono}}(E)$, $\Phi_{E/S}(K)$ is a perverse sheaf in $\ms{D}_{\mathrm{mono}}(E^*)$.
\item Let $E_1$ and $E_2$ be two vector bundles over $S$ with rank $r_1$ and $r_2$. Let $f\colon E_1 \to E_2$ be a morphism of vector bundles and ${}^tf\colon E_2^* \to E_1^*$ the transpose of $f$. Then we have 
\[
\Phi_{E_2/S} \circ f_! \cong ({}^tf)^* \circ \Phi_{E_1/S}[r_2-r_1], \quad 
({}^tf)_! \circ \Phi_{E_2/S} \cong \Phi_{E_1/S} \circ f^*[r_1-r_2].
\]
\item Suppose that $E_1 \to S_1$ and $E \to S$ are two vector bundles. If the following two diagrams
\[
\xymatrix{
E_1\ar[d]\ar[r]^{f_E} & E\ar[d] \\
S_1\ar[r]^{\rho} & S
} \quad 
\xymatrix{
E_1^*\ar[d]\ar[r]^{f_{E^*}} & E^*\ar[d] \\
S_1\ar[r]^{\rho} & S
}
\]
are Cartesian, then we have
\[
\Phi_{E/S} \circ (f_E)_!\cong(f_{E^*})_! \circ \Phi_{E_1/S_1}, \quad 
\Phi_{E_1/S_1} \circ (f_E)^*\cong(f_{E^*})^* \circ \Phi_{E/S}.
\]
\item The Fourier transforms commute with the Verdier duality functors.
\end{enumerate}
\end{prop}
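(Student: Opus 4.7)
The proposition collects five standard properties of the Fourier-Sato-Deligne transform, and the plan is to establish all of them following the template of \cite{KS} and \cite{Lau} by realizing $\Phi_{E/S}$ as a shifted integral transform with an explicit kernel. Set $r=\rank E$, let $p_1\colon E\times_S E^*\to E$ and $p_2\colon E\times_S E^*\to E^*$ be the projections, and in the monodromic (i.e.\ $\bb{C}^\times$-conic) setting over $\bb{C}$ take the closed subset $P\subset E\times_S E^*$ defined by $\mathrm{Re}\langle\scbul,\scbul\rangle\le 0$, so that
\[
\Phi_{E/S}(K)\seteq (p_2)_!\bigl(p_1^*K\otimes\mb{1}_P\bigr)[r].
\]
Once the transform is in this kernel form, each of the five statements becomes a question about the pair $(P,[r])$.

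For (1), I would compose the two integral transforms by base change and the projection formula; the resulting convolution kernel on $E\times_S E$ is the proper push-forward from $E\times_S E^*\times_S E$ of the external tensor product of the two sign-condition kernels. A fiberwise computation, reduced to the case of a trivial line bundle, shows that this convolution is supported on the graph of the antipodal map $a\colon E\to E$ and, after the two shifts by $[r]$ combine with the dimension of the fiber, yields $a^*K$. For (2), t-exactness on monodromic perverse sheaves is checked by reducing to a trivial bundle via localization on $S$ and then analyzing $(p_2)_!(p_1^*\scbul\otimes\mb{1}_P)[r]$ stratum by stratum along $\bb{C}^\times$-orbits of the conic perverse sheaf; alternatively one may invoke the microlocal description of \cite{KS}.

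Parts (3) and (4) are formal once the kernel has been fixed. For (4), pulling the universal pairing back along $\rho$ identifies the kernel on $E_1\times_{S_1}E_1^*$ with the $\rho$-pull-back of the kernel on $E\times_S E^*$, and the two identities then follow from proper base change applied to the given Cartesian diagrams. For (3), the morphism $f$ and its transpose ${}^tf$, together with the projections from the fiber products $E_i\times_S E_j^*$ for $i,j\in\{1,2\}$, assemble into a pair of base-change diagrams whose diagonals relate $f_!$ and $f^*$ to $({}^tf)^*$ and $({}^tf)_!$; tracking the shifts by $r_1$ and $r_2$ that enter the definition of $\Phi$ on each side gives the two displayed isomorphisms. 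Part (5) follows by combining Lemma~\ref{lem-VD} with the Verdier self-duality of the kernel $\mb{1}_P[r]$ on $E\times_S E^*$: proper push-forward commutes with $D$ by Lemma~\ref{lem-VD}(ii), smooth pull-back commutes with $D$ up to the shift $[2r]$ by Lemma~\ref{lem-VD}(i), and this shift cancels against the shift already incorporated into the kernel.

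The main obstacle is (2): t-exactness fails for arbitrary constructible sheaves, so the conic/monodromic hypothesis must be used in an essential way, and the cleanest route is either the microlocal approach of \cite{KS} or a direct stratum-by-stratum calculation on $\bb{C}^\times$-orbits. Everything else reduces to careful but standard bookkeeping with proper base change, the projection formula, and keeping track of cohomological shifts.
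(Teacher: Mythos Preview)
The paper does not give a proof of this proposition: it is stated as a collection of standard properties of the Fourier--Sato--Deligne transform and attributed to the references \cite{KS} and \cite{Lau}, with no argument supplied. So there is no ``paper's own proof'' to compare against; your sketch is already more than the paper offers.

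That said, your outline is consistent with the standard treatment in those references. Realizing $\Phi_{E/S}$ as an integral transform with kernel supported on a half-space (or, in the $\ell$-adic setting of \cite{Lau}, with the Artin--Schreier kernel) and then reducing (1), (3), (4), (5) to proper base change, the projection formula, and shift bookkeeping is exactly the template one finds there. Your caveat about (2) is also well placed: t-exactness genuinely requires the monodromic hypothesis, and the microlocal argument of \cite{KS} is the cleanest route. One small point: in (5) the kernel $\mb{1}_P[r]$ is not literally Verdier self-dual on the nose---the dual involves the complementary half-space---but the antipodal symmetry makes the two kernels isomorphic, which is what you need. With that adjustment your plan is sound, though for the purposes of this paper a bare citation suffices.
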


\subsection{Quivers}
Let $I$ and $\alpha_i$'s be as in \ref{sec:qe}.
\begin{dfn}
A quiver $(I,H)$ associated with the symmetric Cartan matrix is a following data:
\bnum[{\rm(i)}]
\item a set $H$,
\item two maps $\vo, \vi\colon H \to I$ such that $\vo(h) \neq \vi(h)$ for any $h \in H$,
\item an involution $h \mapsto \bar{h}$ on $H$ satisfying $\vo(\bar{h})=\vi(h)$ and $\vi(\bar{h})=\vo{h}$,
\item $\sharp\{h \in H|\vo(h)=i,\vi(h)=j\}=-(\alpha_i,\alpha_j)$ for $i \neq j$.
\enum
\quad An orientation of a quiver $(I,H)$ is a subset $\Omega$ of $H$ such that $\Omega \cap \ol{\Omega}=\phi$ and $\Omega \cup \ol{\Omega}=H$. \\
\quad For a fixed orientation $\Omega$, we call a vertex $i \in I$ a sink if $\vo(h) \neq i$ for any $h \in \Omega$.
\end{dfn}

\begin{dfn}
Let $\cal{V}$ be the category of $I$-graded vector spaces $\mb{V}=(\mb{V}_i)_i$ with morphisms being linear maps respecting the grading. Put $\wt(\mb{V})=\sum_{i \in I}(\dim{\mb{V}}_i)\alpha_i$.
\end{dfn}
Let $\mb{S}_i$ be an $I$-graded vector space such that $\wt(\mb{S}_i)=\alpha_i$ .
\begin{dfn}
For $\mb{V} \in \cal{V}$ and a subset $\Omega$ of $H$, we define 
\[
\EO{V}{\Omega}\colon =\bigoplus_{h \in \Omega}\Hom(\mb{V}_{\vo(h)},\mb{V}_{\vi(h)}).
\]
\quad The algebraic group $\mb{G}_{\mb{V}}=\prod_{i \in I}GL(\mb{V}_i)$ acts on $\EO{V}{\Omega}$ by $(g,x) \mapsto gx$ where $(gx)_h=g_{\vi(h)} x_h g_{\vo(h)}^{-1}$.\\
\quad The group $(\bb{C}^\times)^{\Omega}$ also acts on $\EO{V}{\Omega}$ by $x_h \mapsto c_hx_h \ (h \in \Omega, c_h \in \bb{C}^\times)$. \\
\quad For $x \in \EO{V}{\Omega}$, an $I$-graded subspace $\mb{W} \subset \mb{V}$ is $x$-stable if $x_h(\mb{W}_{\vo(h)}) \subset \mb{W}_{\vi(h)}$ for any $h \in \Omega$.  
\end{dfn}
Note that $E_{\mb{S}_i,\Omega} \cong \{\pt\}$. \\

\section{A Review on Lusztig's Geometric Construction}\label{sec:Lus}
We give a quick review on Lusztig's theory in \cite{Lus1} and \cite{Lus2} (cf. \cite{Lus3}). For a sequence $\mb{i}=(i_1, \ldots ,i_m) \in I^m$ and a sequence $\mb{a}=(a_1, \ldots ,a_m) \in \bb{Z}_{\ge 0}^m$, a flag of type $(\mb{i},\mb{a})$ is by definition a finite decreasing sequence $F=(\mb{V}=\mb{F}^0 \supset \mb{F}^1 \supset \cdots \supset \mb{F}^m=\{0\})$ of $I$-graded subspaces of $\mb{V}$ such that the $I$-graded vector space $\mb{F}^{\ell-1}/\mb{F}^\ell$ vanishes in degrees $\neq{i_\ell}$ and has dimension $a_\ell$ in degree $i_\ell$. We denote by $\tFl{i}{a}{\Omega}$ the set of pairs $(x,F)$ such that $x \in \EO{V}{\Omega}$ and $F$ is an $x$-stable flag of type $(\mb{i},\mb{a})$. The group $G_{\mb{V}}$ acts on $\tFl{i}{a}{\Omega}$. The first projection $\pi_{\mb{i},\mb{a}}\colon \tFl{i}{a}{\Omega} \to \EO{V}{\Omega}$ is a $G_{\mb{V}}$-equivariant projective morphism. \\
\quad By Lemma \ref{lem:perv}, $\LL{i}{a}{\Omega}\colon =(\pi_{\mb{i},\mb{a}})_!(\mb{1}_{\tFl{i}{a}{\Omega}}) \in \ms{D}(\EO{V}{\Omega})$ is a semisimple complex. We define $\mP{V}{\Omega}$ as the set of the isomorphism classes of simple perverse sheaves $L \in \ms{D}(\EO{V}{\Omega})$ satisfying the following property: $L$ appears as a direct summand of $\LL{i}{a}{\Omega}[d]$ for some $d$ and $(\mb{i},\mb{a})$. We denote by $\mQ{V}{\Omega}$ the full subcategory of $\ms{D}(\EO{V}{\Omega})$ consisting of all objects which are isomorphic to finite direct sums of complexes of the form $L[d]$ for various $L \in \mP{V}{\Omega}$ and various integers $d$. Any complex in $\mP{V}{\Omega}$ is $G_{\mb{V}} \times (\bb{C}^\times)^{\Omega}$-equivariant. \\
\quad Let $\mb{T},\mb{W},\mb{V}$ be $I$-graded vector spaces such that $\wt(\mb{V})=\wt(\mb{W})+\wt(\mb{T})$. We consider the following diagram
\[
\xymatrix{
\EO{T}{\Omega} \times \EO{W}{\Omega} & \pE{\Omega}\ar[l]_(.3){p_1}\ar[r]^{p_2} & \ppE{\Omega}\ar[r]^{p_3} & \EO{V}{\Omega}.
}
\]
Here $\ppE{\Omega}$ is the variety of $(x,W)$ where $x \in \E{\mb{V},\Omega}$ and $W$ is an $x$-stable $I$-graded subspace of $\mb{V}$ such that $\ud{W}=\ud{\mb{W}}$. The variety $\pE{\Omega}$ consists of $(x,W,\varphi^\mb{W},\varphi^{\mb{T}})$ where $(x,W) \in \ppE{\Omega}$, $\varphi^{\mb{W}}\colon \mb{W} \cong W$, and $\varphi^{\mb{T}}\colon \mb{T} \cong \mb{V}/W$. The morphisms $p_1,p_2$ and $p_3$ are given by $p_1(x,W,\varphi^{\mb{W}},\varphi^{\mb{T}})=(x|_\mb{T},x|_\mb{W})$, $p_2(x,W,\varphi^{\mb{W}},\varphi^{\mb{T}})=(x,W)$ and $p_3(x,W)=x$. Then $p_1$ is smooth with connected fibers, $p_2$ is a principal $G_{\mb{T}} \times G_{\mb{W}}$-bundle, and $p_3$ is projective. For a $G_{\mb{T}}$-equivariant semisimple complex $K_\mb{T}$ and a $G_{\mb{W}}$-equivariant semisimple complex $K_\mb{W}$, there exists a unique semisimple complex $K''$ satisfying $p_1^*(K_\mb{T} \boxtimes K_\mb{W})=p_2^*K''$. We define $K_\mb{T}*K_{\mb{W}}\colon =(p_3)_!(K'') \in\ms{D}(\EO{V}{\Omega})$. \\
\quad For an $I$-graded subspace $\mb{U}$ of $\mb{V}$ such that $\mb{V}/\mb{U} \cong \mb{T}$, we also consider the following diagram
\[
\xymatrix{
\EO{T}{\Omega} \times \EO{U}{\Omega} & \EE{U,V}{\Omega}\ar[l]_(.45){p}\ar@{^{(}->}[r]^(.6){\iota} & \EO{V}{\Omega}.
}
\]
Here $\EE{U,V}{\Omega}$ is the variety of $x \in \EO{V}{\Omega}$ such that $\mb{U}$ is $x$-stable. For $K \in \ms{D}(\EO{V}{\Omega})$, we define $\Res_{\mb{T},\mb{U}}(K)\colon =p_!\iota^*(K)$. \\
\quad We define $K_{\mb{V},\Omega}$ as the Grothendieck group of $\mQ{V}{\Omega}$. It is the additive group generated by the isomorphism classes $(L)$ of objects $L \in \mQ{V}{\Omega}$ with the relation $(L)=(L')+(L'')$ when $L \cong L' \oplus L''$. The group $K_{\mb{V},\Omega}$ has a $\bb{Z}[v,v^{-1}]$-module structure by $v(L)=(L[1])$ and $v^{-1}(L)=(L[-1])$ for $L \in \mQ{V}{\Omega}$. Hence, $K_{\mb{V},\Omega}$ is a free $\bb{Z}[v,v^{-1}]$-module with a basis $\{(L)|L \in \mP{V}{\Omega}\}$. We define $K_{\Omega}\colon =\bigoplus_{\mb{V}}K_{\mb{V},\Omega}$ where $\mb{V}$ runs over the isomorphism classes of $I$-graded vector spaces. Recall that $\mb{S}_i$ is an $I$-graded vector space such that $\wt(\mb{S}_i)=\alpha_i$. Then we can define the induction $f_i\colon K_{\mb{W},\Omega} \to K_{\mb{V},\Omega}$ and the restriction $e'_i\colon K_{\mb{V},\Omega} \to K_{\mb{W},\Omega}$ by 
\[
f_i(K)\colon =v^{\dim{\mb{W}_i}+\sum_{i \stackrel{\Omega}{\longrightarrow} j}\dim{\mb{W}_j}}(\mb{1}_{\mb{S}_i}*K), \quad  
e'_i(K)\colon =v^{-\dim{\mb{W}_i}+\sum_{i \stackrel{\Omega}{\longrightarrow} j}\dim{\mb{W}_j}}\Res_{\mb{S}_i,\mb{V}}(K).
\]
Then Lusztig's main theorem is stated as follows.
\begin{thm}[Lusztig] \quad
\bnum[(i)]
\item The operators $e_i'$ and $f_i$ define the action of the reduced $v$-analogue $B_v(\mf{g})$ of $\mf{g}$ on $K_{\Omega} \otimes_{\bb{Z}[v,v^{-1}]} \bb{Q}(v)$. The $B_v(\mf{G})$-module $K_{\Omega} \otimes_{\bb{Z}[v,v^{-1}]} \bb{Q}(v)$ is isomorphic to $U_v^-(\mf{g})$. The involution induced by the Verdier duality functor coincides with the bar involution on $U_v^-(\mf{g})$.
\item The simple perverse sheaves in $\sqcup_{\mb{V}}\mP{V}{\Omega}$ give a lower global basis of $U_v^-(\mf{g})$.
\enum
\end{thm}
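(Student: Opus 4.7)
The plan is to establish Lusztig's theorem in three stages: verify the defining relations of $B_v(\mf{g})$ on $K_\Omega$, identify $K_\Omega \otimes \bb{Q}(v)$ with $\Uf$ via a non-degenerate geometric pairing, and then characterize the simple perverse sheaves as the lower global basis.

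First I would verify the commutation relation $e_i' f_j = v^{-(\al_i,\al_j)} f_j e_i' + \delta_{ij}$ by base change. Fitting together the convolution diagram defining $f_j$ (with spaces $\pE{\Omega}, \ppE{\Omega}$) and the restriction diagram defining $e_i'$ (with spaces $\EE{S_i,V}{\Omega}, \EO{T}{\Omega} \times \EO{U}{\Omega}$) over $\EO{V}{\Omega}$, the resulting fiber product stratifies into two loci: one where the $j$-line produced by $f_j$ is transverse to the distinguished $i$-subspace used for $e_i'$, which via proper/smooth base change (Lemma \ref{lem-VD}, Lemma \ref{lem-pv}) yields the shifted term $v^{-(\al_i,\al_j)} f_j e_i'$; and the diagonal locus, which exists only when $i = j$ and contributes the identity term. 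The normalization shifts in the definitions of $f_i$ and $e_i'$ produce exactly the needed $v$-exponents. The $v$-Serre relations are established by expressing $\sum_k (-1)^k f_i^{(k)} f_j f_i^{(b-k)} K$ as the pushforward of a signed sum of constant sheaves from partial flag varieties $\tFl{i}{a}{\Omega}$, whose total alternating sum vanishes via an isomorphism of fiber products of Grassmannians intertwined by the natural $GL$-action on consecutive $i$-blocks.

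Next I would construct a $B_v(\mf{g})$-module homomorphism $\varphi\colon \Uf \otimes_{\mb{A}} \bb{Q}(v) \to K_\Omega \otimes_{\mb{A}} \bb{Q}(v)$ sending $1 \mapsto \mb{1}_{\EO{0}{\Omega}}$, which is surjective by the explicit iterated action of the $f_i$'s on the constant sheaf on a point. For injectivity, I would introduce the geometric inner product
\[
((K_1),(K_2))_K \seteq \sum_{j} (-v)^{-j}\dim \mathrm{Ext}^j(K_1,K_2)
\]
on each $K_{\mb{V},\Omega}$ and, using the $p_1^*$--$p_2^*$ equivalence of equivariant categories and base change along $p_1, p_2, p_3$, verify the adjunction $(f_i a, b)_K = (a, e_i' b)_K$. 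This matches the Kashiwara form on $\Uf$, so non-degeneracy of both sides forces $\varphi$ to be an isomorphism. The identification of the Verdier duality with the bar involution then follows by comparing both on $\mb{1}_{\EO{0}{\Omega}}$ and on each generator $f_i$, using Lemma \ref{lem-VD} together with the explicit computation that $D(\mb{1}_{\mb{S}_i}) \cong \mb{1}_{\mb{S}_i}$ up to a shift absorbed into the normalization.

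For part (ii), I would verify that the classes $\{(L) : L \in \sqcup_{\mb{V}}\mP{V}{\Omega}\}$ satisfy the three characterizing properties of the lower global basis. Bar-invariance $\ol{(L)} = (L)$ follows from $DL \cong L$, which holds because each $L \in \mP{V}{\Omega}$ is the minimal extension of a self-dual simple local system on a $\mb{G}_{\mb{V}}$-orbit closure, combined with compatibility of duality with the $\LL{i}{a}{\Omega}$. Integrality $(L) \in \Uf_{\mb{A}}$ follows from $(L)$ being a $\mb{Z}[v,v^{-1}]$-linear combination of the classes $(\LL{i}{a}{\Omega})$, each of which is a divided-power monomial in the $f_i$'s. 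Almost-orthonormality $((L_1),(L_2))_K \in \delta_{L_1,L_2} + v\mb{A}_0$ follows from $\Hom_{\Perv}(L_1,L_2) = \bb{C}\delta_{L_1,L_2}$ combined with Lemma \ref{lem:hom}, which forces the higher $\mathrm{Ext}^j$ with $j > 0$ to contribute only positive powers of $v$. These three properties together pin down the basis. The main obstacle is the compatibility of the geometric pairing with the Kashiwara form, requiring careful bookkeeping of $\mathrm{Ext}$-groups through the $p_1, p_2, p_3$ correspondence and the precise $v$-shifts; this is the technical heart of Lusztig's original argument and is precisely the step the present paper's symmetric-crystal approach bypasses via the crystal criterion of Theorem \ref{cricry}.
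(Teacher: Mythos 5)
The paper offers no proof of this theorem: Section~3 is explicitly a ``quick review on Lusztig's theory'' and cites \cite{Lus1}, \cite{Lus2}, \cite{Lus3}, stating the result without argument. There is therefore no in-paper proof to compare against. Your sketch tracks the standard inner-product approach of Lusztig's original work---the same approach the introduction alludes to when it says ``Lusztig proved Theorem~\ref{Lthm1} using some inner product on $K_\Omega$''---and is contrasted by the present paper with its own crystal-criterion argument (Theorem~\ref{cricry}, Theorem~\ref{est}) for the symmetric-crystal analogue Theorem~\ref{mt2}. You already flag this difference at the end, correctly.

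Within your sketch two things should be tightened. First, the inner-product formula $\sum_j (-v)^{-j}\dim\mathrm{Ext}^j(K_1,K_2)$ is inconsistent with the claim that $\mathrm{Ext}^j$ with $j>0$ contributes only positive powers of $v$: as written the factor is $v^{-j}$, a negative power. The almost-orthonormality argument needs a form in which $\Hom_{\Perv}(L_1,L_2)=\bb{C}\,\delta_{L_1,L_2}$ lands in degree zero and the higher extension groups (supported in $j>0$ by Lemma~\ref{lem:hom}) land in $v\mb{A}_0$; Lusztig's actual pairing is defined through compactly supported hypercohomology of a twisted tensor product, with a dimension shift that makes the degrees come out right. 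Second, the argument for the $v$-Serre relations via an ``alternating sum of pushforwards from $\tFl{i}{a}{\Omega}$ vanishing through an isomorphism of fiber products of Grassmannians'' is a placeholder rather than an argument; Lusztig proves this through a genuinely nontrivial geometric analysis and it is one of the subtler points. Finally, bar-invariance, integrality, and almost-orthonormality characterize the signed basis $\{\pm G^{\low}(b)\}$; pinning down the signs requires the additional positivity input from the decomposition theorem (the classes $(\LL{i}{a}{\Omega})$ expand in $\{(L)\}$ with coefficients in $\bb{Z}_{\ge0}[v,v^{-1}]$), which your write-up does not mention but should.
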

\section{Quivers with an Involution $\theta$}
\subsection{Quivers with an involution $\theta$}
\begin{dfn}\label{def-tq}
A $\theta$-quiver is a data: 
\bnum
\item a quiver $(I,H)$,
\item involutions $\theta\colon I \to I$ and $\theta\colon H \to H$,
\enum
satisfying 
\bnum[{(a)}]
\item $\vo(\theta(h))=\theta(\vi(h))$ and $\vi(\theta(h))=\theta(\vo(h))$,
\item If $\theta(\vo(h))=\vi(h)$, then $\theta(h)=h$,
\item $\theta(\overline{h})=\overline{\theta(h)}$,
\item There is no $i\in I$ such that $\theta(i)=i$
\enum
A $\theta$-orientation is an orientation of $(I,H)$ such that $\Omega$ is stable by $\theta$.
\end{dfn}
From the assumption (d), any vertex $i$ is a sink with respect to some $\theta$-orientation $\Omega$.
\begin{exa}
We give two $\theta$-orientations for the case of Example \ref{exa:A}. The vertex $1$ is a sink in the right example.
\begin{eqnarray*}
{\tiny
\xymatrix@R=.8ex@C=3ex{
\cdots\cdots\ar[r]&\circ\ar[r]
\ar@/^1.8pc/@{<->}[rrrrr]^\theta&\circ\ar[r]\ar@/^1.2pc/@{<->}[rrr]&\circ\ar[r]
\ar@/^.7pc/@{<->}[r]&
\circ\ar[r]&\circ\ar[r]&\circ\ar[r]&\cdots\cdots\ \\
&-5&-3&-1&\;1\;&\;3\;&\;5\;
}, \quad 
\xymatrix@R=.8ex@C=3ex{
\cdots\cdots\ar[r]&\circ\ar[r]
\ar@/^1.8pc/@{<->}[rrrrr]^\theta&\circ\ar@/^1.2pc/@{<->}[rrr]&\circ\ar[l]\ar[r]
\ar@/^.7pc/@{<->}[r]&
\circ&\circ\ar[r]\ar[l]&\circ\ar[r]&\cdots\cdots\ .\\
&-5&-3&-1&\;1\;&\;3\;&\;5\;
}
}
\end{eqnarray*}
\end{exa}

\begin{exa}
Our definition of a $\theta$-quiver contains the case of type $A_1^{(1)}$. The following three figures are three $\theta$-orientations in this case.
\[
\xymatrix{
\circ\ar@/^1pc/@{<->}[r]^\theta & \circ\ar@<-0.5ex>[l]\ar@<0.5ex>[l]
},\quad 
\xymatrix{
\circ\ar@<0.5ex>[r]\ar@/^1pc/@{<->}[r]^\theta & \circ\ar@<0.5ex>[l]
},\quad 
\xymatrix{
\circ\ar@<-0.5ex>[r]\ar@<0.5ex>[r]\ar@/^1pc/@{<->}[r]^\theta & \circ
}.
\]
\end{exa}

\begin{dfn}
\quad A $\theta$-symmetric $I$-graded vector space $\mb{V}$ is an $I$-graded vector space endowed with a non-degenerate symmetric bilinear form $(\scbul , \scbul)\colon \mb{V} \times \mb{V} \to \bb{C}$ such that $\mb{V}_i$ and $\mb{V}_j$ are orthogonal if $j \neq \theta(i)$. For an $I$-graded subspace $\mb{W}$ of $\mb{V}$, we set
\[
\mb{W}^{\bot}\colon =\{v \in \mb{V} \ | \ \text{$(v,w)=0$ for any $w \in \mb{W}$}\}.
\]
Hence $(\mb{W}^\bot)_{\theta(i)} \cong (\mb{V}_i/\mb{W}_i)^*$. 
\end{dfn}
Note that if $\mb{W} \supset \mb{W}^{\bot}$, then $\mb{W}/\mb{W}^\bot$ has a structure of $\theta$-symmetric $I$-graded vector space. Note that two $\theta$-symmetric $I$-graded vector spaces with the same dimension are isomorphic. 
\begin{dfn}\label{def-te}
Let $(I,H)$ be a $\theta$-quiver. For a $\theta$-symmetric $I$-graded vector space $\mb{V}$ and a $\theta$-stable subset $\Omega$ of $H$, we define
\[
\tEO{V}{\Omega}\colon =\{x \in \EO{V}{\Omega} \ | \ \text{$x_{\theta(h)}=-{}^tx_{h} \in \Hom(\mb{V}_{\theta(\vi(h))}, \mb{V}_{\theta(\vo(h))})$ for any $h \in \Omega$}\}.
\]
The algebraic group $\tG{V}\colon =\{g \in \mb{G}_\mb{V} \ | \ \text{${}^tg_i^{-1}=g_{\theta(i)}$ for any $i$}\}$ naturally acts on $\tEO{V}{\Omega}$. \\
\quad Set $(\bb{C}^\times)^{\Omega,\theta}\colon =\{(c_h)_{h \in \Omega} \ | \ \text{$c_h \in \bb{C}^\times$ and $c_{\theta(h)}=c_h$}\}$.  The group $(\bb{C}^\times)^{\Omega,\theta}$ also acts on $\tEO{V}{\Omega}$ by $x_h \mapsto c_hx_h \ (h \in \Omega)$. These two actions commute with each other. 
\end{dfn}
\begin{dfn}
For a $\theta$-symmetric $I$-graded vector space $\mb{V}$, a sequence $\mb{i}=(i_1, \ldots ,i_{2m}) \in I^{2m}$ such that $\theta(i_\ell)=i_{2m-\ell+1}$ and a sequence $\mb{a}=(a_1, \ldots ,a_{2m}) \in \bb{Z}_{\ge 0}^{m}$ such that $a_{2m-\ell+1}=a_\ell$, we say that a flag of $I$-graded  subspace of $\mb{V}$
\[
F=(\mb{V}=\mb{F}^0 \supset \mb{F}^1 \supset \cdots \supset \mb{F}^m \supset \mb{F}^{m+1} \supset \cdots \supset \mb{F}^{2m}=\{0\})
\]
is of type $(\mb{i},\mb{a})$ if 
\benu[{\rm(i)}]
\item $\dim(\mb{F}^{\ell-1}/\mb{F}^\ell)_i=\left\{
\begin{array}{ll}
a_\ell & (i=i_\ell) \\
0 & (i \neq i_\ell) 
\end{array}
\right. $,
\item $\mb{F}^{2m-\ell}=(\mb{F}^{\ell})^{\bot}$.
\eenu
Then we have $\wt\mb{V}=\sum_{1 \le \ell \le 2m}a_\ell\alpha_{i_\ell}$. We denote by $\thF{i}{a}$ the set of flags of type $(\mb{i},\mb{a})$. \\
\quad For $x \in \tEO{V}{\Omega}$, a flag $F$ of type $(\mb{i},\mb{a})$ is $x$-stable if $\mb{F}^\ell \ (\ell=1, \ldots ,2m)$ are $x$-stable. We define
\[
\ttFl{i}{a}{\Omega}\colon =\{(x,F) \in \tEO{V}{\Omega} \times \thF{i}{a} \ | \ \text{$F$ is $x$-stable}\}.
\]
The group $\tG{V}$ naturally acts on $\thF{i}{a}$ and $\ttFl{i}{a}{\Omega}$.
\end{dfn}
Note that $x\colon \mb{V} \to \mb{V} \cong \mb{V}^*$ in $\tEO{V}{\Omega}$ may be regarded as a skew-symmetric form on $\mb{V}$, and the condition that $F$ is $x$-stable is equivalent to the one $x(\mb{F}^\ell,\mb{F}^{2m-\ell})=0$ for any $\ell$. \\
\quad The following lemma is obvious.
\begin{lem}\label{lem:sym-proper}
The variety $\ttFl{i}{a}{\Omega}$ is smooth and irreducible. The first projection $\tpi{i}{a}\colon \ttFl{i}{a}{\Omega} \to \tEO{V}{\Omega}$ is $\tG{V} \times (\bb{C}^\times)^{\Omega,\theta}$-equivariant and projective.
\end{lem}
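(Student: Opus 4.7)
The plan is to analyze the second projection $q\colon \ttFl{i}{a}{\Omega}\to\thF{i}{a}$, $(x,F)\mapsto F$, and to deduce every assertion from the facts that $\thF{i}{a}$ is a smooth, irreducible, projective $\tG{V}$-homogeneous space and that $q$ is a vector bundle over it.

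First I would verify that $\tG{V}$ acts transitively on $\thF{i}{a}$. The defining relation ${}^tg_i^{-1}=g_{\theta(i)}$ identifies $\tG{V}$ with $\prod_{[i]}GL(\mb{V}_i)$, where the product is over a set of representatives of the $\theta$-orbits on $I$; the component on $\mb{V}_{\theta(i)}$ is determined by the one on $\mb{V}_i$. A flag $F$ of type $(\mb{i},\mb{a})$ is determined by its graded pieces $\mb{F}^\ell_i\subset\mb{V}_i$ for such $i$, since the orthogonality $\mb{F}^{2m-\ell}=(\mb{F}^\ell)^\perp$ fixes the subspaces in $\mb{V}_{\theta(i)}$ via annihilators under the non-degenerate pairing $\mb{V}_i\times\mb{V}_{\theta(i)}\to\bb{C}$. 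The compatibilities $\theta(i_\ell)=i_{2m-\ell+1}$ and $a_\ell=a_{2m-\ell+1}$ make the two halves of the flag cohere, as an elementary dimension count confirms. Hence a flag of type $(\mb{i},\mb{a})$ in $\mb{V}$ is the same datum as an ordinary partial flag of prescribed type in each chosen $\mb{V}_i$; $GL(\mb{V}_i)$ acts transitively on such flags with parabolic stabilizer, so taking the product yields transitivity of $\tG{V}$ with parabolic stabilizer $P$. Therefore $\thF{i}{a}\cong\tG{V}/P$ is smooth, irreducible and projective.

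Next I would show $q$ is a $\tG{V}$-equivariant vector bundle. The fiber $q^{-1}(F)$ consists of those $x\in\tEO{V}{\Omega}$ satisfying the linear condition $x(\mb{F}^\ell,\mb{F}^{2m-\ell})=0$ for all $\ell$ (using the reformulation noted just above the lemma), hence it is a linear subspace of $\tEO{V}{\Omega}$. Since $\tG{V}$ acts transitively on the base and compatibly on the total space, all fibers have the same dimension and are mutually isomorphic; local sections of $\tG{V}\to\tG{V}/P$ then yield local trivializations, so $q$ is a $\tG{V}$-equivariant vector bundle. Consequently $\ttFl{i}{a}{\Omega}$ is smooth and irreducible.

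Finally, the first projection factors as
\[
\tpi{i}{a}\colon \ttFl{i}{a}{\Omega}\hookrightarrow\tEO{V}{\Omega}\times\thF{i}{a}\longrightarrow\tEO{V}{\Omega},
\]
a closed immersion followed by the projection from a product with projective second factor; hence $\tpi{i}{a}$ is projective. For equivariance, elements of $\tG{V}$ preserve the bilinear form (and therefore perpendiculars and flag types) and intertwine stability via $x\mapsto gxg^{-1}$, while $(\bb{C}^\times)^{\Omega,\theta}$ acts trivially on $\thF{i}{a}$ and clearly preserves the linear stability condition on each $x_h$, which yields $\tG{V}\times(\bb{C}^\times)^{\Omega,\theta}$-equivariance of $\tpi{i}{a}$. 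The only step that is not entirely formal is the transitivity of $\tG{V}$ on $\thF{i}{a}$, and even that reduces to the standard extension principle for partial flags in a single vector space, so I anticipate no serious obstacle.
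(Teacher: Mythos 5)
Your proof is correct. The paper declares this lemma ``obvious'' with no written argument, and your approach --- identifying $\tG{V}\cong\prod_{[i]}GL(\mb{V}_i)$ over $\theta$-orbit representatives so that $\thF{i}{a}$ is a $\tG{V}$-homogeneous projective variety, exhibiting $\ttFl{i}{a}{\Omega}\to\thF{i}{a}$ as a $\tG{V}$-equivariant vector bundle, and factoring $\tpi{i}{a}$ through the closed immersion into $\tEO{V}{\Omega}\times\thF{i}{a}$ --- is exactly the standard verification the authors had in mind, mirroring the analogous fact in Lusztig's non-$\theta$ setting.
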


\subsection{Perverse sheaves on $\tEO{V}{\Omega}$}
Let $\Omega$ be a $\theta$-orientation. By Lemma \ref{lem:sym-proper} and Lemma \ref{lem:perv},
\[
\tLL{i}{a}{\Omega}\colon =(\tpi{i}{a})_!(\mb{1}_{\ttFl{i}{a}{\Omega}})
\]
is a semisimple complex in $\ms{D}(\tEO{V}{\Omega})$.
\begin{dfn}
We define $\tmP{V}{\Omega}$ as the set of the isomorphism classes of simple perverse sheaves $L$ in $\ms{D}(\tEO{V}{\Omega})$ satisfying the property: $L$ appears in $\tLL{i}{a}{\Omega}[d]$ as a direct summand for some integer $d$ and $(\mb{i},\mb{a})$. We denote by $\tmQ{V}{\Omega}$ the full subcategory of $\ms{D}(\tEO{V}{\Omega})$ consisting of objects which are isomorphic to finite direct sums of $L[d]$ with $L \in \tmP{V}{\Omega}$ and $d \in \bb{Z}$. 
\end{dfn}
Note that any object in $\tmQ{V}{\Omega}$ is $\tG{V} \times (\bb{C}^\times)^{\Omega,\theta}$-equivariant. \\
\subsection{Multiplications and Restrictions}
Fix $\theta$-symmetric and $I$-graded vector spaces $\mb{V}$ and $\mb{W}$, and an $I$-graded vector space $\mb{T}$ such that $\wt(\mb{V})=\wt(\mb{W})+\wt(\mb{T})+\theta(\wt(\mb{T}))$. \\
\quad We consider the following diagram
\[
\xymatrix{
\EO{T}{\Omega} \times \tEO{W}{\Omega} & \tpE{\Omega}\ar[l]_(.3){p_1}\ar[r]^{p_2} & \tppE{\Omega}\ar[r]^{p_3} & \tEO{V}{\Omega}
}.
\]
Here $\tppE{\Omega}$ is the variety of $(x,V)$ where $x \in \tEO{V}{\Omega}$ and $V$ is an $x$-stable $I$-graded subspace of $\mb{V}$ such that $V \supset V^\bot$ and $\wt(\mb{V}/V)=\wt(\mb{T})$, and we denote by $\tpE{\Omega}$ the variety of $(x,V,\varphi^{\mb{W}},\varphi^{\mb{T}})$ where $(x,V) \in \tppE{\Omega}$, $\varphi^{\mb{W}}\colon \mb{W} \isoto V/V^{\bot}$ is an isomorphism of $\theta$-symmetric $I$-graded vector spaces and $\varphi^{\mb{T}}\colon \mb{T} \isoto \mb{V}/V$ is an isormorphism of $I$-graded vector spaces.
We define $p_1,p_2$ and $p_3$ by $p_1(x,V,\varphi^\mb{W},\varphi^\mb{T})=(x^\mb{T},x^\mb{W})$, $p_2(x,V,\varphi^\mb{W},\varphi^\mb{T})=(x,V)$ and $p_3(x,V)=x$. Here the morphism $x^\mb{W},x^\mb{T}$ are defined by 
\[
x^\mb{W}_h={\varphi^\mb{W}_{\vi(h)}}^{-1} \circ (x|_{V/V^\bot})_h \circ \varphi^\mb{W}_{\vo(h)}, \quad 
x^\mb{T}_h={\varphi^\mb{T}_{\vi(h)}}^{-1} \circ (x|_{\mb{V}/V})_h \circ \varphi^\mb{T}_{\vo(h)}.
\]
\quad Then $p_1$ is smooth with connected fibers, $p_2$ is a principal $\mb{G}_{\mb{T}} \times \tG{W}$-bundle and $p_3$ is projective.\\
\quad For a $\mb{G}_{\mb{T}}$-equivariant semisimple object $K_\mb{T} \in \ms{Q}_{\mb{T},\Omega}$ and a $\tG{W}$-equivariant semisimple object $K_\mb{W} \in \tmQ{W}{\Omega}$, there exists a unique semisimple object $K'' \in \ms{D}(\tppE{\Omega})$ satisfying $p_1^*(K_\mb{T} \boxtimes K_\mb{W})=p_2^*K''$.
\begin{dfn}
We define $K_\mb{T}*K_{\mb{W}}\colon =(p_3)_!(K'') \in\ms{D}(\tEO{V}{\Omega})$.
\end{dfn}
Next, we fix an $I$-graded vector space $U$ such that
\[
\mb{V} \supset U \supset U^{\bot} \supset \{0\}.
\]
We also fix an isomorphism $\mb{W}\cong U/U^\bot$ as $\theta$-symmetric $I$-graded vector spaces and an isomorphism $\mb{T}\cong \mb{V}/U$ as $I$-graded vector spaces. We consider the following diagram 
\[
\xymatrix{
\EO{T}{\Omega} \times \tEO{W}{\Omega} & \tEE{W,V}{\Omega}\ar[l]_(.45){p}\ar@{^{(}->}[r]^(.6){\iota} & \tEO{V}{\Omega}
}
\]
where 
\[
\tEE{W,V}{\Omega}=\{x \in \tEO{V}{\Omega} \ | \ \text{$U$ is $x$-stable}\}
\]
and $p(x)=(x^\mb{T},x^\mb{W})$, $\iota(x)=x$.
\begin{dfn}
For $K \in \ms{D}(\tEO{V}{\Omega})$, we define $\Res_{\mb{T},\mb{W}}(K)\colon =p_!\iota^*(K)$. 
\end{dfn}
\begin{prop}\label{prop-ef} \ Let $\mb{V}$ and $\mb{W}$ be $\theta$-symmetric $I$-graded vector spaces such that $\wt{\mb{V}}=\wt{\mb{W}}+\alpha_i+\alpha_{\theta(i)}$. For $a \in \bb{Z}_{\ge 0}$, let $\mb{S}_i^a$ be an $I$-graded vector space such that $\wt(\mb{S}_i^a)=a\alpha_i$.
\begin{enumerate}[{\rm(i)}]
\item Suppose $\tLL{i}{a}{\Omega} \in \ms{D}(\tEO{W}{\Omega})$. We have 
\[
\mb{1}_{\mb{S}_i^a}*\tLL{i}{a}{\Omega}=L_{(i,\mb{i},\theta(i)),(a,\mb{a},a)}.
\]
for $a \in \bb{Z}_{\ge 0}$. 
\item Suppose $\tLL{i}{a}{\Omega} \in \ms{D}(\tEO{V}{\Omega})$ and $a_\ell>0$ for all $\ell$ such that $i_\ell=i$. For $1 \le k \le 2m$ such that $i_k=i$, we define $\mb{a}^{(k)}=(a_1^{(k)}, \cdots ,a_{2m}^{(k)})$ by $a_\ell^{(k)}=a_\ell-\delta_{\ell,k}-\delta_{\ell,2m-k+1}$ and we set
\[
M_k(\mb{i},\mb{a}^{(k)})=
\sum_{i_\ell=i,\ell<k}a_\ell^{(k)}+\sum_{k<\ell,h \in \Omega;\vo(h)=i,\vi(h)=i_\ell}a_\ell^{(k)}.
\]
Then we have 
\[
\Res_{\mb{S}_i,\mb{W}}(\tLL{i}{a}{\Omega})=\bigoplus_{i_k=i}{}^\theta\!{L}_{\mb{i},\mb{a}^{(k)};\Omega}[-2 {M}_k(\mb{i},\mb{a}^{(k)})].
\]
\end{enumerate}
\end{prop}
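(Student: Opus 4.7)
The plan is to prove each part by unwinding the definitions of the convolution $*$ and the restriction $\Res$, identifying the relevant base change diagrams, and applying the semisimplicity criterion of Lemma \ref{lem:perv}.

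For part (i), I first note that $\mb{E}_{\mb{S}_i^a,\Omega}=\{\pt\}$: a $\theta$-quiver has no loops (arrows satisfy $\vo(h)\neq \vi(h)$), so no arrow in $\Omega$ has both endpoints at $i$, and $\mb{1}_{\mb{S}_i^a}$ is simply the constant sheaf on a point. Taking $\mb{T}=\mb{S}_i^a$ in the convolution diagram, descent along the principal $\mb{G}_{\mb{T}}\times \tG{W}$-bundle $p_2$ determines the auxiliary complex $K''$ from $p_1^*(\mb{1}\boxtimes \tLL{i}{a}{\Omega})$, and proper base change along $p_3$ reduces the statement to identifying the fibered product $\tpE{\Omega}\times_{\tEO{W}{\Omega}}\ttFl{i}{a}{\Omega}$ with $\ttFl{(i,\mb{i},\theta(i))}{(a,\mb{a},a)}{\Omega}$ compatibly with projections to $\tEO{V}{\Omega}$. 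The identification sends $(x,V,\varphi^{\mb{W}},\varphi^{\mb{T}},F_{\mb{W}})$ to the flag that prepends $V=\mb{F}^1$ and appends $V^{\bot}=\mb{F}^{2m+1}$ to the flag on $V/V^{\bot}$ transported from $F_{\mb{W}}$ via $\varphi^{\mb{W}}$. The defining symmetry $\mb{F}^{2m+1}=(\mb{F}^1)^{\bot}$ of $\thF{(i,\mb{i},\theta(i))}{(a,\mb{a},a)}$ is exactly the condition $V\supset V^{\bot}$ built into the convolution diagram.

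For part (ii), the approach is a $\theta$-symmetric analogue of Lusztig's restriction formula. Let $Z$ be the variety of pairs $(x,F)$ with $x\in \tEE{S_i,V}{\Omega}$ (so $U$ is $x$-stable) and $F$ an $x$-stable flag of type $(\mb{i},\mb{a})$. Proper base change applied to the Cartesian square pairing $\tpi{i}{a}$ with $\iota$ yields $\iota^*\tLL{i}{a}{\Omega}\cong \tilde{\pi}_!\mb{1}_Z$, where $\tilde{\pi}\colon Z\to \tEE{S_i,V}{\Omega}$ is the projection; composing with $p_!$ then gives $\Res_{\mb{S}_i,\mb{W}}(\tLL{i}{a}{\Omega})=(p\circ\tilde{\pi})_!\mb{1}_Z$. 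I next stratify $Z=\bigsqcup_k Z_k$ by the smallest index $k$ such that $\mb{F}^k\subset U$ but $\mb{F}^{k-1}\not\subset U$. Because $\mb{V}/U\cong \mb{S}_i$ is one-dimensional and concentrated in degree $i$, nonemptiness of $Z_k$ forces $i_k=i$; the flag symmetry $\mb{F}^{2m-\ell}=(\mb{F}^{\ell})^{\bot}$ together with $U^{\bot}\subset U$ then forces the flag to exit $U^{\bot}$ at position $2m-k+1$, so the induced flag on $U/U^{\bot}\cong \mb{W}$ is of type $(\mb{i},\mb{a}^{(k)})$. The forgetful map $Z_k\to \mb{E}_{\mb{S}_i,\Omega}\times \ttFl{i}{a^{(k)}}{\Omega}$ should be an affine bundle whose fiber dimension records the freedom in lifting the flag on $\mb{W}$ back to $\mb{V}$: the choice of a one-dimensional supplement at step $k$ in degree $i$, plus the components of $x_h$ for arrows $h$ with $\vo(h)=i$ and $\vi(h)=i_\ell$ for $\ell>k$; by $\theta$-symmetry the mirror contributions reappear near position $2m-k+1$, so that the total rank equals exactly $2M_k(\mb{i},\mb{a}^{(k)})$. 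Ordering the strata by $k$ gives a filtration by closed unions to which Lemma \ref{lem:perv}(2) applies, producing the desired direct sum decomposition with shift $[-2M_k(\mb{i},\mb{a}^{(k)})]$.

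The principal obstacle is the explicit dimension count in part (ii), especially verifying the doubling factor $2$ that distinguishes this formula from the asymmetric case in Lusztig's theory. This factor traces back to the $\theta$-symmetry forcing simultaneous control over the $k$-th and $(2m-k+1)$-st steps of the flag, contributing mirrored affine-bundle directions. I would verify it by directly computing $\dim Z_k$ and $\dim\ttFl{i}{a^{(k)}}{\Omega}$ from the explicit flag-variety descriptions, using that the skew-symmetric condition on $x$ couples the $h$ and $\theta(h)$ components so that choices over the two symmetric halves of the flag filtration are genuinely independent and their contributions add.
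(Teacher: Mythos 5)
Your overall architecture matches the paper's: for (i), descent along $p_2$ and proper base change along $p_3$ reduce the convolution to a projective morphism from a flag variety; for (ii), proper base change gives $\iota^*\tLL{i}{a}{\Omega}=\tilde{\pi}_!\mb{1}_Z$, one stratifies $Z$ by where the flag enters $U$ (forcing $i_k=i$), and Lemma \ref{lem:perv}(2) applied to the affine bundles over $\ttFl{i}{a^{(k)}}{\Omega}$ gives the decomposition. Your explanation of why the flag symmetry forces exit from $U^\bot$ at position $2m-k+1$ is also correct.

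However, there is a genuine error in part (ii) concerning the dimension count and the provenance of the factor $2$. You assert that the forgetful map $Z_k\to \ttFl{i}{a^{(k)}}{\Omega}$ is an affine bundle of rank $2M_k(\mb{i},\mb{a}^{(k)})$, arguing that "by $\theta$-symmetry the mirror contributions reappear near position $2m-k+1$" and so double the count. This has the $\theta$-constraint exactly backwards. The skew-symmetry condition $x_{\theta(h)}=-{}^tx_h$ means that once $x_h$ is chosen for an arrow $h$ with $\vo(h)=i$, the value $x_{\theta(h)}$ (and hence the behavior near step $2m-k+1$) is \emph{determined}, not free; and for the self-$\theta$-fixed arrows with $\vi(h)=\theta(i)$, the condition $x_h=-{}^tx_h$ imposes that $x_h$ is a skew-symmetric form, again \emph{removing} degrees of freedom rather than adding them. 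The affine bundle therefore has rank $M_k(\mb{i},\mb{a}^{(k)})$, exactly as the paper proves: the terms $\sum_{i_\ell=i,\ell<k}a_\ell^{(k)}$ come from the choice of the one-dimensional supplement in $\mb{V}_i$ at step $k$, and the terms $\sum_{k<\ell,\,h\in\Omega,\vo(h)=i,\vi(h)=i_\ell}a_\ell^{(k)}$ come from the arrows with tail $i$, counted once (the paper splits this into $\vi(h)\neq\theta(i)$ and $\vi(h)=\theta(i)$ and handles the skew-symmetric parametrization explicitly). The factor $2$ in the shift $[-2M_k]$ is not a feature of the $\theta$-symmetric setting at all; it is the standard cohomological shift $f_!\mb{1}_{\mb{A}^r\text{-bundle}}\cong\mb{1}[-2r]$ for a complex affine bundle of rank $r$, and it appears identically in Lusztig's non-symmetric restriction formula. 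If your "rank $2M_k$" were inserted into the correct affine-bundle formula, the shift would come out as $[-4M_k]$, which is wrong; so either your dimension count is off by a factor of two, or you are implicitly conflating the real fiber dimension with the complex rank. Either way, the "doubling by symmetry" heuristic should be replaced by the explicit parametrization the paper carries out.

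A smaller point in part (i): the fiber product $\tpE{\Omega}\times_{\tEO{W}{\Omega}}\ttFl{i}{a}{\Omega}$ still carries the extra framing data $\varphi^{\mb{W}}$ (and $\varphi^{\mb{T}}$), so it is not "identified with" $\ttFl{(i,\mb{i},\theta(i))}{(a,\mb{a},a)}{\Omega}$; it is a principal $\mb{G}_{\mb{T}}\times\tG{W}$-torsor over it, fibered compatibly with $p_2$. This torsor structure is exactly what makes the descent step work, so the distinction matters: you are not base-changing along an isomorphism but descending along the principal bundle.
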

\begin{proof}
(1) \ We consider the following diagram:
\[
\xymatrix{
\ttFl{i}{a}{\Omega}\ar[d]_{\tpi{i}{a}}\ar@{}[dr]|\Box & {}^\theta\!\tilde{E}\ar[d]_{\rho'}\ar[l]_{p'_1}\ar[r]^(.3){p'_2}\ar@{}[dr]|\Box  & {}^\theta\!\widetilde{\cal{F}}_{(i,\mb{i},\theta(i)),(a,\mb{a},a);\Omega}\ar[d]_{\rho''}\ar[dr]^{\pi_{(i,\mb{i},\theta(i)),(a,\mb{a},a)}} & \\
\tEO{W}{\Omega} & \tpE{\Omega}\ar[l]_(.4){p_1}\ar[r]^{p_2} & \tppE{\Omega}\ar[r]^{p_3} & \tEO{V}{\Omega}
}
\]
where 
\[
{}^\theta\!\tilde{E}\colon =\{(x,F,\varphi^\mb{W}) \ | \ (x,F) \in {}^\theta\widetilde{\cal{F}}_{(i,\mb{i},\theta(i)),(a,\mb{a},a)}, \varphi^\mb{W}\colon \mb{W} \cong \mb{F}^1/\mb{F}^{2m+1}\}.
\]
Here, $\rho''\colon {}^\theta\!\widetilde{\cal{F}}_{(i,\mb{i},\theta(i)),(a,\mb{a},a)} \to \tppE{\Omega}$ is given by $(x,F) \mapsto (x,\mb{F}^1)$. Then $\rho''$ is projective and $p_3 \circ \rho''={}^\theta\pi_{(i,\mb{i},\theta(i)),(a,\mb{a},a)}$. Hence $\rho''_!(\mb{1}_{{}^\theta\!\widetilde{\cal{F}}_{(i,\mb{i},\theta(i)),(a,\mb{a},a)}})$ is semisimple and ${}^\theta{L}_{(i,\mb{i},\theta(i)),(a,\mb{a},a);\Omega}=(\pi_{(i,\mb{i},\theta(i)),(a,\mb{a},a)})_!(\mb{1}_{{}^\theta\!\widetilde{\cal{F}}_{(i,\mb{i},\theta(i)),(a,\mb{a},a)}})=(p_3)_!(\rho'')_!(\mb{1}_{{}^\theta\!\widetilde{\cal{F}}_{(i,\mb{i},\theta(i)),(a,\mb{a},a)}})$. 
On the other hand, we have
\[
p_2^*(\rho''_!\mb{1}_{{}^\theta\!\widetilde{\cal{F}}_{(i,\mb{i},\theta(i)),(a,\mb{a},a)}})=\rho'_!(p'_2)^*\mb{1}_{{}^\theta\!\widetilde{\cal{F}}_{(i,\mb{i},\theta(i)),(a,\mb{a},a)}}=\rho'_!(p'_1)^*\mb{1}_{\ttFl{i}{a}{\Omega}}=p_1^*(\tpi{i}{a})_!\mb{1}_{\ttFl{i}{a}{\Omega}}=p_1^*(\tLL{i}{a}{\Omega}).
\]
Hence we have $\mb{1}_{\mb{S}_i^a}*\tLL{i}{a}{\Omega}=(p_3)_!\rho''_!(\mb{1}_{{}^\theta\!\widetilde{\cal{F}}_{(i,\mb{i},\theta(i)),(a,\mb{a},a)}})={}^\theta\!L_{(i,\mb{i},\theta(i)),(a,\mb{a},a)}$. \\
(2) \ Set ${}^\theta\!\widetilde{\cal{F}}(\mb{W},\mb{V})=\{(x,F) \in \ttFl{i}{a}{\Omega} \ | \ \text{$U$ is $x$-stable}\}$ and ${}^\theta\!\mathcal{F}_{\mb{i},\mb{a};\Omega}^{(k)}=\{F \in {}^\theta\!\mathcal{F}_{\mb{i},\mb{a};\Omega} \ | \ \mb{F}^k \subset U, \mb{F}^{k-1} \not\subset U\}$. We define
\[
{}^\theta\!\widetilde{\cal{F}}_k(\mb{W},\mb{V})\colon =
\{(x,F)\in {}^\theta\!\widetilde{\cal{F}}(\mb{W},\mb{V}) \ | \ F \in {}^\theta\!\mathcal{F}_{\mb{i},\mb{a};\Omega}^{(k)}\}.
\]
Then the locally closed smooth subvarieties ${}^\theta\!\widetilde{\cal{F}}_k(\mb{W},\mb{V}) \ (1 \le k \le 2m, i_k=i)$ give a partition ${}^\theta\!\widetilde{\cal{F}}(\mb{W},\mb{V})$.  \\
\quad For a flag $F$ of $\mb{V}$, we define the flag $F|_{U/U^{\bot}}$ by 
\[
F|_{U/U^\bot}=(U/U^\bot=(\mb{F}^0 \cap U)/(\mb{F}^0 \cap U^\bot) \supset \cdots \supset (\mb{F}^{2m} \cap U)/(\mb{F}^{2m} \cap U^\bot)=\{0\}).
\]
Note that for $(x,F) \in {}^\theta\!\widetilde{\cal{F}}_k(\mb{W},\mb{V})$,
\begin{eqnarray*}
\dim(\mb{F}^\ell_j \cap U_j)&=&\dim\mb{F}^{\ell}_j-\delta(j=i,\ell<k), \\
\dim(\mb{F}^\ell_j \cap (U^\bot)_j)&=&\delta(2m-\ell \ge k,j=\theta(i)).
\end{eqnarray*}
We have
\[
\dim((F|_{U/U^\bot})^{\ell-1}/(F|_{U/U^\bot})^\ell)_j=\dim(\mb{F}^{\ell-1}/\mb{F}^{\ell})_j-\delta(j=i,\ell=k)-\delta(j=\theta(i),2m-\ell=k-1).
\]
Hence the flag $F|_{U/U^\bot}$ is a flag of type $(\mb{i},\mb{a}^{(k)})$. Therefore $(x,F) \mapsto (x|_{U/U^\bot},F|_{U/U^\bot})$ defines $f_{\mb{a}^{(k)}}\colon {}^\theta\!\widetilde{\cal{F}}_k(\mb{W},\mb{V}) \to {}^\theta\!\widetilde{\cal{F}}_{\mb{i},\mb{a}^{(k)},\Omega}$. We obtain the following diagram: 
\[
\xymatrix{
{}^\theta\!\widetilde{\cal{F}}_{\mb{i},\mb{a}^{(k)},\Omega}\ar[d]_{{}^\theta\!\pi_{\mb{i},\mb{a}^{(k)}}} & {}^\theta\!\widetilde{\cal{F}}_k(\mb{W},\mb{V})\ar[l]_{f_{\mb{a}^{(k)}}}\ar@{^{(}->}[r] & {}^\theta\!\widetilde{\cal{F}}(\mb{W},\mb{V})\ar@{^{(}->}[r]\ar[d] & \ttFl{i}{a}{\Omega}\ar[d]^{\tpi{i}{a}} \\
\tEO{W}{\Omega} & & \tEE{W,V}{\Omega}\ar[ll]_(.4){p}\ar@{^{(}->}[r]^(.6){\iota} & \tEO{V}{\Omega}
}
\]
\begin{cl*}
The morphism $f_{\mb{a}^{(k)}}$ is an affine bundle of rank $M_k(\mb{i},\mb{a}^{(k)})$.
\end{cl*}
\begin{proof}
Fix $(x_\mb{W},F_\mb{W}) \in {}^\theta\!\widetilde{\cal{F}}_{\mb{i},\mb{a}^{(k)},\Omega}$. Note that $(U^\bot)_j=\{0\}$ and $U_j \cong \mb{W}_j$ for $j \neq \theta(i)$. If $F \in {}^\theta\!\mathcal{F}_{\mb{i},\mb{a};\Omega}^{(k)}$ satisfies $F|_{U/U^\bot}=F_\mb{W}$, we have
\begin{eqnarray*}
\mb{F}^\ell_i=\mb{F}^\ell_{\mb{W},i} \ (\ell \ge k), \quad 
\mb{F}^\ell_i=\mb{F}^\ell_{\mb{W},i}+\mb{F}^{k-1}_i \ (\ell<k), \quad
\mb{F}^\ell_{\theta(i)}=(\mb{F}_i^{2m-k+1})^\bot
\end{eqnarray*}
and $\mb{F}^\ell_j=\mb{F}^\ell_{\mb{W},j} \ (j \neq i,\theta(i))$. A subspace $\mb{F}^{k-1}_i$ is parametrized by a one-dimensional subspace $\mb{F}^{k-1}_i/\mb{F}^{k-1}_{\mb{W},i} \subset \mb{V}_i/\mb{F}^{k-1}_{\mb{W},i}$ such that $\mb{F}^{k-1}_i/\mb{F}^{k-1}_{\mb{W},i} \not\subset U_i/\mb{F}^{k-1}_{\mb{W},i}$. 
Hence the fibers of ${}^\theta\!{\cal{F}}_{\mb{i},\mb{a},\Omega}^{(k)} \to {}^\theta\!{\cal{F}}_{\mb{i},\mb{a}^{(k)},\Omega}\colon F \mapsto F|_{U/U^\bot}$ at $F_\mb{W}$ is isomorphic to $\bb{A}^{\dim(\mb{V}_i/\mb{F}^{k-1}_{\mb{W},i})-1}$. Note that 
\[
\dim(\mb{V}_i/\mb{F}^{k-1}_{\mb{W},i})-1=\sum_{\ell<k,i_\ell=i}a_\ell=\sum_{\ell<k,i_\ell=i}a_\ell^{(k)}.
\]
\quad Fix a flag $F \in {}^\theta\!\mathcal{F}_{\mb{i},\mb{a};\Omega}^{(k)}$ such that $F|_{U/U^\bot}=F_\mb{W}$. Note that $\mb{V}_i \supset U_i \cong \mb{W}_i$, $\mb{V}_{\theta(i)}=U_{\theta(i)}$ and $\mb{V}_{j}=U_{j} \cong \mb{W}_{j}$ for $j \neq i,\theta(i)$. Assume that $x \in \tEO{V}{\Omega}$ satisfies the condition that $F$ is $x$-stable and $x|_{U/U^\bot}=x_\mb{W}$. \\
\quad First, suppose that $h \in \Omega$ satisfies $\vo(h) \neq i$ and $\vi(h) \neq \theta(i)$. Then $x_h$ coincides with the composition $\mb{V}_{\vo(h)} \twoheadrightarrow U_{\vo(h)}/(U^\bot)_{\vo(h)} \cong \mb{W}_{\vo(h)} \stackrel{x_{\mb{W},h}}{\longrightarrow} \mb{W}_{\vi(h)} \cong U_{\vi(h)} \subseteq \mb{V}_{\vi(h)}$. Hence, for such an $h \in \Omega$, $x_h$ is uniquely determined by $x_{\mb{W}}$ and $x$ stabilizes the flag $F$. \\
\quad Second, suppose that $h \in \Omega$ satisfies $\vo(h)=i$. Take $v \in \mb{F}_i^{k-1}$ such that $v \notin U_i$. \\
\quad If $\vi(h) \neq \theta(i)$, $x_h$ is parametrised by $x_h(v) \in \mb{F}_{\vi(h)}^{k-1}$. Note that
\[
\dim\mb{F}_{\vi(h)}^{k-1}=\sum_{\ell \ge k,i_\ell=\vi(h)}a_\ell=\sum_{\ell>k,i_\ell=\vi(h)}a_\ell^{(k)},
\]
because $\vi(h) \neq i,\theta(i)$, $i_k=i$ and $\ell \neq k, 2m-k+1$. \\
\quad If $\vo(h)=i$ and $\vi(h)=\theta(i)$, we can regard $x_h$ as a skew-symmetric form on $\mb{V}_i$. Since $\mb{F}^\ell_i=\mb{F}^\ell_{\mb{W},i}+\delta(\ell<k)\bb{C}v$, the skew-symmetric condition on $x$ is equal to the condition $x(v,\mb{F}^{2m-k+1}_i+\bb{C}v)=0$. Then $x_h$ is parametrized by $\left(\mb{V}_i/(\mb{F}_i^{2m-k+1}+\bb{C}v)\right)^*$. Since $v \notin \mb{F}^{2m-k+1}_i$ if and only if $2m-k+1 \ge k$, we have 
\begin{eqnarray*}
&{}&\dim\left(\mb{V}_i/(\mb{F}_i^{2m-k+1}+\bb{C}v)\right)^*=\dim\left(\mb{V}/\mb{F}_i^{2m-k+1}\right)^*-\delta(2m-k+1 \ge k) \\
&=&\left(\dim{F_{\theta(i)}^{k-1}}\right)-\delta(2m-k+1 \ge k)=\left(\sum_{\ell \ge k,i_\ell=\theta(i)}a_\ell\right)-\delta(2m-k+1 \ge k).
\end{eqnarray*}
Since $i_k=i \neq \theta(i)$, $i_{2m-k+1}=\theta(i)$, we have $a_\ell=a_{\ell}^{(k)}+\delta(\ell=2m-k+1)$ if $i_\ell=\theta(i)$. Thus we obtain 
\[
\dim\left(\mb{V}_i/(\mb{F}_i^{2m-k+1}+\bb{C}v)\right)=\sum_{\ell>k,i_\ell=\theta(i)}a_\ell^{(k)}.
\]
\quad Set
\[
\Omega_0:=\{h \in \Omega \ | \ \vo(h)=i,\vi(h)=\theta(i)\}, \quad 
\Omega_1:=\{h \in \Omega \ | \ \vo(h)=i,\vi(h) \neq \theta(i)\}.
\]
\quad The morphism $\widetilde{\cal{F}}_k(\mb{W},\mb{V}) \to \{F \in {}^\theta\!{\mathcal{F}}_{\mb{i},\mb{a},\Omega}^{(k)} \ | \ F|_{U/U^\bot}=F_{\mb{W}}\}$ is an affine bundle and its fiber dimension is equal to 
\begin{eqnarray*}
&{}&\sum_{h \in \Omega_1}\dim(\mb{F}_{\vi(h)}^{k-1})+\sum_{h \in \Omega_0}\dim\{\mb{V}_i/(\mb{F}_i^{2m-k+1}+\bb{C}v)\} \\
&=&\sum_{h \in \Omega_1, \ell>k,i_\ell \neq \theta(i)}a_\ell^{(k)}+\sum_{h \in \Omega_0, \ell>k,i_\ell=\theta(i)}a_\ell^{(k)}=\sum_{h \in \Omega_0 \sqcup \Omega_1,\ell>k}a_\ell^{(k)}.
\end{eqnarray*}

\quad Thus the rank of $f_{\mb{a}^{(k)}}$ is equal to 
\begin{eqnarray*}
\dim(\mb{V}_i/\mb{F}^{k-1}_{\mb{W},i})-1+\sum_{h \in \Omega_0 \sqcup \Omega_1,\ell>k}a_\ell^{(k)}=\sum_{i_\ell=i,\ell<k}a_\ell^{(k)}+\sum_{h \in \Omega_0 \sqcup \Omega_1,k<\ell}a_\ell^{(k)}=M_k(\mb{i},\mb{a}^{(k)}).
\end{eqnarray*}
\end{proof}
By this claim, we have $(f_{\mb{a}^{(k)}})_!\mb{1}_{{}^\theta\!\widetilde{\mathcal{F}}_k(\mb{W},\mb{V})}=\mb{1}_{{}^\theta\!\widetilde{\cal{F}}_{\mb{i},\mb{a}^{(k)},\Omega}}[-2 {M}_k(\mb{i},\mb{a}^{(k)})]$. By Lemma \ref{lem:perv}(2), we obtain
\begin{eqnarray*}
\Res_{\mb{S}_i,\mb{W}}(\tLL{i}{a}{\Omega})&=&({}^\theta\!\pi_{\mb{i},\mb{a};\Omega})_!\mb{1}_{{}^\theta\!\widetilde{\mathcal{F}}_{\mb{i},\mb{a};\Omega}}=\bigoplus_{k}({}^\theta\!\pi_{\mb{i},\mb{a}^{(k)}})_!(f_{\mb{a}^{(k)}})_!\mb{1}_{{}^\theta\!\widetilde{\mathcal{F}}_k(\mb{W},\mb{V})} \\
&=&\bigoplus_{i_k=i}{}^\theta\!{L}_{\mb{i},\mb{a}^{(k)};\Omega}[-2 {M}_k(\mb{i},\mb{a}^{(k)})].
\end{eqnarray*}
\end{proof}

\begin{lem}\label{ass}
Let $\mb{T}^1$ and $\mb{T}^2$ be $I$-graded vector spaces. Let $\mb{W}$ and $\mb{V}$ be $\theta$-symmetric $I$-graded vector spaces such that $\wt{\mb{V}}=\wt{\mb{T}^1}+\theta(\wt{\mb{T}^1})+\wt{\mb{T}^2}+\theta(\wt{\mb{T}^2})+\wt{\mb{W}}$. \\
\quad For $\mb{G}_{\mb{T}^j}$-equivariant semisimple objects $L_j \in \ms{D}(\mb{E}_{\mb{T}^j,\Omega}) \ (j=1,2)$ and a $\tG{W}$-equivariant semisimple obejct $L \in \ms{D}(\tEO{W}{\Omega})$, we have $(L_1*L_2)*L \cong L_1*(L_2*L)$.
\end{lem}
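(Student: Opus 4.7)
The plan is to imitate Lusztig's proof of associativity in the ordinary quiver setting, but carrying the skew-symmetric/orthogonal constraints through. The idea is to realize both iterated products as pushforwards from a common ``triple'' space parametrizing a two-step filtration of $\mb{V}$, compatible with the $\theta$-symmetric structure.

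Concretely, I would introduce the variety $\tpE{\Omega}^{(1,2)}$ of tuples $(x,V_1,V_2,\varphi^{\mb{T}^1},\varphi^{\mb{T}^2},\varphi^{\mb{W}})$ where $x\in\tEO{V}{\Omega}$, $V_1,V_2$ are $x$-stable $I$-graded subspaces with
\[
\mb{V}\supset V_1\supset V_2\supset V_2^{\bot}\supset V_1^{\bot},
\]
and $\varphi^{\mb{T}^1}\cl\mb{T}^1\isoto\mb{V}/V_1$, $\varphi^{\mb{T}^2}\cl\mb{T}^2\isoto V_1/V_2$ are isomorphisms of $I$-graded vector spaces while $\varphi^{\mb{W}}\cl\mb{W}\isoto V_2/V_2^{\bot}$ is an isomorphism of $\theta$-symmetric $I$-graded vector spaces. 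The identity $(V_1^\bot)_{\theta(i)}\cong(\mb{V}_i/V_{1,i})^*$ and similarly for $V_2$ make this dimensionally consistent with the weight hypothesis on $\mb{V}$. Parallel to the definition of $\tpE{\Omega}$, there is a natural projection
\[
q_1\cl\tpE{\Omega}^{(1,2)}\longrightarrow\EO{T^1}{\Omega}\times\EO{T^2}{\Omega}\times\tEO{W}{\Omega}
\]
sending the datum to the induced triple $(x^{\mb{T}^1},x^{\mb{T}^2},x^{\mb{W}})$; one checks $q_1$ is smooth with connected fibers. There is also a principal $\mb{G}_{\mb{T}^1}\times\mb{G}_{\mb{T}^2}\times\tG{W}$-bundle
\[
q_2\cl\tpE{\Omega}^{(1,2)}\longrightarrow\tppE{\Omega}^{(1,2)},
\]
where $\tppE{\Omega}^{(1,2)}$ is the variety of $(x,V_1,V_2)$ alone, and a projective map $q_3\cl\tppE{\Omega}^{(1,2)}\to\tEO{V}{\Omega}$ forgetting the filtration.

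Next I would factor each of the two associativity patterns through this diagram. For $(L_1*L_2)*L$ one can either first use Lusztig's associativity to identify $L_1*L_2$ (on $\EO{T}{\Omega}$ with $\wt{\mb{T}}=\wt{\mb{T}^1}+\wt{\mb{T}^2}$) with a pushforward from the ordinary two-step flag variety, then apply the $\theta$-product; this exhibits $(L_1*L_2)*L$ as a pushforward along $q_3$ of a semisimple complex $K''$ on $\tppE{\Omega}^{(1,2)}$ characterized by $q_2^*K''\cong q_1^*(L_1\boxtimes L_2\boxtimes L)$. Symmetrically, grouping the right two factors first and using the $\theta$-symmetric definition of $*$ exhibits $L_1*(L_2*L)$ as the pushforward along $q_3$ of the same semisimple complex $K''$ determined by $q_1^*(L_1\boxtimes L_2\boxtimes L)$. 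Because $q_2$ is a principal bundle with connected structure group, Lemma \ref{lem-pv}(ii) guarantees that $K''$ is uniquely determined by $q_1^*(L_1\boxtimes L_2\boxtimes L)$, so both sides agree.

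The core of the argument is a commutative-diagram chase expressing each iterated product as $q_{3!}K''$ via intermediate ``partial'' varieties (fixing $V_1$ then $V_2$, respectively fixing $V_2$ then $V_1$), and invoking proper base change together with Lemma \ref{lem-pv}(ii) at each step. The potential obstacle is bookkeeping the four spaces and the three nesting constraints $V_1\supset V_2\supset V_2^{\bot}\supset V_1^{\bot}$ simultaneously: one must check that the squares linking $\tpE{\Omega}^{(1,2)}$ to the iterated $\tpE{\Omega}$'s are Cartesian and that the orthogonality condition $V_2\supset V_2^{\bot}$ in the inner product $L_2*L$ is compatible with $V_1\supset V_1^{\bot}$ in the outer one. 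Once the nesting $V_1\supset V_2\supset V_2^{\bot}\supset V_1^{\bot}$ is set up, however, these compatibilities are automatic from $(V_1/V_2)\cong(V_2^{\bot}/V_1^{\bot})^*$ (in degree $\theta(\cdot)$) and the standard identifications of $\theta$-symmetric subquotients, reducing the remaining work to a routine repetition of Lusztig's classical associativity argument in the $\theta$-equivariant setting.
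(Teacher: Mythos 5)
Your strategy matches the paper's proof: introduce a two-step $\theta$-compatible flag variety with the nesting $\mb{V}\supset V_1\supset V_2\supset V_2^\bot\supset V_1^\bot$ (the paper's $F^1\supset F^2\supset F^3\supset F^4$), realize both iterated products as pushforwards along the projective map from this space via proper base change and the uniqueness of descent along the principal $\mb{G}_{\mb{T}^1}\times\mb{G}_{\mb{T}^2}\times\tG{W}$-bundle (Lemma \ref{lem-pv}(ii)). The paper's proof carries out the diagram chase you defer to as ``routine'' with an explicit nine-vertex commutative diagram, but the structure is the same; your slimmer choice of only three framing isomorphisms on the triple flag space (rather than the paper's redundant five) is a minor bookkeeping simplification, not a different method.
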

\begin{proof}
Let $\mb{T}^{12}$ be an $I$-graded vector space such that $\wt{\mb{T}^{12}}=\wt{\mb{T}^1}+\wt{\mb{T}^2}$. Let $\mb{W}^2$ be a $\theta$-symmetric $I$-graded vector space such that $\wt{\mb{W}^2}=\wt{\mb{T}^2}+\theta(\wt{\mb{T}^2})+\wt{\mb{W}}$. \\
\quad We denote by $\ms{F}$ the variety of pairs $(x,F)$ where $x \in \tEO{V}{\Omega}$ and $F=(\mb{V} \supset F^1 \supset F^2 \supset F^3 \supset F^4 \supset \{0\}$ is an $x$-stable flag such that $F^3=(F^2)^\bot,F^4=(F^1)^\bot$, $F^1/F^4 \cong \mb{W}^2$ and $F^2/F^3 \cong \mb{W}$ as $\theta$-symmetric $I$-graded vector spaces. Let $\tilde{\ms{F}}$ be the variety of pairs $(x,F,\varphi_{\mb{W}},\varphi_{\mb{W}^2},\varphi_1,\varphi_2,\varphi_{\mb{T}^2})$ where $(x,F) \in \ms{F}$ and $\varphi_{\mb{W}^2}\colon F^1/F^4 \cong \mb{W}^2$, $\varphi_{\mb{W}}\colon F^2/F^3 \cong \mb{W}$ as $\theta$-symmetric $I$-graded vector spaces, and $\varphi_1\colon \mb{V}/F^1 \cong \mb{T}^1$, $\varphi_2\colon \mb{V}/F^2 \cong \mb{T}^{12}$ and $\varphi_{\mb{T}^2}\colon F^1/F^2 \cong \mb{T}^2$ as $I$-graded vector spaces. \\
\quad We consider the folowing diagram:
\[
\xymatrix{
\mb{E}_{\mb{T}^1,\Omega} \times \mb{E}_{\mb{T}^2,\Omega} \times \tEO{W}{\Omega} & \tilde{\ms{F}}\ar[l]_(.2){u_1}\ar[r]^{u_2} & \ms{F}\ar[r]^{u_3} & \tEO{V}{\Omega}.
}
\]
Here $u_1(x,F,\varphi_{\mb{W}},\varphi_{\mb{W}^2},\varphi_1,\varphi_2,\varphi_{\mb{T}^2})=(x^1,x^2,x_{\mb{W}})$, where $x_{\mb{W}},x^1$ and $x^2$ are the restrictions of $x$ to $\mb{W},\mb{T}^1$ and $\mb{T}^2$ through the isomorphism $\varphi_{\mb{W}},\varphi_1$ and $\varphi_2$ respectively, and $u_2$ and $u_3$ are natural projections. Note that $u_1$ is smooth with connected fibers, $u_2$ is a principal $\mb{G}_{\mb{T}^1} \times \mb{G}_{\mb{T}^2} \times \tG{W}$-bundle and $u_3$ is projective. Then, for $L \in \tmP{W}{\Omega}$, there exists a unique semisimple object $L'' \in \ms{D}(\ms{F})$ such that $u_1^*(L_1 \boxtimes L_2 \boxtimes L)=u_2^*L''$, we define $K$ by $(u_3)_!L''$. We shall prove $K \cong L_1*(L_2*L)$ and $K \cong (L_1*L_2)*L$. \\
\quad First, $L_2*L$ is defined by the following diagram
\[
\xymatrix{
\mb{E}_{\mb{T}^2,\Omega} \times \tEO{W}{\Omega} & E'_2\ar[l]_(.25){q_1}\ar[r]^{q_2} & E''_2 \ar[r]^{q_3} & \tEO{W^{\rm 2}}{\Omega}.
}
\]
Here $E''_2$ is the variety of $(y,V)$ where $y \in \tEO{W^{\rm 2}}{\Omega}$ and $V$ is an $y$-stable $I$-graded vector subspace of $\mb{W}^2$ such that $V \supset V^\bot$ and $\wt(\mb{W}^2/V)=\wt(\mb{T}^2)$, and $E'_2$ is the variety of $(y,V,\psi_\mb{W},\psi_{\mb{T}^2})$ where $(y,V) \in E''_2$ and $\psi_\mb{W}:V/V^\bot \cong \mb{W}$ and $\psi_{\mb{T}^2}:\mb{W}^2/V \cong \mb{T}^2$. For $L''_2 \in \ms{D}(E'_2)$ such that $q_1^*(L_2 \boxtimes L)=q_2^*L''_2$, we have $(q_3)_!L''_2=L_2*L$. We consider the diagram
\[
\xymatrix{
\mb{E}_{\mb{T}^1,\Omega} \times \mb{E}_{\mb{T}^2,\Omega} \times \tEO{W}{\Omega} & \mb{E}_{\mb{T}^1,\Omega} \times E'_2\ar[l]_(.35){\tilde{q}_1}\ar[r]^{\tilde{q}_2} & \mb{E}_{\mb{T}^1,\Omega} \times E''_2 \ar[r]^(.4){\tilde{q}_3} & \mb{E}_{\mb{T}^1,\Omega} \times \tEO{W^{\rm 2}}{\Omega},
}
\]
and denote by $L''_1\colon =L_1 \boxtimes L''_2 \in \ms{D}(\mb{E}_{\mb{T}^1,\Omega} \times E''_2)$. Then $\tilde{q}_1^*(L_1 \boxtimes L_2 \boxtimes L)=\tilde{q}_2^*L''_1$ and $(\tilde{q}_3)_!L''_1=L_1 \boxtimes (L_2*L)$. \\
\quad Second, $L_1*(L_2*L)$ is defined by the following diagram:
\[
\xymatrix{
\mb{E}_{\mb{T}^1,\Omega} \times \tEO{W^2}{\Omega} & E'\ar[l]_(.25){p_1}\ar[r]^{p_2} & E'' \ar[r]^(.4){p_3} & \tEO{V}{\Omega}.
}
\]
Here $E''$ is the variety of $(y,V)$ where $y \in \tEO{V}{\Omega}$ and $V$ is an $y$-stable $I$-graded vector subspace of $\mb{V}$ such that $V \supset V^\bot$ and $\wt(\mb{V}/V)=\wt(\mb{T}^1)$, and $E'$ is the variety of $(y,V,\psi_{\mb{W}^2},\psi_{\mb{T}^1})$ where $(y,V) \in E''$ and $\psi_{\mb{W}^2}:V/V^\bot \cong \mb{W}^2$ and $\psi_{\mb{T}^1}:\mb{V}/V \cong \mb{T}^1$. For $K'' \in \ms{D}(E'')$ such that $p_1^*(L_1 \boxtimes (L_2*L))=p_2^*K''$, we have $L_1*(L_2*L)=(p_3)_!K''$. \\
\quad Set $E'_1=\mb{E}_{\mb{T}^1,\Omega} \times E'_2$, $E''_1=\mb{E}_{\mb{T}^1,\Omega} \times E''_2$, $E_{12}=\mb{E}_{\mb{T}^1,\Omega} \times \mb{E}_{\mb{T}^2,\Omega} \times \tEO{W}{\Omega}$ and $E_2=\mb{E}_{\mb{T}^1,\Omega} \times \tEO{W^{\rm 2}}{\Omega}$. We consider the following diagram:
\[
\xymatrix{
 & & & E_2 & & & \\
 & & E''_1\ar[ur]^{\tilde{q}_3} & & E'\ar[ul]_{p_1}\ar[dr]^{p_2} & & \\
 & E'_1\ar[dl]_{\tilde{q}_1}\ar[ur]^{\tilde{q}_2} & & \tilde{E}\ar[ur]^{t_3}\ar[ul]_{r_1}\ar[dr]^{r_2}\ar@{}[rr]|\Box\ar@{}[uu]|\Box & & E''\ar[dr]^{p_3} & \\
E_{12} & & \tilde{\ms{F}}\ar[ll]^{u_1}\ar[rr]_{u_2}\ar[ul]_{v_1}\ar[ur]^{t_2} & & \ms{F}\ar[rr]_{u_3}\ar[ur]^{s_3} & & \tEO{V}{\Omega}
}
\]
where $\tilde{E}=\ms{F} \times_{E''} E'$. Here $s_3(x,F)=(x,F^1)$, $t_2(x,F,\varphi_{\mb{W}},\varphi_{\mb{W}^2},\varphi_1,\varphi_2)=(x,F,\varphi_{\mb{W}^2},\varphi_1)$, $r_2(x,F,\varphi_{\mb{W}^2},\varphi_1)=(x,F)$ and $t_3(x,F,\varphi_{\mb{W}^2},\varphi_1)=(x,F^1,\varphi_{\mb{W}^2},\varphi_1)$. We define $r_1$ and $v_1$ by 
\begin{eqnarray*}
r_1(x,F,\varphi_{\mb{W}^2},\varphi_1)&=&(x^1,x_{\mb{W}^2},\varphi_{\mb{W}^2}(F_2/F_4)), \\
v_1(x,F,\varphi_{\mb{W}},\varphi_{\mb{W}^2},\varphi_1,\varphi_2,\varphi_{\mb{T}^2})&=&(x^1,x_{\mb{W}^2},\varphi_{\mb{W}^2}(F_2/F_4),\psi_{\mb{W}},\psi_2),
\end{eqnarray*}
where $x_{\mb{W}^2},\psi_{\mb{W}}$ and $\psi_2$ are natural morphism induced by using $\varphi_{\mb{W}}, \varphi_{\mb{W}^2}$ and $\varphi_2$. \\
\quad We have $t_2^*r_1^*L''_1=v_1^*\tilde{q}_2^*L''_1=v_1^*\tilde{q}_1^*(L_1 \boxtimes L_2 \boxtimes L)=u_1^*(L_1 \boxtimes L_2 \boxtimes L)=u_2^*L''=t_2^*r_2^*L''$. Since $t_2$ is a $\mb{G}_{\mb{T}^2} \times \tG{W^1}$-principal bundle, we obtain $r_1^*L''_1=r_2^*L''$. Therefore $p_2^*(s_3)_!L''=(t_3)_!r_2^*L''=(t_3)_!r_1^*L''=p_1^*(q_3)_!L''_1=p_1^*(L_1 \boxtimes (L_2*L))$. Thus $(p_3)_!(r_3)_!L''=L_1*(L_2*L)$. We have $K=(u_3)_!L''=L_1*(L_2*L)$. \\
\quad Similarly, we obtain $K \cong (L_1*L_2)*L$. Thus the claim follows.
\end{proof}
\subsection{Restriction functor $E_i$, Induction functors $F_i$ and $F_i^{(a)}$}
\quad We consider the following diagram
\[
\xymatrix{
\EO{T}{\Omega} \times \tEO{W}{\Omega} & \tpE{\Omega}\ar[l]_(.3){p_1}\ar[r]^{p_2} & \tppE{\Omega}\ar[r]^{p_3} & \tEO{V}{\Omega}
}.
\]
\begin{lem}
Suppose $\mb{T}=\mb{S}_i$. Let $d_{p_1}$ and $d_{p_2}$ be the dimension of the fibers of $p_1$ and $p_2$, respectively. The we have
\[
d_{p_1}-d_{p_2}=\dim{\tppE{\Omega}}-\dim{\tEO{W}{\Omega}}=\dim{\mb{W}}_i+\sum_{h \in \Omega\colon \vo(h)=i}\dim{\mb{W}_{\vi(h)}}.
\]
\end{lem}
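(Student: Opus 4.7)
The first equality is an immediate bookkeeping consequence of the two structural properties already recorded. Since $p_1$ is smooth with connected fibers and $\mb{T}=\mb{S}_i$ makes $\mb{E}_{\mb{T},\Omega}=\mb{E}_{\mb{S}_i,\Omega}=\{\pt\}$ (as noted in Section~2.5), one has $\dim\tpE{\Omega}=\dim\tEO{W}{\Omega}+d_{p_1}$; and since $p_2$ is a principal bundle, $\dim\tpE{\Omega}=\dim\tppE{\Omega}+d_{p_2}$. Subtracting yields $d_{p_1}-d_{p_2}=\dim\tppE{\Omega}-\dim\tEO{W}{\Omega}$.

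For the second equality I would compute $\dim\tppE{\Omega}$ by a two-step fibration. First, the projection $(x,V)\mapsto V$ has image $\mathcal{G}\seteq\{V\subset\mb{V}\colon V\supset V^\bot,\ \wt(\mb{V}/V)=\alpha_i\}$; since $V$ is determined by the choice of hyperplane $V_i\subset\mb{V}_i$ (with $V_j=\mb{V}_j$ for $j\neq i$ and $V^\bot$ the resulting line in degree $\theta(i)$; the containment $V\supset V^\bot$ is then automatic), we have $\mathcal{G}\cong\bb{P}(\mb{V}_i^*)$, so $\dim\mathcal{G}=\dim\mb{V}_i-1=\dim\mb{W}_i$. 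Second, over each $V$ the map $x\mapsto\bar{x}\seteq x|_{V/V^\bot}$ lands in ${}^\theta\mb{E}_{V/V^\bot,\Omega}\cong\tEO{W}{\Omega}$; this map is well defined because the relation $x_{\theta(h)}=-{}^tx_h$ combined with the $x$-stability of $V$ at $\theta(h)$ forces $V^\bot$ to be $x$-stable at $h$.

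The heart of the argument is that the composite $\tppE{\Omega}\to\mathcal{G}\times\tEO{W}{\Omega}$ is a vector bundle of rank $\sum_{h\in\Omega,\,\vo(h)=i}\dim\mb{W}_{\vi(h)}$. To verify this, fix splittings $\mb{V}_i=V_i\oplus\bb{C}v$ and $\mb{V}_{\theta(i)}=\bb{C}w\oplus\mb{W}_{\theta(i)}$ with $(v,w)=1$ and identify $V/V^\bot$ with $\mb{W}$. A lift of $\bar{x}$ to $x\in\tEO{V}{\Omega}$ with $V$ being $x$-stable can then be analyzed $\theta$-orbit by $\theta$-orbit in $\Omega$; since $x_{\theta(h)}=-{}^tx_h$, the $V$-stability constraint at one representative of the orbit already entails that at the $\theta$-partner. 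A direct case check then shows that an orbit contributes non-trivially if and only if it contains some $h\in\Omega$ with $\vo(h)=i$, in which case the unique free parameter is the ``$v$-component'' $x_h(v)\in\mb{W}_{\vi(h)}\subset\mb{V}_{\vi(h)}$: this containment is automatic when $h\neq\theta(h)$ (then $\vi(h)\notin\{i,\theta(i)\}$ so $\mb{V}_{\vi(h)}=\mb{W}_{\vi(h)}$), and is forced by the skew-symmetry $x_h=-{}^tx_h$ in the $\theta$-fixed case $h=\theta(h)$, $\vi(h)=\theta(i)$. Summing $\dim\mb{W}_{\vi(h)}$ over all such $h\in\Omega$ with $\vo(h)=i$ yields the claimed rank.

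The main technical nuisance is the $\theta$-orbit bookkeeping, which introduces two configurations absent from Lusztig's non-involutive setup: (i) for non-fixed orbits $\{h,\theta(h)\}$ whose two arrows touch $i$ on opposite sides, one must verify that only the ``$\vo=i$'' side contributes, while the side with $\vi=i$ has its degrees of freedom killed by the $V$-stability condition; and (ii) for $\theta$-fixed arrows $h=\theta(h)$, whose endpoints are necessarily $\{i,\theta(i)\}$, the skew-symmetry of $x_h$ (viewed as a skew form on $\mb{V}_{\vo(h)}$) must be combined with $V$-stability so as to produce exactly $\dim\mb{W}_i$ free parameters when $\vo(h)=i$ and zero when $\vo(h)=\theta(i)$, consistently with the uniform formula $\dim\mb{W}_{\vi(h)}$ summed over $\{h\in\Omega:\vo(h)=i\}$.
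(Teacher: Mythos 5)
Your proof is correct, and it takes a genuinely different route from the paper's. The paper computes $\dim\tEO{W}{\Omega}$ and $\dim\tppE{\Omega}$ separately: it partitions $\Omega$ into pieces ($\Omega_{10},\Omega_{11},\Omega_{12},\Omega_{00},\Omega_{01}$) according to $\theta$-fixity and incidence with $i$, writes explicit dimension formulas for each space using that decomposition, and then subtracts. You instead exhibit $\tppE{\Omega}$ as an affine bundle over $\mathcal{G}\times\tEO{W}{\Omega}\cong\bb{P}(\mb{V}_i^*)\times\tEO{W}{\Omega}$ via $(x,V)\mapsto(V,x|_{V/V^\bot})$ and compute only the relative dimension. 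This is arguably cleaner: it localizes the interaction between skew-symmetry and $V$-stability to each $\theta$-orbit at the point where it actually bites, rather than requiring a global case decomposition of $\Omega$ whose well-definedness (e.g.\ that $\theta$ preserves $\Omega_{10}$ and exchanges $\Omega_{11}$ with $\Omega_{12}$) depends on conventions that are easy to get wrong. The only small gap worth filling is surjectivity of the map $\tppE{\Omega}\to\mathcal{G}\times\tEO{W}{\Omega}$: you computed the fiber dimension, but for the affine-bundle statement you should also note that a lift $x$ of any given $(V,\bar{x})$ exists, which is clear since the constraints on $x$ are affine-linear and one may take the free parameters you identified to vanish in the chosen splittings.
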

\begin{proof}
For a vector space $V$, we denote by $\Alt(V)$ the set of all skew-symmetric linear maps $V \to V^*$. Let $\mb{P}(V)$ denote the projective space of hyperplanes of $V$. Set $\Omega_0=\{h \in \Omega \ | \ \theta(h)=h \}, \Omega_1=\Omega \backslash \Omega_0$. We have
\[
\dim{\tEO{W}{\Omega}}=\dfrac{1}{2}\sum_{h \in \Omega_1}{\dim\mb{W}_{\vo{(h)}}\dim\mb{W}_{\vi(h)}}+\sum_{h \in \Omega_0}{\dim\Alt(\mb{W}_{\vo(h)})}.
\]
We set
\begin{eqnarray*}
\Omega_{10}&=&\{h \in \Omega_1 \ | \ \vo(h) \neq i,\vi(h) \neq i\}, \\
\Omega_{11}&=&\{h \in \Omega_1 \ | \ \vo(h)=i\}, \\
\Omega_{12}&=&\{h \in \Omega_1 \ | \ \vi(h)=i\}, \\
\Omega_{00}&=&\{h \in \Omega_0 \ | \ \text{$(\vo(h),\vi(h))=(i,\theta(i))$ or $(\theta(i),i)$}\}, \\
\Omega_{01}&=&\Omega_0 \backslash \Omega_{00}.
\end{eqnarray*}
Then $\Omega_1=\Omega_{10} \sqcup \Omega_{11} \sqcup \Omega_{12}$ and $\Omega_0=\Omega_{00} \sqcup \Omega_{01}$. Note that $\theta$ gives bijections $\Omega_{10} \to \Omega_{10}$ and $\Omega_{11} \to \Omega_{12}$. Therefore we have
\begin{eqnarray*}
\dim\tppE{\Omega}&=&\dim\mb{P}(\mb{V}_i)+\dfrac{1}{2}\sum_{h \in \Omega_{10}}{\dim\mb{W}_{\vo{(h)}}\dim\mb{W}_{\vi(h)}}\\
&{}&+\sum_{h \in \Omega_{11}}\dim\mb{V}_i\dim\mb{W}_{\vi(h)}+\sum_{h \in \Omega_{01}}\dim\Alt(\mb{W}_{\vo(h)})\\
&{}&+\sum_{h \in \Omega,\vo(h)=i,\vi(h)=\theta(i)}\dim\Alt(\mb{V}_i)+\sum_{h \in \Omega,\vo(h)=\theta(i),\vi(h)=i}\dim\Alt(\mb{W}_i).
\end{eqnarray*}
Since $\dim\mb{V}_i=\dim\mb{W}_i+1$ and $\dim\Alt(\mb{V}_i)-\dim\Alt(\mb{W}_i)=\dim\mb{W}_i$, we conclude
\begin{eqnarray*}
&{}&\dim{\tppE{\Omega}}-\dim{\tEO{W}{\Omega}} \\
&=&\dim{\mb{W}}_i+\sum_{h \in \Omega_{11}}\dim\mb{W}_{\vi(h)}+\sum_{h \in \Omega,\vo(h)=i,\vi(h)=\theta(i)}(\dim\Alt(\mb{V}_i)-\dim\Alt(\mb{W}_i)) \\
&=&\dim{\mb{W}}_i+\sum_{h \in \Omega,\vo(h)=i,\vi(h)\neq\theta(i)}\dim\mb{W}_{\vi(h)}+\sum_{h \in \Omega,\vo(h)=i,\vi(h)=\theta(i)}\dim\mb{W}_i \\
&=&\dim{\mb{W}}_i+\sum_{h \in \Omega\colon \vo(h)=i}\dim{\mb{W}_{\vi(h)}}.
\end{eqnarray*}
\end{proof}

\begin{dfn} \quad
\begin{enumerate}[(i)]
\item For $\mb{T}=\mb{S}_i$ and a $\tG{W}$-equivariant semisimple object $K$ in $\tmQ{W}{\Omega}$, we define the operator $F_i$ by 
\[
F_i(K)\colon =(\mb{1}_{\mb{S}_i}*K)\left[d_{F_i}\right]
\]
where 
\[
d_{F_i}=d_{p_1}-d_{p_2}=\dim{\mb{W}}_i+\sum_{h \in \Omega\colon \vo(h)=i}\dim{\mb{W}_{\vi(h)}}.
\]
\item For $\mb{T}=\mb{S}_i$, we define the functor $E_i\colon \ms{D}(\tEO{V}{\Omega}) \to \ms{D}(\tEO{W}{\Omega})$ by 
\[
E_i(K)\colon =\Res_{\mb{S}_i,\mb{W}}(K)\left[d_{E_i}\right]
\]
where
\[
d_{E_i}=d_{F_i}-2\dim{\mb{W}_i}=-\dim{\mb{W}_i}+\sum_{h \in \Omega\colon \vo(h)=i}\dim{\mb{W}_{\vi(h)}}.
\]
\end{enumerate}
\end{dfn}
By Prposition \ref{prop-ef}, $E_i$ and $F_i$ induce the restriction functor $\tmQ{V}{\Omega} \to \tmQ{W}{\Omega}$, induction functor $\tmQ{W}{\Omega} \to \tmQ{V}{\Omega}$, respectively.\\

\begin{dfn}
For $a \in \bb{Z}_{>0}$, let $\mb{W}$ and $\mb{V}$ be $\theta$-symmetric $I$-graded vector spaces such that $\wt(\mb{V})=\wt(\mb{W})+a(\alpha_i+\alpha_{\theta(i)})$.  For a $\tG{W}$-equivariant semisimple object $L \in \tmP{W}{\Omega}$, we define $F_i^{(a)}(L)\colon =\mb{1}_{\mb{S}_i^a}*L[d_a]$ where 
\[
d_a=a\left(\dim{\mb{W}_i}+\sum_{h \in \Omega\colon \vo(h)=i}\dim\mb{W}_{\vi(h)}\right)+\dfrac{a(a-1)}{2}\sharp\{h \in \Omega|\vo(h)=i,\vi(h)=\theta(i)\}.
\]
We call $F_i^{(a)}$ the $a$-th divided power of $F_i$.
\end{dfn}

By Proposition \ref{prop-ef}(1), we have the following lemma.
\begin{lem}\label{cor-div}
The object $\tLL{i}{a}{\Omega}$ is isomorphic to $F_{i_1}^{(a_1)}F_{i_2}^{(a_2)} \cdots F_{i_{m}}^{(a_{m})}\mb{1}_{\pt}$ up to shift.
\end{lem}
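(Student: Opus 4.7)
The plan is to induct on $m$, the half-length of the $\theta$-symmetric sequence $(\mb{i},\mb{a})$, and apply Proposition \ref{prop-ef}(i) at each step.

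For the base case $m=0$, the sequences $\mb{i}$ and $\mb{a}$ are empty, so $\wt(\mb{V})=0$ forces $\mb{V}=0$; hence $\tEO{V}{\Omega}$ is a point and the flag variety $\ttFl{i}{a}{\Omega}$ is also a point, so $\tLL{i}{a}{\Omega}=\mb{1}_{\pt}$.

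For the inductive step, suppose the claim holds for all $\theta$-symmetric pairs of length $2(m-1)$. Given a $\theta$-symmetric pair $(\mb{i},\mb{a})$ of length $2m$, set $\mb{i}'=(i_2,\ldots,i_{2m-1})$ and $\mb{a}'=(a_2,\ldots,a_{2m-1})$; the conditions $\theta(i_\ell)=i_{2m-\ell+1}$ and $a_{2m-\ell+1}=a_\ell$ force $(\mb{i}',\mb{a}')$ to be $\theta$-symmetric of length $2(m-1)$, with $i_{2m}=\theta(i_1)$ and $a_{2m}=a_1$. Let $\mb{W}$ be the $\theta$-symmetric $I$-graded vector space associated to $(\mb{i}',\mb{a}')$; then $\wt(\mb{V})=\wt(\mb{W})+a_1(\alpha_{i_1}+\alpha_{\theta(i_1)})$, which is exactly the setting required by the definition of $F_{i_1}^{(a_1)}$. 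Proposition \ref{prop-ef}(i), applied to the inner sequence $(\mb{i}',\mb{a}')$ with index $i=i_1$ and multiplicity $a=a_1$, gives
\[
\mb{1}_{\mb{S}_{i_1}^{a_1}}*\tLL{i'}{a'}{\Omega}\;=\;\tLL{i}{a}{\Omega}.
\]
By the definition of the divided power, $F_{i_1}^{(a_1)}(\tLL{i'}{a'}{\Omega})=\mb{1}_{\mb{S}_{i_1}^{a_1}}*\tLL{i'}{a'}{\Omega}\,[d_{a_1}]$, so $\tLL{i}{a}{\Omega}\cong F_{i_1}^{(a_1)}(\tLL{i'}{a'}{\Omega})$ up to a shift. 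Combining with the inductive hypothesis $\tLL{i'}{a'}{\Omega}\cong F_{i_2}^{(a_2)}\cdots F_{i_m}^{(a_m)}\mb{1}_{\pt}$ up to shift yields the claim.

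Since the statement is only up to shift, no computation of the cumulative degree is required; the only step that uses nontrivial geometry is Proposition \ref{prop-ef}(i), which has already been proved. Consequently the argument is essentially a formal unwinding, and I do not expect any real obstacle beyond bookkeeping the $\theta$-symmetry when splitting off the outermost index $i_1$ (and its partner $\theta(i_1)=i_{2m}$) at each inductive step.
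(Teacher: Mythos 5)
Your proof is correct and follows essentially the same route as the paper, which simply cites Proposition \ref{prop-ef}(1) and leaves the iteration implicit; you have just spelled out the induction on $m$ (peeling off the outer pair $(i_1,\theta(i_1))$) that the paper leaves to the reader. Your bookkeeping of the $\theta$-symmetry of the inner sequence $(\mb{i}',\mb{a}')$ and of the weight relation $\wt(\mb{V})=\wt(\mb{W})+a_1(\alpha_{i_1}+\alpha_{\theta(i_1)})$ is accurate, and since the claim is only up to shift the degree constants $d_{a_1}$ can indeed be ignored.
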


\begin{lem}\label{lem-df}
The operator $F_i^{(a)}$ gives a functor $\tmQ{W}{\Omega} \to \tmQ{V}{\Omega}$ and satisfy $F_iF_i^{(a)}=F_i^{(a)}F_i=[a+1]_vF_i^{(a+1)}$.
\end{lem}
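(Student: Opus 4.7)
The statement splits into (a) $F_i^{(a)}$ restricts to a functor $\tmQ{W}{\Omega}\to\tmQ{V}{\Omega}$, and (b) the multiplication relation in the Grothendieck group $\tKK{\Omega}$.

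For (a), the plan is to combine Proposition \ref{prop-ef}(i) with the definition of $\tmP{V}{\Omega}$. Any simple object $L\in\tmP{W}{\Omega}$ appears as a direct summand of some $\tLL{i}{a}{\Omega}[d]$. Proposition \ref{prop-ef}(i) identifies $\mb{1}_{\mb{S}_i^a}*\tLL{i}{a}{\Omega}$ with $\tLL{(i,\mb{i},\theta(i))}{(a,\mb{a},a)}{\Omega}$, whose simple perverse summands lie in $\tmP{V}{\Omega}$ by definition. Hence $F_i^{(a)}(L)$ is a direct sum of shifts of simple objects in $\tmP{V}{\Omega}$, and additivity yields $F_i^{(a)}\colon\tmQ{W}{\Omega}\to\tmQ{V}{\Omega}$.

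For (b), I would invoke the associativity of $*$ (Lemma \ref{ass}) with the classical $I$-graded spaces $\mb{T}^1=\mb{S}_i$, $\mb{T}^2=\mb{S}_i^a$, and $L=K\in\tmQ{W}{\Omega}$, which gives $(\mb{1}_{\mb{S}_i}*\mb{1}_{\mb{S}_i^a})*K\cong\mb{1}_{\mb{S}_i}*(\mb{1}_{\mb{S}_i^a}*K)$. The internal convolution $\mb{1}_{\mb{S}_i}*\mb{1}_{\mb{S}_i^a}$ is then a purely classical Section \ref{sec:Lus} computation on $\mb{E}_{\mb{S}_i^{a+1},\Omega}=\{\pt\}$ (the symmetric Cartan condition $(\alpha_i,\alpha_i)=2$ forbids arrows joining two vertices of color $i$). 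The intermediate variety $\ppE{\Omega}$ parametrises $a$-dimensional subspaces of the $(a+1)$-dimensional space $\mb{S}_i^{a+1}$, so $\ppE{\Omega}\cong\mb{P}^a$, and proper pushforward of the constant sheaf produces
\[
\mb{1}_{\mb{S}_i}*\mb{1}_{\mb{S}_i^a}\cong\bigoplus_{k=0}^{a}\mb{1}_{\mb{S}_i^{a+1}}[-2k].
\]
The symmetric computation for $\mb{1}_{\mb{S}_i^a}*\mb{1}_{\mb{S}_i}$, in which the relevant $\ppE{\Omega}$ is the projective space of lines in $\mb{S}_i^{a+1}$ (again $\mb{P}^a$), yields the same formula.

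What remains is to balance the shifts. Writing $\mb{V}_a$ for the $\theta$-symmetric space with $\wt(\mb{V}_a)=\wt(\mb{W})+a(\alpha_i+\alpha_{\theta(i)})$, substituting $\dim(\mb{V}_a)_j=\dim\mb{W}_j+a(\delta_{j,i}+\delta_{j,\theta(i)})$ into the definition of $d_{F_i}$ and using the elementary identity $a+\binom{a}{2}=\binom{a+1}{2}$ should give
\[
d_{F_i}(\mb{V}_a)+d_a(\mb{W})=d_{a+1}(\mb{W})+a,
\]
and the analogous calculation produces the same equality for the shift appearing in $F_i^{(a)}F_i(K)$. Substituting back,
\[
F_iF_i^{(a)}(K)\cong F_i^{(a)}F_i(K)\cong\bigoplus_{k=0}^{a}F_i^{(a+1)}(K)[a-2k],
\]
which equals $[a+1]_v F_i^{(a+1)}(K)$ in $\tKK{\Omega}$. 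The main obstacle is exactly this shift bookkeeping, but it collapses cleanly onto the binomial identity noted above.
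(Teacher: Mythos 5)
Your proposal is correct and follows essentially the same route as the paper: functoriality via Proposition \ref{prop-ef}(1), then associativity of $*$ (Lemma \ref{ass}) to reduce to the internal product $\mb{1}_{\mb{S}_i}*\mb{1}_{\mb{S}_i^a}=[a+1]_vv^{-a}\mb{1}_{\mb{S}_i^{a+1}}$ on the point $\mb{E}_{\mb{S}_i^{a+1},\Omega}$, with the shift bookkeeping $d_a+d_{F_i}(\mb{V}_a)=d_{a+1}+a$ absorbing the $v^{-a}$. The paper writes out this shift identity explicitly rather than just citing the binomial identity, and it records only the $F_iF_i^{(a)}$ side (the $F_i^{(a)}F_i$ side, which you sketch symmetrically, is left implicit there), but the underlying argument is the same.
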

\begin{proof}
By Proposition \ref{prop-ef}(1), $F_i^{(a)}$ gives a functor $\tmQ{W}{\Omega} \to \tmQ{V}{\Omega}$. We have 
\[
F_iF_i^{(a)}(L)=F_i(\mb{1}_{\mb{S}_i^a}*L)[d_a]=\mb{1}_{\mb{S}_i}*(\mb{1}_{\mb{S}_i^a}*L)v^{d_a+d}
\]
where
\begin{eqnarray*}
d&=&\dim\mb{W}_i+a+\sum_{h \in \Omega\colon \vo(h)=i,\vi(h)\neq\theta(i)}\dim\mb{W}_{\vi(h)}+\sum_{h \in \Omega\colon \vo(h)=i,\vi(h)=\theta(i)}(\dim\mb{W}_{\theta(i)}+a) \\
&=&\dim\mb{W}_i+a+\sum_{h \in \Omega\colon \vo(h)=i}\dim\mb{W}_{\vi(h)}+a\sharp\{h \in \Omega|\vo(h)=i,\vi(h)=\theta(i)\}.
\end{eqnarray*}
Note that $\mb{1}_{\mb{S}_i}*\mb{1}_{\mb{S}_i^a}=(1+v^{-2}+ \cdots +v^{-2a})\mb{1}_{\mb{S}_i^{a+1}}=[a+1]_vv^{-a}\mb{1}_{\mb{S}_i^{a+1}}$ in $\mb{E}_{\mb{S}_i^{a+1},\Omega}$. By Lemma \ref{ass}, we have
\begin{eqnarray*}
F_iF_i^{(a)}(L)&=&[a+1]_vv^{-a}v^{d_a+d}\mb{1}_{\mb{S}_i^{a+1}}*L \\
&=&[a+1]_vv^{-a}v^{d_a+d-d_{a+1}}F_i^{(a+1)}(L).
\end{eqnarray*}
Since 
\begin{eqnarray*}
d_a+d&=&(a+1)\left(\dim{\mb{W}_i}+\sum_{h \in \Omega\colon \vo(h)=i}\dim\mb{W}_{\vi(h)}\right)+a \\
&{}& \quad +\left(\dfrac{a(a-1)}{2}+a\right)\sharp\{h \in \Omega|\vo(h)=i,\vi(h)=\theta(i)\} \\
&=&d_{a+1}+a,
\end{eqnarray*}
we conculde $F_iF_i^{(a)}=[a+1]_vF_i^{(a+1)}$. 
\end{proof}

\subsection{Commutativity with Fourier transforms}
For two $\theta$-orientations $\Omega$ and $\Omega'$, we have $\overline{\Omega \backslash \Omega'}=\Omega' \backslash \Omega$. Then we can regard $\tEO{V}{\Omega} \to \tEO{V}{\Omega \cap \Omega'}$ and $\tEO{V}{\Omega'} \to \tEO{V}{\Omega \cap \Omega'}$ as vector bundles and they are the dual vector bundle to each other by the form $\sum_{h \in \Omega \backslash \Omega'}\tr(x_hx_{\bar{h}})$ on $\tEO{V}{\Omega} \times \tEO{V}{\Omega'}$. We say that $L \in \ms{D}(\tEO{V}{\Omega})$ is $(\bb{C}^\times)^{\Omega,\theta}$-monodromic if $H^j(L)$ is locally constant on every $(\bb{C}^\times)^{\Omega,\theta}$-orbit on $\tEO{V}{\Omega}$. Let $\ms{D}_{(\bb{C}^\times)^{\Omega,\theta}\rm{-mono}}(\tEO{V}{\Omega})$ be the full subcategory of $\ms{D}(\tEO{V}{\Omega})$ consisting of $(\bb{C}^\times)^{\Omega,\theta}$-monodromic objects. Hence we have the Fourier transform $\Phi_{\mb{V}}^{\Omega\Omega'}\colon \ms{D}_{(\bb{C}^\times)^{\Omega,\theta}\rm{-mono}}(\tEO{V}{\Omega}) \to \ms{D}_{(\bb{C}^\times)^{\Omega,\theta}\rm{-mono}}(\tEO{V}{\Omega'})$. The following lemma is obvious.
\begin{lem}\label{FD-lem}
For three $\theta$-orientations $\Omega,\Omega'$ and $\Omega''$, we have 
\[
\Phi_{\mb{V}}^{\Omega'\Omega''}\circ\Phi_{\mb{V}}^{\Omega\Omega'}\cong a^* \circ \Phi_{\mb{V}}^{\Omega\Omega''} \colon \ms{D}_{(\bb{C}^\times)^{\Omega,\theta}\rm{-mono}}(\tEO{V}{\Omega}) \to \ms{D}_{(\bb{C}^\times)^{\Omega,\theta}\rm{-mono}}(\tEO{V}{\Omega''})
\]
where $a:\tEO{V}{\Omega''} \to \tEO{V}{\Omega''}$ is defined by $x_h \mapsto -x_h$ or $x_h$ according that $h \in \Omega'' \cap \overline{\Omega'} \cap \Omega$ or not. 
In particular, $\ms{D}_{(\bb{C}^\times)^{\Omega,\theta}\rm{-mono}}(\tEO{V}{\Omega})$ does not depend on $\Omega$.
\end{lem}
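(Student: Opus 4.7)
My plan is to reduce the claim to the basic involutivity $\Phi_{E^*/S}\circ\Phi_{E/S}\cong a^*$ of Proposition~\ref{FD}(1) by decomposing all three spaces over a common base and factoring the Fourier transforms arrow-orbit by arrow-orbit. Let $S:=\tEO{V}{\Omega\cap\Omega'\cap\Omega''}$. Since each orientation is $\theta$-stable, $H$ is partitioned into $\theta$-orbits, and each orientation picks one member of each $\{h,\bar h\}$-pair consistently on every orbit. Over $S$, the fiber of $\tEO{V}{\Omega_\bullet}$ decomposes into pieces $W_O^{\Omega_\bullet}$ indexed by the $\theta$-orbits $O$ lying outside $\Omega\cap\Omega'\cap\Omega''$, and each such piece is either a fixed model vector space $W_O$ or its dual $W_O^*$ depending on the choice of $\Omega_\bullet$ on $O$.

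First I would use Proposition~\ref{FD}(4) to pull each pairwise Fourier transform $\Phi_{\mb{V}}^{\Omega_1\Omega_2}$, originally defined over $\tEO{V}{\Omega_1\cap\Omega_2}$, down to the common base $S$; this is a straightforward Cartesian base change because $S\hookrightarrow\tEO{V}{\Omega_1\cap\Omega_2}$ is a linear subbundle. A second application of Proposition~\ref{FD}(4), combined with the compatibility of the Fourier transform with external direct sums of vector bundles (itself a formal consequence of base change), then lets me factor $\Phi_{\mb{V}}^{\Omega_1\Omega_2}$ as a product of Fourier transforms indexed by orbits $O$: the factor is the identity on orbits where $\Omega_1$ and $\Omega_2$ agree, and the standard Fourier transform $W_O\leftrightarrow W_O^*$ otherwise.

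With this factorization in hand, the entire identity reduces to an orbit-by-orbit check. For each orbit $O$ I encode the three orientations by signs $\epsilon_\bullet\in\{+,-\}$ and run through the eight configurations: Proposition~\ref{FD}(1) tells me that a Fourier transform composed with itself contributes the fiberwise $-1$, so $\Phi_{\mb{V}}^{\Omega'\Omega''}\circ\Phi_{\mb{V}}^{\Omega\Omega'}$ differs from $\Phi_{\mb{V}}^{\Omega\Omega''}$ on the $O$-factor exactly in the case where Fourier fires on both of the first two steps but not on the direct step, i.e.\ when $\Omega$ and $\Omega''$ agree on $O$ but both disagree with $\Omega'$. Reindexed by $h\in\Omega''$, this becomes precisely $h\in\Omega\cap\overline{\Omega'}$, matching the definition of $a$. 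The final ``in particular'' clause then follows from the fact that each $\Phi_{\mb{V}}^{\Omega\Omega'}$ is an equivalence on monodromic subcategories (Proposition~\ref{FD}(2), with inverse supplied by (1)).

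The step I expect to be most delicate is the combinatorial bookkeeping in the orbit-by-orbit analysis: I must verify that the case ``Fourier fires twice on $O$'' matches the stated condition $h\in\Omega''\cap\overline{\Omega'}\cap\Omega$ once one reindexes by the representative $h\in\Omega''$ picked on each orbit. The other potential nuisance is to check that the direct-sum factorization of Step~2 respects the mixed situation of size-one $\theta$-orbits (where the fiber is a space of skew-symmetric maps and the self-duality under $\tr(x_hx_{\bar h})$ must be treated carefully); however, the pairing given in the excerpt already decomposes as a sum over $\theta$-orbits, so this is only a sanity check. Everything else is formal manipulation with the properties of $\Phi$ collected in Proposition~\ref{FD}.
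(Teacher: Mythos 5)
The paper offers no proof; Lemma~\ref{FD-lem} is declared ``obvious'' immediately after identifying $\tEO{V}{\Omega}$ and $\tEO{V}{\Omega'}$ as dual vector bundles over $\tEO{V}{\Omega\cap\Omega'}$ via the trace form. Your proposal is a correct and complete spelling-out of that obvious argument, and it is the natural one: decompose the three spaces over the common base $\tEO{V}{\Omega\cap\Omega'\cap\Omega''}$, factor each $\Phi_{\mb{V}}^{\Omega_1\Omega_2}$ orbit-by-orbit over the $\{h,\bar h,\theta(h),\theta(\bar h)\}$-orbits in $H$, and reduce to the involutivity $\Phi\circ\Phi\cong a^*$ of Proposition~\ref{FD}(1) on each factor. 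Your sign check is right: reindexing each orbit by its representative $h\in\Omega''$, the composite $\Phi_{\mb{V}}^{\Omega'\Omega''}\circ\Phi_{\mb{V}}^{\Omega\Omega'}$ fires twice while $\Phi_{\mb{V}}^{\Omega\Omega''}$ does not fire at all exactly when $h\in\Omega$ and $\bar h\in\Omega'$, which is $h\in\Omega''\cap\overline{\Omega'}\cap\Omega$, so the discrepancy is the fiberwise $-1$ on precisely those coordinates.

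Two small remarks on phrasing rather than substance. First, the citation of Proposition~\ref{FD}(4) is a touch loose: what your reduction really uses is the Fubini/external-product compatibility of the Fourier--Sato transform (that $\Phi_{(E_1\oplus E_2)/S}$ is the commuting composite of partial transforms in the $E_1$- and $E_2$-directions, with spectator directions adjoined to the base), which is a standard formal fact but is not literally the Cartesian base-change statement (4). Second, for the $\theta$-fixed arrows $h=\theta(h)$ the orbit contributes an $\Alt$-block rather than a full $\Hom$-block; as you note, the trace pairing restricts to a perfect pairing between the two skew-symmetric spaces, so these orbits cause no trouble and the factorization applies verbatim. The ``in particular'' clause then follows as you say, since each $\Phi_{\mb{V}}^{\Omega\Omega'}$ is an equivalence on the monodromic subcategory.
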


Since any object in $\tmQ{V}{\Omega}$ is $\tG{V} \times (\bb{C}^\times)^{\Omega,\theta}$-equivariant, it is a monodromic object. By the commutativity between $E_i, F_i$ and $(\bb{C}^\times)^{\Omega,\theta}$-action, the functors $E_i$ and $F_i$ preserve the category $(\bb{C}^\times)^{\Omega,\theta}$-monodromic objects. \\

\begin{thm}\label{com-FD} \ Let $\mb{V}$ and $\mb{W}$ be $\theta$-symmetric $I$-graded vector spaces such that $\wt{\mb{V}}=\wt{\mb{W}}+\alpha_i+\alpha_{\theta(i)}$, and $\Omega$ and $\Omega'$ be two $\theta$-symmetric orientations.
\benu[{\rm(1)}]
\item  Let $F_i^\Omega$ and $F_i^{\Omega'}$ be the induction functors with respect to $\Omega$ and $\Omega'$, respectively. For a $\tG{W}$-equivariant semisimple obejct $L \in \tmQ{W}{\Omega}$, we have $\Phi_{\mb{V}}^{\Omega\Omega'} \circ F_i^{\Omega}(L) \cong F_i^{\Omega'} \circ \Phi_{\mb{W}}^{\Omega\Omega'}(L)$.
\item Let ${E_i}^{\Omega}$ and ${E_i}^{\Omega'}$ be the restriction functors with respect to $\Omega$ and $\Omega'$, respectively. For a $\tG{V}$-equivariant semisimple obejct $K \in \tmQ{W}{\Omega}$, we have $\Phi_{\mb{W}}^{\Omega\Omega'} \circ {E_i}^{\Omega}(K) \cong {E_i}^{\Omega'} \circ \Phi_{\mb{V}}^{\Omega\Omega'}(K)$.
\item The Fourier transform $\Phi_{\mb{V}}^{\Omega\Omega'}$ gives an isomorphism between $\tmP{V}{\Omega}$ and $\tmP{V}{\Omega'}$ and an equivalence between $\tmQ{V}{\Omega}$ and $\tmQ{V}{\Omega'}$.
\eenu
\end{thm}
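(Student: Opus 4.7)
By iterating Lemma \ref{FD-lem}, it suffices to treat the case where $\Omega$ and $\Omega'$ differ in a single $\theta$-orbit of arrows; the resulting sign twist $a^*$ can be absorbed into the final comparison. The approach is then the $\theta$-symmetric analogue of Lusztig's argument in \cite{Lus1} for the non-symmetric case: enlarge each of the diagrams defining $F_i$ and $E_i$ into a diagram of vector bundles over a common base (together with some Cartesian squares thereof) and apply Proposition \ref{FD}(3)--(4).

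For (1), consider the diagram $\EO{T}{\Omega}\times\tEO{W}{\Omega}\stackrel{p_1}{\leftarrow}\tpE{\Omega}\stackrel{p_2}{\rightarrow}\tppE{\Omega}\stackrel{p_3}{\rightarrow}\tEO{V}{\Omega}$ and its $\Omega'$ analogue. Let $\mathcal{G}$ be the Grassmannian of $I$-graded subspaces $V\subset\mb{V}$ with $V\supset V^\bot$ and $\wt(\mb{V}/V)=\alpha_i$; it carries a tautological ambient vector bundle in which both $\tppE{\Omega}$ and $\tppE{\Omega'}$ embed as vector subbundles, cut out by the $V$-stability conditions for the $\Omega$- and $\Omega'$-parts respectively. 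The Fourier pairing making $\tEO{V}{\Omega}$ and $\tEO{V}{\Omega'}$ dual over $\tEO{V}{\Omega\cap\Omega'}$ identifies (after pullback to $\mathcal{G}$) the subbundle $\tppE{\Omega}$ with the orthogonal of $\tppE{\Omega'}$; here the condition $V\supset V^\bot$ and the skew-symmetric condition of Definition \ref{def-te} are exactly what is needed for the annihilator to be computed cleanly. Proposition \ref{FD}(3), applied to the inclusion $\tppE{\Omega}\hookrightarrow\tEO{V}{\Omega}\times\mathcal{G}$ and its transpose, then gives $\Phi\circ(p_3^\Omega)_!\cong(p_3^{\Omega'})_!\circ\Phi$ up to an explicit rank-difference shift. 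The map $p_2$ is a principal $\mb{G}_{\mb{T}}\times\tG{W}$-bundle in a manner compatible with either orientation, and $p_1$ is a linear projection covering a smooth morphism on the base $\EO{T}{\Omega\cap\Omega'}\times\tEO{W}{\Omega\cap\Omega'}$; combining this with Proposition \ref{FD}(4) and the fact that $\mb{1}_{\pt}$ on $\EO{\mb{S}_i}{\Omega}=\{\pt\}$ is preserved by Fourier transform yields the claim after tallying the resulting shifts against $d_{F_i}$.

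Part (2) is analogous with the restriction diagram: $\tEE{W,V}{\Omega}$ is a vector subbundle of $\tEO{V}{\Omega}$ whose orthogonal under the Fourier pairing is $\tEE{W,V}{\Omega'}$, the inclusion $\iota$ and its Fourier transpose fit into Proposition \ref{FD}(3), and the linear map $p$ commutes with the Fourier transforms on the factors. For part (3), Lemma \ref{cor-div} exhibits every $\tLL{i}{a}{\Omega}$ as a shift of $F_{i_1}^{(a_1)}\cdots F_{i_m}^{(a_m)}\mb{1}_{\pt}$ starting from $\tEO{0}{\Omega}=\{\pt\}$; iterating (1) therefore gives $\Phi_{\mb{V}}^{\Omega\Omega'}(\tLL{i}{a}{\Omega})\cong\tLL{i}{a}{\Omega'}$ up to shift. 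Since the Fourier transform preserves perversity (Proposition \ref{FD}(2)) and commutes with direct sums, the simple perverse direct summands of the left side correspond bijectively to those of the right, yielding $\tmP{V}{\Omega}\cong\tmP{V}{\Omega'}$ and the categorical equivalence $\tmQ{V}{\Omega}\cong\tmQ{V}{\Omega'}$.

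The principal obstacle will be shift and sign bookkeeping: verifying that the rank-difference shifts from Proposition \ref{FD}(3), the sign twists from Lemma \ref{FD-lem}, and the built-in shifts $d_{F_i}$, $d_{E_i}$ all cancel so that the isomorphisms hold on the nose. A secondary delicacy is the orthogonality statement $\tppE{\Omega}^\perp=\tppE{\Omega'}$ when the reversed arrow is $\theta$-fixed ($\theta(h)=h$), since the corresponding fiber involves the space $\Alt(\mb{V}_{\vo(h)})$ rather than a plain $\Hom$; the Fourier pairing must then be computed on skew-symmetric maps and compared with the constraint coming from $V\supset V^\bot$ directly.
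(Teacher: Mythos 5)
The proposal diverges from the paper's proof at the critical step, and the divergence introduces a genuine gap. You propose to view $\tppE{\Omega}$ and $\tppE{\Omega'}$ as mutually orthogonal vector subbundles of a common ambient bundle over the Grassmannian $\mathcal{G}$, and then to apply Proposition \ref{FD}(3) to the inclusion $\tppE{\Omega}\hookrightarrow E_3$. But the orthogonality claim is false: the annihilator of the $V$-stability constraint is not another $V$-stability constraint. Concretely, fix $h\in\Omega\setminus\Omega'$ with $\vo(h)=a$, $\vi(h)=b$, and decompose $\mb{V}_a=(\mb{V}^\bot)_a\oplus\mb{W}_a\oplus A''$, $\mb{V}_b=(\mb{V}^\bot)_b\oplus\mb{W}_b\oplus B''$ (with $V=V^\bot\oplus\mb{W}$ as a splitting). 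The fiber of $\tppE{\Omega}$ is the block-upper-triangular space $\{x:x_{21}=x_{31}=x_{32}=0\}\subset\Hom(\mb{V}_a,\mb{V}_b)$, while the fiber of $\tppE{\Omega'}$ is $\{y:y_{21}=y_{31}=y_{32}=0\}\subset\Hom(\mb{V}_b,\mb{V}_a)$, and the annihilator of the former under the trace pairing is $\{y:y_{12},y_{13},y_{23}\text{ free, others zero}\}$, which is much smaller. Indeed one has
\[
\dim\tppE{\Omega}+\dim\tppE{\Omega'}=\dim\mb{V}_a\dim\mb{V}_b+\dim\mb{W}_a\dim\mb{W}_b,
\]
so the two spaces cannot be mutual annihilators inside a bundle of rank $\dim\mb{V}_a\dim\mb{V}_b$.

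The resource you are missing — and which the paper exploits — is the framing data $(\varphi^{\mb{W}},\varphi^{\mb{T}})$ carried by $\tpE{\Omega}$ (as opposed to $\tppE{\Omega}$) together with the map $\tilde{p}_1\colon\tpE{\Omega}\to E_1$ remembering the restriction to $\mb{W}$. The paper's Cartesian square
\[
\xymatrix{
\tpE{\Omega'}\ar[r]^{\iota'_2}\ar[d]_{\tilde{p'_1}} & E'_2\ar[d]_{{}^t\iota_2} \\
E'_1\ar[r]^{{}^t\tilde{p}_1} & (\tpE{\Omega})^*
}
\]
packages the Fourier-transpose of \emph{two} maps at once ($\iota_2$ \emph{and} $\tilde{p}_1$), and proper base change across this square is what yields the commutativity. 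It is precisely the constraint $\wt(\mb{T})=\alpha_i$ (so $(V^\bot)_j\ne0$ only for $j=\theta(i)$ and $\mb{V}_j/V_j\ne0$ only for $j=i$) that forces the offending blocks $y_{11}$ and $y_{33}$ to have dimension zero, so the square really is Cartesian. Your Grassmannian picture, which omits the factor carrying $\varphi^{\mb{W}}$ and the auxiliary map to $E_1$, cannot recover this cancellation. Beyond this, your reduction to a single $\theta$-orbit of arrows via Lemma \ref{FD-lem}, and the sketch of part (3) via Lemma \ref{cor-div} and Proposition \ref{FD}(2), are in the right spirit, but the core of part (1) needs to be reworked along the lines just described.
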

\begin{proof}
(1) \ Let us define the fibre products $E_1,E_2,E_3,E'_1,E'_2$ and $E'_3$ by
\[
\begin{array}{ll}
E_1\colon =\tEO{W}{\Omega} \times_{\tEO{W}{\Omega \cap \Omega'}}\tpE{\Omega \cap \Omega'}, & E'_1\colon =\tEO{W}{\Omega'} \times_{\tEO{W}{\Omega \cap \Omega'}}\tpE{\Omega \cap \Omega'}, \\
E_2\colon =\tpE{\Omega \cap \Omega'} \times_{\tEO{V}{\Omega \cap \Omega'}}\tEO{V}{\Omega}, & E'_2\colon =\tpE{\Omega \cap \Omega'} \times_{\tEO{V}{\Omega \cap \Omega'}}\tEO{V}{\Omega}, \\
E_3\colon =\tppE{\Omega \cap \Omega'} \times_{\tEO{V}{\Omega \cap \Omega'}}\tEO{V}{\Omega},  & E'_3\colon =\tppE{\Omega \cap \Omega'} \times_{\tEO{V}{\Omega \cap \Omega'}}\tEO{V}{\Omega'}.
\end{array}
\]
Note that $E'_1$ and $E'_2$ are the dual vector bundle of $E_1$ and $E_2$ over $\tpE{\Omega \cap \Omega'}$ respectively, and $E'_3$ is the dual vector bundle of $E_3$ over $\tppE{\Omega \cap \Omega'}$. We denote by $\Phi_{E_j}\colon \ms{D}_{(\bb{C}^\times)^{\Omega,\theta}\rm{-mono}}(E_j) \to \ms{D}_{(\bb{C}^\times)^{\Omega,\theta}\rm{-mono}}(E'_j) \ (j=1,2,3)$ and $\Phi'\colon \ms{D}_{(\bb{C}^\times)^{\Omega,\theta}\rm{-mono}}(\tpE{\Omega}) \to \ms{D}_{(\bb{C}^\times)^{\Omega,\theta}\rm{-mono}}(\tpE{\Omega}^*)$ the Fourier transforms. For simplicity, we denote by $\Phi_\mb{V},\Phi_{\mb{W}}$ insted of $\Phi_{\mb{V}}^{\Omega\Omega'},\Phi_{\mb{W}}^{\Omega\Omega'}$, respectively.\\
\quad We denote by $u_1$ and $u'_1$ the projections $E_1 \to \tEO{W}{\Omega}$ and $E'_1 \to \tEO{W}{\Omega'}$, respectively. Let $\tilde{p_1},\tilde{p'_1},\iota_2$ and $\iota'_2$ be the canonical maps $\tpE{\Omega} \to E_1, \tpE{\Omega'} \to E'_1, \tpE{\Omega} \to E_2$ and $\tpE{\Omega'} \to E'_2$, respectively. Then we obtain the following Cartesian diagram of the vector bundles on $\tpE{\Omega \cap \Omega'}$: 
\[
\xymatrix{
\tpE{\Omega'}\ar[r]^{\iota'_2}\ar[d]_{\tilde{p'_1}}\ar@{}[dr]|\Box & E'_2\ar[d]_{{}^t\iota_2} \\
E'_1\ar[r]^{{}^t\tilde{p_1}} & (\tpE{\Omega})^*
}
\]
Moreover let $u_3$ and $u'_3$ be the projections $E_3 \to \tEO{V}{\Omega}$ and $E'_3 \to \tEO{V}{\Omega'}$, respectively, $\tilde{p_2},\tilde{p'_2},\iota_3$ and $\iota'_3$ the canonical maps $\tppE{\omega} \to E_3$ and $\tppE{\Omega'} \to E'_3$, repectively. We obtain the following diagram:
\[
\xymatrix{
E_1\ar[d]_{u_1} & E_2\ar[r]_{\widetilde{p_2}} & E_3\ar[dr]^{u_3} & \\
\tEO{W}{\Omega}\ar[d] & \tpE{\Omega}\ar[l]_{p_1}\ar[r]^{p_2}\ar[d]\ar[ul]_{\widetilde{p_1}}\ar[u]_{\iota_2}\ar@{}[ur]|\Box & \tppE{\Omega}\ar[r]^{p_3}\ar[d]\ar[u]_{\iota_3} & \tEO{V}{\Omega}\ar[d] \\
\tEO{W}{\Omega \cap \Omega'} & \tpE{\Omega \cap \Omega'}\ar[l]\ar[r] & \tppE{\Omega \cap \Omega'}\ar[r] & \tEO{V}{\Omega \cap \Omega'} \\
\tEO{W}{\Omega'}\ar[u] & \tpE{\Omega'}\ar[l]_{p'_1}\ar[r]^{p'_2}\ar[u]\ar[dl]_{\widetilde{p'_1}}\ar[d]_{\iota'_2}\ar@{}[dr]|\Box & \tppE{\Omega'}\ar[r]^{p'_3}\ar[u]\ar[d]_{\iota'_3} & \tEO{V}{\Omega'}\ar[u] \\
E'_1\ar[u]_{u'_1}\ar[d]_{{}^t\widetilde{p_1}}\ar@{}[r]|\Box & E'_2\ar[r]_{\widetilde{p'_2}}\ar[dl]_{{}^t\iota_2} & E'_3\ar[ur]_{u'_3} & \\
 (\tpE{\Omega})^* & & & 
}
\]
\quad Let $L$ be $\tG{W}$-equivariant semisimple complex on $\tEO{W}{\Omega}$, $L''$ a unique semisimple complex such that $p_2^*L''=p_1^*L$, and $K''$ a unique semisimple complex such that ${p'_1}^*\Phi_{\mb{W}}(L)={p'_2}^*K''$. \\
\quad By Proposition \ref{FD}, we have
\eqn
\widetilde{p'_2}^*\Phi_{E_3}((\iota_3)_!L'')&=&\Phi_{E_2}(\widetilde{p_2}^*(\iota_3)_!L'')=\Phi_{E_2}((\iota_2)_!p_2^*L'')=\Phi_{E_2}((\iota_2)_!p_1^*L)\\
&=&{}^t\iota_2^*\Phi'(\widetilde{p_1}^*u_1^*L)[d_2] \\
&=&{}^t\iota_2^*({}^tp_1)_!(u'_1)^*\Phi_{\mb{W}}(L)[d_2+d_1] \\
&=&(\iota'_2)_!\widetilde{p'_1}^*\Phi_{\mb{W}}(L)[d_1+d_2]=(\iota'_2)_!(p'_1)^*\Phi_{\mb{W}}(L)[d_1+d_2] \\
&=&(\iota'_2)_!(p'_2)^*K''[d_1+d_2]=\widetilde{p'_2}^*(\iota'_3)_!K''[d_1+d_2]
\eneqn
where
\[
d_1=\rank(E_1)-\rank(\tpE{\Omega}), \quad  
d_2=\rank(E_2)-\rank(\tpE{\Omega}).
\]
Hence $\Phi_{E_3}((\iota_3)_!L)=(\iota'_3)_!K''[d_1+d_2]$. Then
\[
\Phi_{\mb{V}}((p_3)_!L'')=(u'_3)_!\Phi_{E_3}((\iota_3)_!L'')=(u'_3)_!(\iota'_3)_!K''[d_1+d_2]=(p'_3)_!K''[d_1+d_2].
\]
We have
\[
\Phi_{\mb{V}} \circ F_i^{\Omega}(L)=F_i^{\Omega'} \circ \Phi_{\mb{W}}(L)[d]
\]
where
\[
d=d_1+d_2+\sum_{i \stackrel{\Omega}{\longrightarrow} \xi}\dim{W_\xi}-\sum_{i \stackrel{\Omega'}{\longrightarrow} \eta}\dim{W_\eta}.
\]
Now we suppose $\Omega \backslash \Omega'=\{h,\theta(h)\}$ and put $\vo(h)=k,\vi(h)=\ell$. When $k=i$, we have $\sum_{i \stackrel{\Omega}{\longrightarrow} \xi}\dim{W_\xi}-\sum_{i \stackrel{\Omega'}{\longrightarrow} \eta}\dim{W_\eta}=\dim{\mb{W}_\ell}$. If $\ell \neq \theta(i)$, we have $d_2=0$ and $d_1=\dim{\mb{W}_i}\dim{\mb{W}_\ell}-\dim{\mb{V}_i}\dim{\mb{V}_\ell}=-\dim{\mb{W}_\ell}$. If $\ell=\theta(i)$, we have $d_2=0$ and $d_1=\dim\Alt(\mb{W}_i,\mb{W}_{\theta(i)})-\dim\Alt(\mb{V}_i,\mb{V}_{\theta(i)})=-\dim\mb{W}_{\theta(i)}$. Thus we obtain $\Phi_{\mb{V}} \circ F_i^{\Omega}(L)=F_i^{\Omega'} \circ \Phi_{\mb{W}}(L)$. When $k=\theta(i)$, we can prove the claim by the same way. \\
(2) \ We may suppose $\Omega \backslash \Omega'=\{h,\theta(h)\}$ and put $\vo(h)=k,\vi(h)=\ell$. \\
\quad We considet the following diagram:
\[
\xymatrix{
\tEO{W}{\Omega}\ar[d] & \tEE{W,V}{\Omega}\ar[l]_{p}\ar[r]^{\iota}\ar[d] & \tEO{V}{\Omega}\ar[d] \\
\tEO{W}{\Omega \cap \Omega'} & \tEE{W,V}{\Omega \cap \Omega'}\ar[l]\ar[r] & \tEO{V}{\Omega \cap \Omega'} \\
\tEO{W}{\Omega'}\ar[u] & \tEE{W,V}{\Omega'}\ar[l]_{p'}\ar[r]^{\iota'}\ar[u] & \tEO{V}{\Omega'}\ar[u] \\
}
\]
If $k,\ell \neq i,\theta(i)$, the above four diagrams are cartesian. Then the commutativity is clear. \\
\quad When $k=i$, we consider the two fiber products by the following:
\[
\xymatrix{
\tEO{W}{\Omega}\ar[dd] & & \tEE{W,V}{\Omega}\ar[ll]_{p}\ar[rr]^{\iota}\ar[dd]\ar@{}[ddrr]|\Box\ar[dl]_{q_2} & & \tEO{V}{\Omega}\ar[dd] \\
 & E\ar[dr]\ar[ul]_{q_1}\ar@{}[dl]|\Box & & & \\
\tEO{W}{\Omega \cap \Omega'} & & \tEE{W,V}{\Omega \cap \Omega'}\ar[ll]\ar[rr] & & \tEO{V}{\Omega \cap \Omega'} \\
 & & & E'\ar[ul]\ar[dr]_{r_1}\ar@{}[ur]|\Box & \\
\tEO{W}{\Omega'}\ar[uu]\ar@{}[uurr]|\Box & & \tEE{W,V}{\Omega'}\ar[ll]_{p'}\ar[rr]^{\iota'}\ar[uu]\ar[ur]_{r_2} & & \tEO{V}{\Omega'}\ar[uu] \\
}
\]
where
\[
E\colon =\tEO{W}{\Omega} \times_{\tEO{W}{\Omega \cap \Omega'}} \tEE{W,V}{\Omega \cap \Omega'}, \quad 
E'\colon =\tEO{V}{\Omega'} \times_{\tEO{V}{\Omega \cap \Omega'}} \tEE{W,V}{\Omega \cap \Omega'}.
\]
We can regard $E$ and $E'$ as the dual vector bundle of $\tEE{W,V}{\Omega'}$ and $\tEE{W,V}{\Omega}$ on $\tEE{W,V}{\Omega \cap \Omega'}$ respectively. We can regard $r_2$ as the transpose of $q_2$. We denote by $\Phi$ and $\Phi'$ the Fourier transforms 
\begin{eqnarray*}
\Phi&:&\ms{D}_{(\bb{C}^\times)^{\Omega,\theta}\rm{-mono}}(E) \to \ms{D}_{(\bb{C}^\times)^{\Omega,\theta}\rm{-mono}}(\tEE{W,V}{\Omega'}),\\
\Phi'&:&\ms{D}_{(\bb{C}^\times)^{\Omega,\theta}\rm{-mono}}(\tEE{W,V}{\Omega}) \to \ms{D}_{(\bb{C}^\times)^{\Omega,\theta}\rm{-mono}}(E').
\end{eqnarray*}
\quad Then, for $K \in \tmQ{V}{\Omega}$ we have
\eqn
\Phi_{\mb{W}}(p_!\iota^*K)=p'_!\Phi((q_2)_!\iota^*K)=p'_!r_2^*\Phi'(\iota^*K)[d]=p'_!r_2^*r_1^*\Phi_{\mb{V}}(K)[d]=p'_!(\iota')^*\Phi_{\mb{V}}(K)[d],
\eneqn
where $d=\rank(E)-\rank(\tEE{W,V}{\Omega})$. If $\ell \neq \theta(i)$, we have $\rank(E)=\dim{\mb{W}_i}\dim{\mb{W}_\ell}$ and $\rank(\tEE{W,V}{\Omega})=\dim{\mb{V}_i}\dim{\mb{V}_\ell}$. Since $\mb{V}_\ell=\mb{W}_\ell$, we have $d=-\dim\mb{W}_\ell$. If $\ell=\theta(i)$, we have $\rank(E)=\dim{\Alt(\mb{W}_i,\mb{W}_{\theta(i)})}$ and $\rank(\tEE{W,V}{\Omega})=\dim{\Alt(\mb{V}_i,\mb{V}_{\theta(i)})}$. Then $d=-\dim\mb{W}_{\theta(i)}$. Since $\Omega \backslash \Omega'=\{i \to \ell,\theta(\ell) \to \theta(i)\}$, we have $d_{E^\Omega_i}-\dim{\mb{W}_\ell}=d_{E^{\Omega'}_i}$. Thus $\Phi_{\mb{W}} \circ E^\Omega_i(K)=E_i^{\Omega'} \circ \Phi_{\mb{V}}$.\\
\quad When $k=\theta(i)$, we obtain the following diagram:
\[
\xymatrix{
\tEO{W}{\Omega}\ar[dd]\ar@{}[ddrr]|\Box & & \tEE{W,V}{\Omega}\ar[ll]_{p}\ar[rr]^{\iota}\ar[dd]\ar[dr]^{\iota_2} & & \tEO{V}{\Omega}\ar[dd] \\
 & & & F\ar[dl]\ar[ur]\ar@{}[dr]|\Box& \\
\tEO{W}{\Omega \cap \Omega'} & & \tEE{W,V}{\Omega \cap \Omega'}\ar[ll]\ar[rr]\ar@{}[ddrr]|\Box & & \tEO{V}{\Omega \cap \Omega'} \\
 & F'\ar[ur]\ar[dl]\ar@{}[ul]|\Box& & & \\
\tEO{W}{\Omega'}\ar[uu] & & \tEE{W,V}{\Omega'}\ar[ll]_{p'}\ar[rr]^{\iota'}\ar[uu]\ar[ul]_{p'_2} & & \tEO{V}{\Omega'}\ar[uu] \\
}
\]
Here \[
F\colon =\tEO{V}{\Omega} \times_{\tEO{V}{\Omega \cap \Omega'}} \tEE{W,V}{\Omega \cap \Omega'}, \quad 
F'\colon =\tEO{W}{\Omega'} \times_{\tEO{W}{\Omega \cap \Omega'}} \tEE{W,V}{\Omega \cap \Omega'}.
\]
We regard $p'_2$ as the transpose of $\iota_2$. Hence we can prove the claim by the similar way. \\
(3) The claim follows from Proposition \ref{FD}(2) and the commutativity of $F_i$ and $\Phi_{\mb{V}}^{\Omega\Omega'}$.
\end{proof}
Similarly, we can prove the commutativity of $F_i^{(a)}$'s and the Fourier transforms. We omit the proof. 
\begin{prop}\label{prop-comdiv}
Let $\mb{W}$ and $\mb{V}$ be $\theta$-symmetric $I$-graded vector spaces such that $\wt(\mb{V})=\wt(\mb{W})+a(\alpha_i+\alpha_{\theta(i)})$. Let ${F_i^{(a)}}^\Omega$ and ${F_i^{(a)}}^{\Omega'}$ be the $a$-th divided powers with respect to $\theta$-orientations $\Omega$ and $\Omega'$, respectively. For a $\tG{W}$-equivariant semisimple obejct $L \in \tmQ{W}{\Omega}$, we have $\Phi_{\mb{V}}^{\Omega\Omega'} \circ {F_i^{(a)}}^{\Omega}(L) \cong {F_i^{(a)}}^{\Omega'} \circ \Phi_{\mb{W}}^{\Omega\Omega'}(L)$.

\end{prop}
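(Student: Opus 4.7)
The strategy is to follow the proof of Theorem~\ref{com-FD}(1) verbatim, with $\mb{T} = \mb{S}_i$ replaced by $\mb{T} = \mb{S}_i^a$. The definition $F_i^{(a)}(L) = \mb{1}_{\mb{S}_i^a} * L[d_a]$ is structurally identical to that of $F_i$, differing only in that the inducing $I$-graded vector space has dimension $a$ rather than $1$ at vertex $i$, together with a different overall cohomological shift. Hence the same convolution diagram
\[
\EO{T}{\Omega} \times \tEO{W}{\Omega} \xleftarrow{p_1} \tpE{\Omega} \xrightarrow{p_2} \tppE{\Omega} \xrightarrow{p_3} \tEO{V}{\Omega}
\]
with $\mb{T} = \mb{S}_i^a$ governs the construction, and the fiber-product diagram from the proof of Theorem~\ref{com-FD}(1) relating $\Omega$ and $\Omega'$ still applies.

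First I would form the fiber products $E_1, E_2, E_3, E'_1, E'_2, E'_3$ verbatim, and verify that $(E_1, E'_1)$ and $(E_2, E'_2)$ are pairs of dual vector bundles over $\tpE{\Omega \cap \Omega'}$, while $(E_3, E'_3)$ are dual vector bundles over $\tppE{\Omega \cap \Omega'}$. Applying Proposition~\ref{FD}(3) and (4) along this diagram in the same chain of equalities as in Theorem~\ref{com-FD}(1) yields an unshifted identity
\[
\Phi_{\mb{V}}^{\Omega\Omega'}(\mb{1}_{\mb{S}_i^a} * L) \cong \mb{1}_{\mb{S}_i^a} * \Phi_{\mb{W}}^{\Omega\Omega'}(L)\,[d_1 + d_2],
\]
where $d_j = \rank(E_j) - \rank(\tpE{\Omega})$ for $j = 1, 2$. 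It then remains to check that $d_1 + d_2 + d_a^\Omega - d_a^{\Omega'} = 0$ so that the definitional shifts of ${F_i^{(a)}}^\Omega$ and ${F_i^{(a)}}^{\Omega'}$ absorb the Fourier shift.

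This final cohomological-shift calculation is the only step that differs substantively from the case $a = 1$, and it is where I expect the main (purely combinatorial) difficulty to lie. Reducing to the situation $\Omega \setminus \Omega' = \{h, \theta(h)\}$ and, up to symmetry, $\vo(h) = i$, I would split according to whether $\vi(h) = \ell \neq \theta(i)$ or $\vi(h) = \theta(i)$. In the former case one finds $d_1 = \dim \mb{W}_i \dim \mb{W}_\ell - \dim \mb{V}_i \dim \mb{W}_\ell = -a\dim \mb{W}_\ell$ and $d_2 = 0$, which cancel the $a \dim \mb{W}_\ell$ contribution to $d_a$ coming from the sum $\sum_{\vo(h) = i} \dim \mb{W}_{\vi(h)}$. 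In the latter case $\theta(h) = h$ and the relevant rank difference is instead $\dim \Alt(\mb{V}_i) - \dim \Alt(\mb{W}_i) = a\dim \mb{W}_i + \binom{a}{2}$; this is precisely what is needed to absorb both the $a\dim \mb{W}_{\theta(i)}$ contribution and, crucially, the term $\tfrac{a(a-1)}{2}\,\sharp\{h \in \Omega \mid \vo(h) = i,\, \vi(h) = \theta(i)\}$ present in the definition of $d_a$ but absent for $a = 1$. Once these two bookkeeping identities are verified, the proposition follows by the same formal diagrammatic argument as in Theorem~\ref{com-FD}.
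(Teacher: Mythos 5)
Your proposal is correct and matches the approach the paper intends: the paper explicitly omits the proof with the remark that it is ``similar'' to Theorem~\ref{com-FD}, and running the same fiber-product/Fourier-transform diagram with $\mb{T} = \mb{S}_i^a$ is exactly the right move. The one genuinely new check is the shift bookkeeping you isolate: when $\Omega\setminus\Omega' = \{h\}$ with $h \colon i \to \theta(i)$ (so $\theta(h) = h$), the rank difference $\rank(E_1) - \rank(\tpE{\Omega}) = \dim\Alt(\mb{W}_i) - \dim\Alt(\mb{V}_i) = -a\dim\mb{W}_i - \binom{a}{2}$ exactly cancels the $a\dim\mb{W}_{\theta(i)} + \binom{a}{2}$ by which $d_a^\Omega$ exceeds $d_a^{\Omega'}$, and this is where the $\tfrac{a(a-1)}{2}\,\sharp\{h \in \Omega \mid \vo(h)=i,\,\vi(h)=\theta(i)\}$ term in the definition of $d_a$ earns its keep; your computation of this is right (modulo a sign convention, since you quote the negative of $d_1$), and the $\ell \neq \theta(i)$ case goes through with $d_1 = -a\dim\mb{W}_\ell$, $d_2 = 0$ just as you say.
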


\section{A Geometric Constuction of Symmetric Crystals}
\subsection{Grothendieck group}
For a $\theta$-orientation $\Omega$ and a $\theta$-symmetric and $I$-graded vector space $\mb{V}$, we define $\tK{V}{\Omega}$ as the Grothendieck group of $\tmQ{V}{\Omega}$. Namely $\tK{V}{\Omega}$ is generated by $(L)$ for $L \in \tmQ{V}{\Omega}$ with the relation $(L)=(L')+(L'')$ when $L \cong L' \oplus L''$. This is a $\bb{Z}[v,v^{-1}]$-module by $v(L)=(L[1])$ and $v^{-1}(L)=(L[-1])$ for $L \in \tmQ{V}{\Omega}$. Hence, $\tK{V}{\Omega}$ is a free $\bb{Z}[v,v^{-1}]$-module with a basis $\{(L) \ | \ L \in \tmP{V}{\Omega}\}$. For another $\theta$-symmetric and $I$-graded vector space $\mb{V}'$ such that $\ud{\mb{V}}=\ud{\mb{V}'}$, we have $\tK{V}{\Omega} \cong \tK{V'}{\Omega}$. We define 
\[
\tKK{\Omega}\colon =\bigoplus_{\mb{V}}\tK{V}{\Omega}
\]
where $\mb{V}$ runs over the isomorphism classes of $\theta$-symmetric $I$-graded vector spaces. For two $\theta$-orientations $\Omega$ and $\Omega'$, the Fourier transform induces an equivalence $\tmQ{V}{\Omega} \to \tmQ{V}{\Omega'}$ and the isomorphism $\tK{V}{\Omega} \isoto \tK{V}{\Omega'}$. Therefore $\tKK{\Omega} \cong \tKK{\Omega'}$. \\
\quad We set ${}^\theta\!K=\tKK{\Omega},{}^\theta\!\!\ms{P}_{\mb{V}}=\tmP{V}{\Omega}$. By Lemma \ref{FD-lem}, they are well-defined.

\subsection{Actions of $E_i$ and $F_i$}
The functors $E_i$ and $F_i^{(a)}$ induce the action on $\tKK{\Omega}$.Since $E_i$ and $F_i$ commute with the Fourier transforms, they also act on ${}^\theta\!K$. The submodule ${}^\theta\!K'\colon =\sum_{(\mb{i},\mb{a})}\bb{Z}[v,v^{-1}](\tLL{i}{a}{\Omega}) \subset {}^\theta\!K$ is stable by $E_i$ and $F_i$ by Proposition \ref{prop-ef}. We define
\[
T_i|_{\tK{V}{\Omega}}=v^{-(\al_i,\wt{\mb{V}})}\id_{\tK{V}{\Omega}}.
\]
\begin{prop}\label{prop:action}
The operators $E_i,F_i$ and $T_i \ (i \in I)$ regarded as operators on ${}^\theta\!K'$ satisfy
\eqn
E_iF_j-v^{-(\alpha_i,\alpha_j)}F_jE_i=\delta_{ij}+\delta_{\theta(i),j}T_i
\eneqn
and 
\eqn
T_iE_jT_i^{-1}=v^{(\alpha_i+\alpha_{\theta(i)},\alpha_j)}E_j, \quad 
T_iF_jT_i^{-1}=v^{(\alpha_i+\alpha_{\theta(i)},-\alpha_j)}F_j.
\eneqn
\end{prop}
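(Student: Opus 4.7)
The conjugation formulas for $T_i$ are immediate from the definition $T_i|_{\tK{V}{\Omega}}=v^{-(\al_i,\wt\mb{V})}\id$ together with the fact that $E_j$ lowers the weight by $\al_j+\al_{\theta(j)}$ and $F_j$ raises it by the same amount; the prefactor picked up under conjugation is $v^{(\al_i,\al_j+\al_{\theta(j)})}$, which equals $v^{(\al_i+\al_{\theta(i)},\al_j)}$ by $\theta$-invariance of the bilinear form (using $(\al_i,\al_{\theta(j)})=(\al_{\theta(i)},\al_j)$). I will just record this.

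The heart of the proposition is the commutator $E_iF_j-v^{-(\al_i,\al_j)}F_jE_i=\delta_{ij}+\delta_{\theta(i),j}T_i$. My plan is to adapt Lusztig's geometric proof of the ordinary $e_i'f_j$-$f_je_i'$ relation used for Theorem \ref{Lthm1}, while tracking the extra input coming from the involution $\theta$ and the skew-symmetric condition defining $\tEO{V}{\Omega}$. Fix $K\in\tmQ{Y}{\Omega}$ and $\theta$-symmetric spaces $\mb{V},\mb{W}$ with $\wt\mb{V}=\wt\mb{Y}+\al_j+\al_{\theta(j)}$ and $\wt\mb{W}=\wt\mb{V}-\al_i-\al_{\theta(i)}$. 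Unfolding the definitions of $E_i$ and $F_j$ and applying proper base change (using the smoothness of $p_1$ and the principal-bundle structure of $p_2$ from Lemma \ref{lem-pv}), I will rewrite $E_iF_jK$ as a pushforward along the correspondence
\[
Z\seteq\{(x,V')\in\tppE{\Omega}\mid \text{$U$ is $x$-stable}\},
\]
where $V'$ runs over the $x$-stable subspaces with $V'\supset(V')^\bot$, $V'/(V')^\bot\cong\mb{Y}$ and $\mb{V}/V'\cong\mb{S}_j$, and $U\subset\mb{V}$ is the fixed subspace (with $U\supset U^\bot$, $\mb{V}/U\cong\mb{S}_i$) used to define $E_i$. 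The dimension formula $\dim\tppE{\Omega}-\dim\tEO{W}{\Omega}=\dim\mb{W}_i+\sum_{h\in\Omega,\,\vo(h)=i}\dim\mb{W}_{\vi(h)}$ proved just before the definition of $E_i,F_i$ (together with the shifts $d_{F_i}$, $d_{E_i}$) will keep the cohomological bookkeeping clean.

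I then stratify $Z$ by the relative position of $V'$, $U$ and $U^\bot$:
\begin{enumerate}[{\rm(i)}]
\item the open dense stratum $Z_0$ on which $V'+U=\mb{V}$ and $U^\bot\not\subseteq V'$;
\item the closed stratum $Z_1=\{V'=U\}$, which is non-empty precisely when $i=j$ (since $\mb{V}/V'\cong\mb{S}_j$ and $\mb{V}/U\cong\mb{S}_i$ are simple);
\item the closed stratum $Z_2=\{U^\bot\subseteq V'\}$, which is a proper codimension-one subvariety precisely when $j=\theta(i)$, because $U^\bot$ lies in $\mb{V}_{\theta(i)}$ while $V'$ is a hyperplane only in $\mb{V}_j$.
\end{enumerate}
On $Z_0$ the intersection $V'\cap U$ simultaneously realises the $E_i$-subspace inside $V'$ and the $F_j$-subspace inside $\mb{V}/U^\bot$, exhibiting the open stratum as an affine bundle over the analogous correspondence that computes $F_jE_iK$; a dimension count combining this bundle rank with $d_{F_i}$ and $d_{E_i}$ will produce the Tate twist $v^{-(\al_i,\al_j)}$. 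On $Z_1$ the two operations cancel and the pushforward collapses to the identity on $K$, contributing $\delta_{ij}K$. On $Z_2$ the condition $U^\bot\subseteq V'$ combined with the skew-symmetric condition on $x\in\tEO{V}{\Omega}$ forces a canonical identification $V'/(V')^\bot\isoto U/U^\bot$, up to a scaling on the $\mb{V}_i$-coordinate; after tallying the relevant dimension shifts, this scaling amounts to the scalar $v^{-(\al_i,\wt\mb{V})}$ acting on $\tK{V}{\Omega}$, which is exactly $T_i$.

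The main genuine obstacle is the analysis of stratum $Z_2$: one needs to verify that the skew-symmetric condition really does force the identification of $V'/(V')^\bot$ with $U/U^\bot$, that $Z_2$ is non-empty only in the case $j=\theta(i)$, and that the affine-bundle dimension of $Z_2\to\tEO{W}{\Omega}$ combines with $d_{F_i}$, $d_{E_i}$ to give exactly the exponent $-(\al_i,\wt\mb{V})$ prescribed by $T_i$. Once the three contributions are evaluated and summed in $\tK{W}{\Omega}$, the desired commutation formula drops out.
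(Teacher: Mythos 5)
You take a genuinely different route from the paper. The paper proves the relation by a direct computation on the spanning set $\{\tLL{i}{a}{\Omega}\}$ of ${}^\theta\!K'$: using the explicit shift formulas from Proposition~\ref{prop-ef}, it expands $E_iF_j\tLL{i}{a}{\Omega}$ and $F_jE_i\tLL{i}{a}{\Omega}$ into direct sums of $\theta$-flag complexes, matches the summands, and checks that the shifts differ by $-(\alpha_i,\alpha_j)$; the $\delta_{ij}$ and $\delta_{\theta(i),j}T_i$ terms come from the extra summands with the new flag step at the boundary positions $k=1$ or $k=2m+1$. Your proposed Mackey-type stratification is conceptually appealing but, as set up, has a concrete error: the line $U^\bot$ sits in degree $\theta(i)$, while $V'$ is a hyperplane only in degree $j$, so for $j\neq\theta(i)$ one has $V'_{\theta(i)}=\mb{V}_{\theta(i)}\supseteq U^\bot$ identically on $Z$. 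With your definitions this forces $Z_0=\emptyset$ and $Z_2=Z$ for every $j\notin\{i,\theta(i)\}$, which is precisely the case where the relation demands that the whole correspondence reproduce $v^{-(\alpha_i,\alpha_j)}F_jE_iK$ with no $T_i$-term. The condition $U^\bot\not\subseteq V'$ cuts out a proper open set only when $j=\theta(i)$; the three cases $j\notin\{i,\theta(i)\}$, $j=i$, $j=\theta(i)$ have to be handled with distinct stratifications of $Z$, not one uniform three-piece partition.

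Beyond the setup, the evaluation of the $Z_2$ contribution is asserted rather than proved. There is no canonical identification $V'/(V')^\bot\isoto U/U^\bot$ coming from the skew-symmetry of $x$: when $j=\theta(i)$ these are hyperplanes in distinct degrees, and $U^\bot\subseteq V'$ yields (by nondegeneracy of the symmetric form) only the dual inclusion $(V')^\bot\subseteq U$, hence the chain $U^\bot,\,(V')^\bot\subseteq U\cap V'$ and a mutual filtration, not an isomorphism of subquotients. What must actually be shown is that $Z_2$ is an affine bundle over the restriction correspondence through $U/U^\bot\cong\mb{W}$, and that the accumulated cohomological shift, combined with $d_{E_i}$ and $d_{F_j}$, equals the exponent that defines $T_i$ on the relevant $\tK{W}{\Omega}$; this dimension count, the analogue of the unnamed lemma computing $d_{p_1}-d_{p_2}$ just before the definitions of $E_i$ and $F_i$, is the real content of the $\delta_{\theta(i),j}T_i$ term and is absent. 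The Tate twist $v^{-(\alpha_i,\alpha_j)}$ on the open stratum likewise requires an explicit bundle-rank computation, not a promise of one.
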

\begin{proof}
We take $\theta$-symmetric $I$-graded vector spaces $\mb{W},\mb{V},\mb{U}$ and $\mb{X}$ such that $\wt(\mb{V})=\wt(\mb{W})+\alpha_j+\alpha_{\theta(j)}$, $\wt(\mb{U})=\wt(\mb{W})+\alpha_i+\alpha_{\theta(i)}$ and $\wt(\mb{X})=\wt(\mb{W})+\alpha_j+\alpha_{\theta(j)}+\alpha_{i}+\alpha_{\theta(i)}$. We consider the following diagram:
\[
\xymatrix{
\ms{D}(\tEO{W}{\Omega})\ar[r]^{F_j}\ar[d]_{E_i} & \ms{D}(\tEO{V}{\Omega})\ar[d]^{E_i} \\
\ms{D}(\tEO{U}{\Omega})\ar[r]_{F_j} & \ms{D}(\tEO{X}{\Omega})
}
\]
\quad First, we have
\eqn
E_iF_j\tLL{i}{a}{\Omega}&=&\delta_{ij}{}^\theta\!{L}_{(i,\mb{i},\theta(i)),(0,\mb{a},0);\Omega}[c_\mb{a}]\oplus\delta_{\theta(i),j}{}^\theta\!L_{(\theta(i),\mb{i},i),(0,\mb{a},0);\Omega}[c_{\mb{a}_\theta}]\oplus\bigoplus_{\mb{a}'}{}^\theta\!L_{(j,\mb{i},\theta(j)),(1,\mb{a}',1);\Omega}[c_{\mb{a'}}],
\eneqn
where
\eqn
c_\mb{a}&=&\dim{\mb{W}}_i+\sum_{i \to \eta}\dim{\mb{W}_\eta}-\dim{\mb{X}}_i+\sum_{i \to \xi}\dim{\mb{X}_\xi}-2M_1((i,\mb{i},\theta(i)),(0,\mb{a},0)), \\
c_{\mb{a}_\theta}&=&\dim{\mb{W}_{\theta(i)}}+\sum_{\theta(i) \to \eta}\dim{\mb{W}_\eta}-\dim{\mb{X}_i}+\sum_{i \to \xi}\dim{\mb{X}_\xi}-2M_{2m+1}((\theta(i),\mb{i},i),(0,\mb{a},0)), \\
c_{\mb{a'}}&=&\dim{\mb{W}_j}+\sum_{j \to \eta}\dim{\mb{W}_j}-\dim{\mb{X}_i}+\sum_{i \to \xi}\dim{\mb{X}_\xi}-2M_{k+1}((j,\mb{i},\theta(j)),(1,\mb{a}',1)).
\eneqn
Here $\mb{a}'$ runs over the sequences $\mb{a}^{(k)} \ (1 \le k \le m,i_k=i,\theta(i))$. \\
 \quad If $i=j$, we have $c_{\mb{a}}=0$ by $\mb{W}=\mb{X}$ and 
\[
M_1((i,\mb{i},\theta(i)),(0,\mb{a},0))=\sum_{i \to i_\ell}a_\ell=\sum_{i \to \eta}\dim{\mb{W}_\eta}.
\]
\quad If $\theta(i)=j$, we have
\[
c_{\mb{a}_\theta}=\sum_{\eta \to i}\dim{\mb{W}_\eta}+\sum_{i \to \eta}\dim{\mb{W}_\eta}-2\dim{\mb{W}_i}=-(\alpha_i,\wt(\mb{V}))
\]
by $\mb{W}=\mb{X}$, 
\[
M_{2m+1}((\theta(i),\mb{i},i),(0,\mb{a},0))=\sum_{i_\ell=i}a_\ell=\dim{\mb{W}_i}
\]
and $\sum_{\theta(i) \to \eta}\dim{\mb{W}_\eta}=\sum_{\eta \to i}\dim{\mb{W}_\eta}$. \\
\quad On the other hand, we have
\[
F_jE_i\tLL{i}{a}{\Omega}=\bigoplus_{\mb{a}'}{}^\theta\!L_{(j,\mb{i},\theta(j)),(1,\mb{a}',1);\Omega}[d_{\mb{a'}}],
\]
where
\eqn
d_{\mb{a}'}=-\dim{\mb{U}_i}+\sum_{i \to \xi}\dim{\mb{U}_\xi}+\dim{\mb{U}_j}+\sum_{j \to \eta}\dim{\mb{U}_\eta}-2M_k(\mb{i},\mb{a}').
\eneqn
and $\mb{a}'$ runs over the sequences $\mb{a}^{(k)} \ (1 \le k \le m,i_k=i,\theta(i))$. \\
\quad We have 
\[
M_{k+1}((j,\mb{i},\theta(j)),(1,\mb{a}',1))-M_k(\mb{i},\mb{a}')=\left\{
\begin{array}{ll}
1+\sharp\{i \stackrel{\Omega}{\longrightarrow} \theta(i)\} & (j=i) \\
0 & (j=\theta(i)) \\
\sharp\{i \stackrel{\Omega}{\longrightarrow} \theta(j)\} & (j \neq i,\theta(i))
\end{array}
\right.
\]
and 
\eqn
&{}&\left(\dim{\mb{W}_j}+\sum_{j \to \eta}\dim{\mb{W}_j}-\dim{\mb{X}_i}+\sum_{i \to \xi}\dim{\mb{X}_\xi}\right)\\
&{}& \quad \quad -\left(-\dim{\mb{U}_i}+\sum_{i \to \xi}\dim{\mb{U}_\xi}+\dim{\mb{U}_j}+\sum_{j \to \eta}\dim{\mb{U}_\eta}\right) \\
&=&\left\{
\begin{array}{ll}
2\sharp\{i \stackrel{\Omega}{\longrightarrow} \theta(i)\} & (j=i)\\
\sharp\{i \stackrel{\Omega}{\longrightarrow} \theta(i)\}+\sharp\{\theta(i) \stackrel{\Omega}{\longrightarrow} i\} & (j=\theta(i)) \\
\sharp\{i \stackrel{\Omega}{\longrightarrow} j\}+\sharp\{j \stackrel{\Omega}{\longrightarrow} i\}+2\sharp\{i \stackrel{\Omega}{\longrightarrow} \theta(j)\} & (j \neq i,\theta(i))
\end{array}
\right..
\eneqn
Thus $c_{\mb{a}'}-d_{\mb{a}'}=-(\alpha_i,\alpha_j)$. We conclude 
\[
E_iF_j(\tLL{i}{a}{\Omega})-v^{-(\alpha_i,\alpha_j)}F_jE_i(\tLL{i}{a}{\Omega})=\delta_{ij}(\tLL{i}{a}{\Omega})+\delta_{\theta(i),j}T_i(\tLL{i}{a}{\Omega}).
\]
\quad The relations $T_iE_jT_i^{-1}=v^{(\alpha_i+\alpha_{\theta(i)},\alpha_j)}E_j$ and $T_iF_jT_i^{-1}=v^{(\alpha_i+\alpha_{\theta(i)},-\alpha_j)}F_j$ are obvious.
\end{proof}

\subsection{Key estimates of coefficients}
Let $\Omega$ be a $\theta$-orientation and suppose that a vertex $i$ is a sink. For a $\theta$-symmetric $I$-graded vector space $\mb{V}$  and $r \in \bb{Z}_{\ge 0}$, we define
\[
\tEO{V}{\Omega,r}\colon =\left\{
x \in \tEO{V}{\Omega}\left|\dim\Coker\left(
\bigoplus_{h \in \Omega;\vi(h)=i}\mb{V}_{\vo(h)} \to \mb{V}_i
\right)=r
\right.
\right\}.
\]
Then we have $\tEO{V}{\Omega}=\sqcup_{r \ge 0}\tEO{V}{\Omega,r}$, and $\tEO{V}{\Omega,\ge{r}}\colon =\sqcup_{r' \ge r}\tEO{V}{\Omega,r'}$ is a closed subset of $\tEO{V}{\Omega}$. 
\begin{dfn}
For $L \in {}^\theta\!\!\ms{P}_{\mb{V}}$ and $i \in I$, choose a $\theta$-orientation $\Omega$ such that $i$ is a sink with respect to $\Omega$, and regard $L$ as an element of $\tmP{V}{\Omega}$. We define $\varepsilon_i(L)$ as the largest integer $r$ satisfying $\Supp(L) \subset \tEO{V}{\Omega,\ge{r}}$. This does not depend on the choice of $\Omega$. 
\end{dfn}
Note that $0 \le \varepsilon_i(L) \le \dim{V_i}$. \\
\quad We shall prove the following key estimates with respect to $F_i(L)$ and $E_i(L)$.
\begin{thm}\label{est}
Assume that $\theta$-symmetric and $I$-graded vector spaces $\mb{V}$ and $\mb{W}$ satisfy $\wt{\mb{V}}=\wt{\mb{W}}+\alpha_i+\alpha_{\theta(i)}$. Fix a $\theta$-orientation $\Omega$ such that the vertex $i$ is a sink.
\benu[{\rm(1)}]
\item For $L \in \tmP{W}{\Omega}$, there exists a unique simple perverse sheaf $L_0 \in \tmP{V}{\Omega}$ such that $\varepsilon_i(L_0)=\varepsilon_i(L)+1$ and 
\[
F_i(L)=[\varepsilon_i(L)+1]_v(L_0)+\sum_{L' \in \tmP{V}{\Omega}\colon \varepsilon_i(L')>\varepsilon_i(L)+1}a_{L'}(L')
\]
for $a_{L'} \in v^{2-\varepsilon_i(L')}\bb{Z}[v]$. \\
\quad We define the map $\tFi\colon {}^\theta\!\!\ms{P}_{\mb{W}}\cong\tmP{W}{\Omega} \to \tmP{V}{\Omega}\cong{}^\theta\!\!\ms{P}_{\mb{V}}$ by $\tFi(L)=L_0$. It does not depend on the choice of $\Omega$. \\
\item Let $K \in \tmP{V}{\Omega}$. If $\varepsilon_i(K)>0$, there exists a unique simple perverse sheaf $K_0 \in \tmP{W}{\Omega}$ such that $\varepsilon_i(K_0)=\varepsilon_i(K)-1$ and 
\[
E_i(K)=v^{1-\varepsilon_i(K)}(K_0)+\sum_{K' \in \tmP{W}{\Omega}\colon \varepsilon_i(K')>\varepsilon_i(K)-1}b_{K'}(K')
\]
for $b_{K'} \in v^{-\varepsilon_i(K')+1}\bb{Z}[v]$. Here we regard $K_0=0$ if $\varepsilon_i(K)=0$. \\
\quad We define the map $\tEi\colon {}^\theta\!\!\ms{P}_{\mb{V}}\cong\tmP{V}{\Omega} \to \tmP{W}{\Omega} \sqcup \{0\} \cong {}^\theta\!\!\ms{P}_{\mb{W}}\sqcup \{0\}$ by $\tEi(K)=K_0$ if $\varepsilon_i(K)>0$ and $\tEi(K)=0$ if $\varepsilon_i(K)=0$. It does not depend on the choise of $\Omega$.
\eenu
\end{thm}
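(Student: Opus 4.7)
The plan is to follow Lusztig's strategy for the analogous estimate on $K_\Omega$, adapted to the skew-symmetric setting. First I would reduce to the case that $i$ is a sink of $\Omega$. Such an orientation exists by assumption (d) in Definition \ref{def-tq}, and by Theorem \ref{com-FD} the functors $F_i$ and $E_i$ commute with Fourier transforms. Since $\varepsilon_i$ is defined independently of $\Omega$, both the identities to be proved and the claimed leading and error terms transport between any two $\theta$-orientations under the Fourier transform, so it suffices to treat one favourable $\Omega$.

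Next I would carry out the key geometric analysis. Given $x \in \tEO{V}{\Omega,r}$ and $\mathbf{T} = \mathbf{S}_i$, the fibre of $p_3\colon \tppE{\Omega} \to \tEO{V}{\Omega}$ over $x$ consists of $x$-stable $I$-graded subspaces $V \subset \mathbf{V}$ with $\wt(\mathbf{V}/V) = \alpha_i$ and $V \supset V^\bot$. The weight condition forces $V_j = \mathbf{V}_j$ for $j \neq i$ and $V_i \subset \mathbf{V}_i$ of codimension one; the condition $V \supset V^\bot$ is automatic, since $V^\bot$ is concentrated in degree $\theta(i)$, where $V$ is full. Because $i$ is a sink (so $\theta(i)$ is a source), the $x$-stability of $V$ reduces to $V_i \supset \sum_{h \in \Omega,\, \vi(h)=i} x_h(\mathbf{V}_{\vo(h)})$, and this image has codimension $r$ in $\mathbf{V}_i$. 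Hence the fibre is $\mathbb{P}^{r-1}$, and the induced element $x|_{V/V^\bot}$ lies in $\tEO{W}{\Omega,r-1}$. A parallel analysis of $\iota\colon \tEE{W,V}{\Omega} \hookrightarrow \tEO{V}{\Omega}$ shows that, for any $x$-stable $U$ with $\mathbf{V}/U \cong \mathbf{S}_i$, one again has $x|_{U/U^\bot} \in \tEO{W}{\Omega,r-1}$.

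For (1), set $\varepsilon = \varepsilon_i(L)$, so $\Supp(L) \subset \tEO{W}{\Omega,\ge \varepsilon}$. Decomposing $F_i(L) = (p_3)_! K''[d_{F_i}]$ along the stratification of $\tEO{V}{\Omega}$, only strata with $r \ge \varepsilon+1$ contribute. On the top stratum $r = \varepsilon+1$, restricting $p_3$ to the locus where $x|_{V/V^\bot}$ lies in the smooth open part of $\Supp(L) \cap \tEO{W}{\Omega,\varepsilon}$ gives a Zariski locally trivial $\mathbb{P}^{\varepsilon}$-bundle. Combining with the decomposition theorem and the cohomology of projective space yields the leading contribution $[\varepsilon+1]_v(L_0)$, where $L_0$ is the minimal extension of the simple local system induced from $L$; it is uniquely characterised by $\varepsilon_i(L_0) = \varepsilon+1$. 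On deeper strata $r > \varepsilon+1$, the cohomology of $\mathbb{P}^{r-1}$ tensored with the fibrewise restriction of $L$, bounded above in cohomological degree by perversity of each candidate $L'$ with $\varepsilon_i(L') = r$, forces the coefficient of $L'$ to lie in $v^{2-r}\mathbb{Z}[v]$. Part (2) follows by applying the same stratification argument to $p_!\iota^*$ with the shift $d_{E_i}$; the dual fibre computation produces the leading term $v^{1-\varepsilon_i(K)}(K_0)$ and error bounds in $v^{1-r}\mathbb{Z}[v]$. Independence of $\tFi$ and $\tEi$ from the choice of $\Omega$ is then automatic from Theorem \ref{com-FD}.

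The main obstacle will be the precise bookkeeping of perversity shifts. The skew-symmetric geometry contributes $\mathrm{Alt}$-type terms to $\dim \tEO{V}{\Omega}$ and $\dim \tppE{\Omega}$ coming from arrows $h$ with $\vo(h) = i$ and $\vi(h) = \theta(i)$. One must verify that these shifts, combined with the normalising shifts $d_{F_i}$ and $d_{E_i}$, line up to give exactly $[\varepsilon+1]_v$ as the leading coefficient, and that contributions from deeper strata fall into the sharp $v^{2-r}\mathbb{Z}[v]$ (resp.\ $v^{1-r}\mathbb{Z}[v]$) range required by Theorem \ref{cricry}. This mirrors Lusztig's original estimate but must be redone with the $\theta$-symmetric shifts and with extra attention to the special behaviour at arrows joining $i$ and $\theta(i)$.
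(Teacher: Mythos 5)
Your geometric picture — fibres of $p_3$ over $\tEO{V}{\Omega,r}$ are $\mb{P}^{r-1}$, the leading term comes from equivariance plus the projective-bundle formula on the top stratum — matches the paper's, and so does the reduction to a sink orientation via Theorem \ref{com-FD}. But the plan as written never actually establishes the sharp error bound, which is the whole point of the theorem. The projective-bundle formula gives $(p_3)_*p_3^*J_0[r-1]=[r]_vJ_0$ on a stratum with $\varepsilon_i=r$, whose lowest term is $v^{-(r-1)}=v^{1-r}$; so the naive estimate only yields $a_{L'}\in v^{1-r}\bb{Z}[v]$, one power of $v$ short of the claimed $v^{2-r}\bb{Z}[v]$. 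That missing power is exactly what conditions (\ref{c1})--(\ref{c6}) of Theorem \ref{cricry} require (the strict inequality in (5) and (6)), and your phrase ``bounded above in cohomological degree by perversity'' does not supply it: perversity alone gives $\rHom\in\ms{D}^{\ge 0}$, not $\ms{D}^{>0}$. The paper closes this gap with a specific argument: restrict $\rHom((p_3)_*L'',L')$ to $\tEO{V}{\Omega,\le\varepsilon_i(L')}$, identify it via adjunction and the $\mb{P}^{\varepsilon_i(L')-1}$-bundle structure with $(p_3)_*\rHom(L'',p_3^*L'[\varepsilon_i(L')-1])[\varepsilon_i(L')-1]$, then combine Lemma \ref{lem:hom} with the observation that $\Supp(L'')\supsetneq\Supp(p_3^*L')$ — forcing $H^0(\rHom)=0$ between two non-isomorphic simple perverse sheaves — to land in $\ms{D}^{>1-\varepsilon_i(L')}$. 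Without that strict-support step the theorem is not proved.

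One further simplification worth knowing: for part (2) you propose to redo the stratification argument for $p_!\iota^*$, but the paper instead exploits that when $i$ is a sink $\tEE{W,V}{\Omega}\cong\tEO{W}{\Omega}$, giving a section $s$ of $p_1$ and hence $E_iK=s^*q^*K[-\dim\mb{W}_i]$ with $q=p_3\circ p_2$; this turns $E_i$ into a pure pullback along a smooth map over the top stratum, and the error bound for (2) is then extracted from an $\rHom$ computation against $F_i(K')$, reusing the estimate of part (1). This is leaner than rerunning the pushforward analysis and is the natural route if you want the two bounds to cohere with the crystal criterion.
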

\begin{proof}
(1) \ We consider the diagram 
\[
\xymatrix{
\tEO{W}{\Omega} & \tpE{\Omega}\ar[r]_{p_2}\ar[l]^(.3){p_1} & \tppE{\Omega}\ar[r]_{p_3} & \tEO{V}{\Omega}.
}
\]
Since $i$ is a sink, we have $p_1^{-1}(\tEO{W}{\Omega,r})=p_2^{-1}p_3^{-1}(\EO{V}{\Omega,r+1})$  for any integer $r$. Especially, for $L \in \tmP{W}{\Omega}$, $p_3p_2(p_1^{-1}\Supp{L}) \subset \tEO{V}{\Omega,\ge\varepsilon_i(L)+1}$. For $r$, set $\tppE{\Omega,r}=p_3^{-1}(\tEO{V}{\Omega,r})$. Then $p_2^{-1}(\tppE{\Omega,r})=p_1^{-1}(\tEO{W}{\Omega,r-1})$. We set $\tppE{\Omega,\le r}\colon =\cup_{r'\le r}\tppE{\Omega,r'}$. Then $\tppE{\Omega,\le r}$ is an open subset of $\tppE{\Omega}$. If $p_3(x,V)=x \in \tEO{V}{\Omega,r}$, $V_i$ is a one-codimensional subspace of $\mb{V}_i$ which contains the $(\dim\mb{V}_i-r)$-dimensinal subspace $\sum_{\vi(h)=i}\Ig{x_h}$ of $\mb{V}_i$. Therefore $\tppE{\Omega,r} \to \tEO{V}{\Omega,r}$ is a $\mb{P}^{r-1}$-bundle. For $L \in \tmP{W}{\Omega}$, there is a unique simple perverse sheaf $L'' \in \ms{D}(\tppE{\Omega})$ such that $p_1^*L[d_{p_1}-d_{p_2}]=p_2^*L''$ and $(p_3)_*L''=F_i(L)$. For $x \in \tEO{V}{\Omega,\varepsilon_i(L)+1}$, the action of the stabilizer ${}^\theta\!G_{\mb{V},x} \subset \tG{V}$ of $x$ on $p_3^{-1}(x)$ is transitive. Since $L''$ is $\tG{V}$-equivariant, $L''$ is a constant sheaf on any fibers of $p_3$ over $\tEO{V}{\Omega,\varepsilon_i(L)+1}$. \\
\quad We restrict $L''$ to the open subset $\tppE{\Omega,\le \varepsilon_i(L)+1}$. There exists a nuique simple perverse sheaf $J_0$ on $\tEO{V}{\Omega,\le\varepsilon_i(L)+1}$ such that $L''|_{\tppE{\Omega,\le \varepsilon_i(L)+1}}=p_3^*J_0[\varepsilon_i(L)]$. Hence $(p_3)_*L''|_{\tppE{\Omega,\le \varepsilon_i(L)+1}}=(p_3)_*p_3^*J_0[\varepsilon_i(L)]=[\varepsilon_i(L)+1]_vJ_0$. Let $L_0$ be the minimal extension of $J_0$. Then $L_0$ is a simple perverse sheaf on $\tEO{V}{\Omega}$. Since $F_iL$ is semisimple, we have 
\[
F_i(L)=[\varepsilon_i(L)+1]_v(L_0)+\sum{a_{L'}(L')},
\]
where $L' \in \tmP{V}{\Omega}$ satisfies $\Supp(L') \subset \tEO{V}{\Omega,>\varepsilon_i(L)+1}$, or $\varepsilon_i(L')>\varepsilon_i(L)$. \\
\quad To prove $a_{L'} \in v^{2-\varepsilon_i(L')}\bb{Z}[v]$, we restrict $\rHom((p_3)_*L'',L')$ to the open subset $\tEO{V}{\Omega,\le \varepsilon_i(L')}$. Write $F_iL=\oplus_{J \in \tmP{V}{\Omega}}J \otimes M_J$, where $M_{J} \in \ms{D}(\pt)$ is the multiplicity space of $J$ in the expansion of $F_iL$. Then
\begin{eqnarray*}
\rHom((p_3)_*L'',L')|_{\tEO{V}{\Omega,\le \varepsilon_i(L')}}&=&\bigoplus_{J}\rHom(J,L')|_{\tEO{V}{\Omega,\le \varepsilon_i(L')}} \otimes M_{J}^* \\
&\supset& \rHom(L',L')|_{\tEO{V}{\Omega,\le \varepsilon_i(L')}} \otimes M_{L'}^*,
\end{eqnarray*}
\quad On the other hand, since $p_3$ is a $\mb{P}^{\varepsilon_i(L')-1}$-bundle on $\tEO{V}{\Omega,\varepsilon_i(L')}$ and $\Supp(L') \cap \tEO{V}{\Omega, \le \varepsilon_i(L')} \subset \tEO{V}{\Omega,\varepsilon_i(L')}$, we have
\begin{eqnarray*}
&{}&\rHom((p_3)_*L'',L')|_{\tEO{V}{\Omega,\le \varepsilon_i(L')}} \\
&=&(p_3)_*\rHom(L'',p_3^!L')|_{p_3^{-1}(\tEO{V}{\Omega,\le \varepsilon_i(L')})} \\
&=&(p_3)_*\rHom(L'',p_3^*L'[\varepsilon_i(L')-1])|_{p_3^{-1}(\tEO{V}{\Omega,\le \varepsilon_i(L')})}[\varepsilon_i(L')-1].
\end{eqnarray*}
Since $p_3^*L'[\varepsilon_i(L')-1]|_{p_3^{-1}(\tEO{V}{\Omega,\le \varepsilon_i(L')})}$ is a perverse sheaf, we have 
\[
\rHom(L'',p^*L'[\varepsilon_i(L')-1])|_{p_3^{-1}(\tEO{V}{\Omega,\le \varepsilon_i(L')})} \in \ms{D}^{\ge 0}
\]
by Lemma \ref{lem:hom}. Moreover since $\Supp(L'') \supsetneq \Supp(p_3^*L')$, we have 
\[
H^0(\rHom(L'',p_3^*L'[\varepsilon_i(L')-1])|_{p_3^{-1}(\tEO{V}{\Omega,\le \varepsilon_i(L')})})=0. 
\]
Therefore $\rHom(L'',p^*L'[\varepsilon_i(L')-1])|_{p_3^{-1}(\tEO{V}{\Omega,\le \varepsilon_i(L')})} \in \ms{D}^{>0}(\pt)$ and its direct image of $p_3$ is contained in $\ms{D}^{>0}$. Thus we obtain $\rHom((p_3)_*L'',L')|_{\tEO{V}{\le \varepsilon_i(L')}} \in \ms{D}^{>1-\varepsilon_i(L')}$. \\
\quad Since $H^0(\rHom(L',L')) \neq 0$, we conclude $M_{L'}^* \in \ms{D}^{>1-\varepsilon_i(L')}(\pt)$. Hence $a_{L'} \in v^{2-\varepsilon_i(L')}\bb{Z}[v]$. \\
(2) \ Recall the following diagram:
\[
\xymatrix{
\EO{T}{\Omega} \times \tEO{W}{\Omega} & \tEE{W,V}{\Omega}\ar[l]_(.45){p}\ar@{^{(}->}[r]^(.6){\iota} & \tEO{V}{\Omega}
}.
\]
Since $i$ is a sink, for a fixed $x_\mb{W} \in \tEO{W}{\Omega}$, $x \in \tEO{V}{\Omega}$ is uniquely determined by the condition that $U$ is $x$-stable and $x$ induces $x_\mb{W}$ on $U/U^\bot \cong \mb{W}$. Therefore we have $\tEO{W}{\Omega} \cong \tEE{W,V}{\Omega}$. We have a section $s$ of $p_1\colon \tpE{\Omega} \to \tEO{W}{\Omega}$ by $x_\mb{W} \mapsto (x,U,\varphi_\mb{W})$ where $\varphi_{\mb{W}}\colon U/U^\bot \cong \mb{W}$ is a given isomorphism of $\theta$-symmetric $I$-graded vector spaces. We consider the following diagram:
\[
\xymatrix{
 \tpE{\Omega}\ar[d]^{p_1}\ar[r]^{q\colon =p_3 \circ p_2} & \tEO{V}{\Omega} \\
 \tEO{W}{\Omega}\ar[r]^(.4){\sim}\ar@/^1pc/[u]^s & \tEE{W,V}{\Omega}\ar[u]_{\iota}
}
\]
For $K \in \tmP{V}{\Omega}$, we have $E_iK=s^*q^*K[-\dim{\mb{W}_i}]$. Assume that $\varepsilon_i(K)>0$. Since $\Supp(K) \subset \tEO{V}{\Omega, \ge \varepsilon_i(K)}$, $K|_{\tEO{V}{\Omega,\varepsilon_i(K)}}$ is a simple perverse sheaf. Since $q$ is smooth on $\tEO{V}{\Omega,\varepsilon_i(K)}$, the restriction $q^*K[d_q]|_{q^{-1}(\tEO{V}{\Omega,\varepsilon_i(K)})}$ is a $\tG{V}$-equivariant perverse sheaf, where $d_q$ is the fiber dimension of $q$ on $\tEO{V}{\Omega,\varepsilon_i(K)}$. Note that $p_1$ is an affine bundle on $\tEO{W}{\Omega,\varepsilon_i(K)-1}$. If $x \in \tEE{W,V}{\Omega}$ induces $x_\mb{W} \in \tEO{W}{\Omega,\varepsilon_i(K)-1}$, the stabilizer ${}^\theta\!G_{\mb{V}}$ acts transitively on the fiber of $p_1$ at $x_{\mb{W}}$. Thus $q^*K[d_q]|_{q^{-1}(\tEO{V}{\Omega,\varepsilon_i(K)})}$ is constant on any fibers of $p_1$. Hence $s^*q^*K[d_q-d_{p_1}]|_{\tEO{W}{\Omega,\varepsilon_i(K)-1}}$ is a simple perverse sheaf. Here
\[
d_{p_1}-d_q=d_{p_1}-d_{p_2}-(\varepsilon_i(K)-1)=\dim\mb{W}_i+1-\varepsilon_i(K).
\]
Therefore we obtain 
\[
E_i(K)=v^{1-\varepsilon_i(K)}(K_0)+\sum_{K' \in \tmP{W}{\Omega}\colon \varepsilon_i(K')>\varepsilon_i(K)-1}b_{K'}(K'),
\]
where $K_0$ is the minimal extension of $s^*q^*K[d_q-d_{p_1}]|_{\tEO{W}{\Omega,\varepsilon_i(K)-1}}$. \\
\quad We shall prove $b_{K'} \in v^{1-\varepsilon_i(K')}\bb{Z}[v]$. \\
\quad Since $q^*K[-\dim{\mb{W}_i}]$ and $p_1^*E_iK$ are constant along the fibers of $p_1$, and $s^*q^*K[-\dim{\mb{W}_i}]=s^*p_1^*E_iK$, we obtain $q^*K[-\dim{\mb{W}_i}]=p_1^*E_iK$. We have $q^*K[-\dim{\mb{W}_i}]=\oplus_{K''}{p_1^*K'' \otimes M_{K''}}$, where $M_{K''}$ is the multiplicity space of $K''$ in $E_iK$. Since there is a unique semisimple object $L_{K''} \in \ms{D}(\tppE{\Omega})$ such that $p_1^*K''=p_2^*L_{K''}$, we have $p_2^*p_3^*K[-\dim{\mb{W}_i}]=\oplus_{K''}{p_2^*L_{K''} \otimes M_{K''}}$. We obtain $p_3^*K[-\dim{\mb{W}_i}]=\oplus_{K''}L_{K''} \otimes M_{K''}$. \\
\quad Therefore we have
\begin{eqnarray*}
&{}&\oplus_{K''}\mb{R}\!\Hom(L_{K''}|_{\tEO{V}{\Omega,\le\varepsilon_i(K')+1}},L_{K'}|_{\tEO{V}{\Omega,\le\varepsilon_i(K')+1}}) \otimes M_{K''}^* \\
&=&\mb{R}\!\Hom(p_3^*K[-\dim{\mb{W}_i}]|_{p_3^{-1}(\tEO{V}{\Omega,\le\varepsilon_i(K')+1})},L_{K'}|_{p_3^{-1}(\tEO{V}{\Omega,\le\varepsilon_i(K')+1})}) \\
&=&\mb{R}\!\Hom(K[-\dim{\mb{W}_i}]|_{\tEO{V}{\Omega,\le\varepsilon_i(K')+1}},(p_3)_*L_{K'}|_{\tEO{V}{\Omega,\le\varepsilon_i(K')+1}}) \\
&=&\mb{R}\!\Hom(K[-\dim{\mb{W}_i}]|_{\tEO{V}{\Omega,\le\varepsilon_i(K')+1}},F_i(K')|_{\tEO{V}{\Omega,\le\varepsilon_i(K')+1}}[-\dim{\mb{W}_i}]) \\
&=&\mb{R}\!\Hom(K|_{\tEO{V}{\Omega,\varepsilon_i(K')+1}},F_i(K')|_{\tEO{V}{\Omega,\le\varepsilon_i(K')+1}}).
\end{eqnarray*}
By the claim of (1), $F_iK'|_{\tEO{V}{\Omega,\le\varepsilon_i(K')}}=[\varepsilon_i(K')+1]_v\tFi{K'}|_{\tEO{V}{\Omega,\le\varepsilon_i(K')+1}} \in {}^p\!\ms{D}^{\ge -\varepsilon_i(K')}(\tEO{V}{\Omega,\le\varepsilon_i(K')+1})$. Since $\Supp(K) \subsetneq \Supp(F_iK')$, we have $\mb{R}\!\Hom(K|_{\tEO{V}{\Omega,\le\varepsilon_i(K')+1}},F_i(K')|_{\tEO{V}{\Omega,\le\varepsilon_i(K')+1}}) \in \ms{D}^{\ge 1-\varepsilon_i(K')}$. Therefore $\mb{R}\!\Hom(L_{K'},L_{K'})\otimes M_{K'}^* \in \ms{D}^{\ge1-\varepsilon_i(K')}$, which implies $M_{K'}^* \in \ms{D}^{\ge 1-\varepsilon_i(K')}$. Hence $b_{K'} \in v^{1-\varepsilon_i(K')}\bb{Z}[v]$ is proved. \\
\quad In the case $\varepsilon_i(K)=0$, we can prove similarly $b_{K'} \in v^{1-\varepsilon_i(K')}\bb{Z}[v]$.
\end{proof}

\begin{lem}\label{lem:eps} \ Suppose $\wt\mb{V} \neq 0$. For any $L \in \tmP{V}{\Omega}$, there exists $i \in I$ such that $\varepsilon_i(L)>0$.
\end{lem}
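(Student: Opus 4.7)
The plan is to induct on the number of applications of $F$-operators in an expression of $L$ as arising from $\mb{1}_{\pt}$, using Theorem~\ref{est}(1) to track the growth of $\varepsilon_i$ at each step.

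By the definition of $\tmP{V}{\Omega}$ and Lemma~\ref{cor-div}, the simple perverse sheaf $L$ is, up to a shift, a direct summand of $F_{i_1}^{(a_1)} F_{i_2}^{(a_2)} \cdots F_{i_m}^{(a_m)} \mb{1}_{\pt}$ for some symmetric sequence $(\mb{i},\mb{a})$. The constraints $i_{2m-\ell+1} = \theta(i_\ell)$ and $a_{2m-\ell+1} = a_\ell$ give
$\wt(\mb{V}) = \sum_{\ell=1}^{m} a_\ell(\alpha_{i_\ell} + \alpha_{\theta(i_\ell)})$,
so the assumption $\wt(\mb{V}) \neq 0$ forces some $a_\ell$ with $\ell \in \{1,\ldots,m\}$ to be strictly positive. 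Let $k$ be the smallest such index. Since an index $\ell$ with $a_\ell = 0$ can be dropped from $(\mb{i},\mb{a})$ without changing $\tLL{i}{a}{\Omega}$ (equivalently, $F_i^{(0)}=\id$ by convention), we may present $L$ as a simple summand, up to shift, of $F_j^{(a)}(M)$, where $j := i_k$, $a := a_k \geq 1$, and $M := F_{i_{k+1}}^{(a_{k+1})} \cdots F_{i_m}^{(a_m)} \mb{1}_{\pt}$.

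The next step is to replace the divided power $F_j^{(a)}$ by the ordinary power $F_j^{a}$. By Lemma~\ref{lem-df}, $F_j F_j^{(b)} = [b+1]_v F_j^{(b+1)}$, so iterating shows that $F_j^{a}(M)$ is a direct sum of shifts of $F_j^{(a)}(M)$ with strictly positive multiplicity. Hence $L$ is a simple summand of $F_j^{a}(M)$ up to shift. Now I iterate Theorem~\ref{est}(1): for any simple perverse sheaf $N$, every simple summand of $F_j(N)$ has $\varepsilon_j$-value at least $\varepsilon_j(N)+1$. Starting from the trivial lower bound $\varepsilon_j \geq 0$ on the simple summands of the semisimple object $M$, an induction on the number of applications of $F_j$ shows that every simple summand of $F_j^{a}(M)$ has $\varepsilon_j \geq a$. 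Applying this to $L$ yields $\varepsilon_{i_k}(L) \geq a \geq 1$, which proves the lemma with $i := i_k$.

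No real geometric obstacle is expected here; all the nontrivial content is already packaged in Theorem~\ref{est}(1). The only mildly subtle book-keeping is the translation between $F_j^{(a)}$ and $F_j^{a}$, which follows immediately from Lemma~\ref{lem-df}.
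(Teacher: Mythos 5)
Your proof is correct, but it takes a genuinely different route from the paper's. The paper's argument is purely geometric and shorter: after choosing $d$, $(\mb{i},\mb{a})$ with $L[d]$ a summand of $\tLL{i}{a}{\Omega}$, it assumes $a_1>0$ (dropping vanishing entries, exactly as you do), picks $\Omega$ so that $i_1$ is a sink, and then observes directly that $\Supp(L)\subset\Supp(\tLL{i}{a}{\Omega})\subset\tEO{V}{\Omega,\ge1}$. This last inclusion is immediate from the definition of $\ttFl{i}{a}{\Omega}$: for any $(x,F)$ there, the degree-$i_1$ part of the $x$-stable subspace $\mb{F}^1$ has codimension $a_1\ge1$ in $\mb{V}_{i_1}$, and since $i_1$ is a sink every $x_h$ with $\vi(h)=i_1$ maps $\mb{V}_{\vo(h)}=\mb{F}^1_{\vo(h)}$ into $\mb{F}^1_{i_1}$, forcing the cokernel to have dimension $\ge a_1$. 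Hence $\varepsilon_{i_1}(L)\ge a_1>0$ by the definition of $\varepsilon_i$, with no reference to Theorem~\ref{est}.

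Your route instead packages everything through Theorem~\ref{est}(1): you decompose $L$ via Lemma~\ref{cor-div}, trade the divided power $F_j^{(a)}$ for $F_j^a$ via Lemma~\ref{lem-df}, and iterate the estimate that every simple summand of $F_j(N)$ has $\varepsilon_j\ge\varepsilon_j(N)+1$. This is valid (Theorem~\ref{est} is proved independently of this lemma, so there is no circularity), and it has the minor virtue of being phrased entirely in the Grothendieck-group calculus. But it is heavier machinery than needed: you invoke a delicate theorem about expansion coefficients to reprove a statement that falls out of the elementary support containment $\Supp(\tLL{i}{a}{\Omega})\subset\tEO{V}{\Omega,\ge1}$. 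One small point to make explicit in your write-up: Theorem~\ref{est}(1) is stated with $j$ a sink in $\Omega$, so at each step you should either pass to such an orientation or invoke the $\Omega$-independence of $\varepsilon_j$ and of the structure constants (via Theorem~\ref{com-FD} and Proposition~\ref{prop-comdiv}) -- exactly the same book-keeping the paper sidesteps by choosing $\Omega$ once at the start.
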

\begin{proof}
If $\mb{V} \neq \{0\}$, there exists an integer $d$, $\mb{i}=(i_1, \ldots ,i_{2m})$ and $\mb{a}$ such that $L[d]$ appears in a direct summand of $\tLL{i}{a}{\Omega}$. We may assume $a_1>0$. Then, taking $\Omega$ such that $i_1$ is a sink, we have $\Supp(L) \subset \Supp(\tLL{i}{a}{\Omega}) \subset \tEO{V}{\Omega,\ge 1}$. By the definition of $\varepsilon_i$, we have $\varepsilon_{i_1}(L) \neq 0$. \end{proof}

\begin{lem}\label{lem:cef}
For $L \in \ms{P}_{\mb{V}}$, we have $\tEi\tFi(L)=(L)$, and if $\tEi(L) \neq 0$, we have $\tFi\tEi(L)=L$. 
\end{lem}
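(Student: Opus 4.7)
The plan is to apply the commutation relation from Proposition~\ref{prop:action} with $j=i$: since the paper assumes $\theta(i)\ne i$, it yields the $v$-boson identity
\[
E_iF_i - v^{-2}F_iE_i = 1
\]
in $\tKK{\Omega}$. Applied to $(L)$ with $L\in{}^\theta\!\!\ms{P}_{\mb{W}}$, this gives $E_iF_i(L) - v^{-2}F_iE_i(L) = (L)$, and the whole lemma falls out by comparing the coefficient of $(L)$ on both sides, using the precise expansions of Theorem~\ref{est}.

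Set $\ell\seteq\varepsilon_i(L)$. The first step is to compute the coefficient of $(L)$ in $E_iF_i(L)$. By Theorem~\ref{est}(1), $F_i(L) = [\ell+1]_v(\tFi L) + \sum_{L':\varepsilon_i(L')>\ell+1} a_{L'}(L')$. Applying $E_i$ and invoking Theorem~\ref{est}(2) term by term, every simple summand $(M)$ appearing either in $E_i(\tFi L)$ or in $E_i(L')$ satisfies $\varepsilon_i(M)\ge\varepsilon_i(\tFi L)-1 = \ell$, with equality only for $M = \tEi\tFi L$, or $\varepsilon_i(M)\ge\varepsilon_i(L')-1>\ell$ (strictly greater). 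Hence the only possible contribution to $(L)$ comes from the leading term $v^{-\ell}(\tEi\tFi L)$ inside $E_i(\tFi L)$, giving
\[
[\text{coeff. of }(L)\text{ in }E_iF_i(L)] \;=\; [\ell+1]_v\,v^{-\ell}\cdot\mathbf{1}[\tEi\tFi L = L].
\]

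Next, I compute the coefficient of $(L)$ in $F_iE_i(L)$. If $\ell = 0$ then $\tEi L = 0$ and every summand of $E_i(L)$ has $\varepsilon_i\ge 0$, so every summand of $F_iE_i(L)$ has $\varepsilon_i\ge 1>\ell$; the coefficient is $0$. If $\ell>0$, then $E_i(L) = v^{1-\ell}(\tEi L) + \sum b_{M'}(M')$ with $\varepsilon_i(M')\ge \ell$, and $F_i(\tEi L) = [\ell]_v(\tFi\tEi L) + \text{higher}$; again a direct check on $\varepsilon_i$-values shows only the term $[\ell]_v(\tFi\tEi L)$ can hit $L$, yielding
\[
[\text{coeff. of }(L)\text{ in }F_iE_i(L)] \;=\; v^{1-\ell}[\ell]_v\cdot\mathbf{1}[\tFi\tEi L = L].
\]

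Finally I assemble these. A short manipulation of quantum integers gives $[\ell+1]_v v^{-\ell} - v^{-1-\ell}[\ell]_v = 1$ for every $\ell\ge 0$. Since the coefficient of $(L)$ in $E_iF_i(L) - v^{-2}F_iE_i(L)$ must equal $1$, the expression
\[
[\ell+1]_v v^{-\ell}\cdot\mathbf{1}[\tEi\tFi L = L] \;-\; v^{-1-\ell}[\ell]_v\cdot\mathbf{1}[\tFi\tEi L = L] \;=\; 1
\]
forces $\tEi\tFi L = L$ (otherwise the left-hand side is either $0$ or a strictly negative multiple of quantum integers, never $1$); and when $\ell>0$ it then forces also $\tFi\tEi L = L$ (otherwise it reduces to $[\ell+1]_v v^{-\ell}\ne 1$). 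This proves both assertions. The only delicate point, rather than a genuine obstacle, is the case analysis keeping track of which summands of $E_iF_i(L)$ and $F_iE_i(L)$ can possibly equal $(L)$; this is controlled entirely by the strict $\varepsilon_i$-inequalities in Theorem~\ref{est}.
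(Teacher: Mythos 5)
Your proposal is a genuinely different route from the paper: the paper proves the lemma by a direct geometric argument, using the correspondence $\tEO{W}{\Omega} \xleftarrow{p_1} \tpE{\Omega} \xrightarrow{q} \tEO{V}{\Omega}$ and the section $s$ of $p_1$, restricting to the open strata $\tEO{V}{\Omega,\le\varepsilon_i(L)+1}$ and $\tEO{W}{\Omega,\le\varepsilon_i(K)-1}$ and taking minimal extensions. Your route instead tries to deduce the lemma algebraically from the $v$-boson relation and the estimates of Theorem~\ref{est} by comparing coefficients, which is appealingly short.

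However, there is a circularity gap. Proposition~\ref{prop:action} establishes the commutation relation $E_iF_i - v^{-2}F_iE_i = 1$ only as an identity of operators on the submodule ${}^\theta\!K' = \sum_{(\mb{i},\mb{a})}\bb{Z}[v,v^{-1}](\tLL{i}{a}{\Omega})$, not on all of $\tKK{\Omega}$: the paper's proof verifies the relation by a direct computation applied to each semisimple generator $\tLL{i}{a}{\Omega}$, and that is all the statement asserts. Your argument applies the commutation relation to $(L)$ for an arbitrary $L\in{}^\theta\!\!\ms{P}_{\mb{W}}$, i.e.\ to a single simple perverse sheaf class, which is an element of ${}^\theta\!K$ but is not known a priori to lie in ${}^\theta\!K'$. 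The fact that ${}^\theta\!K = {}^\theta\!K'$ (Theorem~\ref{mt1}(i), via Proposition~\ref{prop-gen}) is proved in the paper by an argument that uses Lemma~\ref{lem:cef} itself in an essential way, namely to show $\tFi^{\varepsilon_i(L)}\tEi^{\varepsilon_i(L)}L = L$, which in turn identifies the leading term of $F_i^{(\varepsilon_i(L))}(\tEi^{\varepsilon_i(L)}L)$. So invoking the commutation relation on $(L)$ silently presupposes the very lemma you are trying to prove. To close the gap you would need to establish the commutation relation directly on all of ${}^\theta\!K$ by a geometric argument on the functors $E_i$, $F_i$ acting on arbitrary objects of $\tmQ{V}{\Omega}$ — which is plausible but is additional work not present in the paper and not supplied in your proposal. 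Aside from that, the coefficient analysis itself (using the strict $\varepsilon_i$-inequalities from Theorem~\ref{est} to isolate the contribution of $(L)$, and the quantum-integer identity $[\ell+1]_vv^{-\ell} - v^{-1-\ell}[\ell]_v = 1$) is correct.
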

\begin{proof}
We assume that $i$ is a sink. \\
\quad Recall the following diagram:
\[
\xymatrix{
 \tpE{\Omega}\ar[d]^{p_1}\ar[r]^{q\colon =p_3 \circ p_2} & \tEO{V}{\Omega} \\
 \tEO{W}{\Omega}\ar[r]^(.4){\sim}\ar@/^1pc/[u]^s & \tEE{W,V}{\Omega}\ar[u]_{\iota}
}
\]
\quad For $L \in \tmP{W}{\Omega}$, take simple perverse sheaf $L'' \in \ms{D}(\tppE{\Omega})$ such that $p_1^*L[\dim\mb{W}_i]=p_2^*L''$ and $(p_3)_!L''=F_iL$, then $(p_3)_!L'' \cong [\varepsilon_i(L)+1]_v\tFi{L}$ on $\tEO{V}{\Omega,\le \varepsilon_i(L)+1}$. On the other hand, since $L'' \cong p_3^*\tFi{L}[\varepsilon_i(L)]$ on $p_3^{-1}(\tEO{V}{\Omega,\le \varepsilon_i(L)+1})$, we have $q^*\tFi{L} \cong p_2^*L''[-\varepsilon_i(L)]=p_1^*L[\dim\mb{W}_i-\varepsilon_i(L)]$ on $p_1^{-1}(\tEO{W}{\Omega,\le \varepsilon_i(L)})$. Then we have $s^*q^*\tFi{L}=L[\dim\mb{W}_i-\varepsilon_i(L)]$ on $\tEO{W}{\Omega,\le \varepsilon_i(L)+1}$. Note that $E_i\tFi{L}=s^*q^*\tFi{L}[-\dim\mb{W}_i]$. We obtain $E_i(\tFi{L})=L[-\varepsilon_i(L)]$ on $\tEO{W}{\Omega,\le \varepsilon_i(L)}$. Hence $\tEi\tFi(L)=(L)$. \\
\quad Conversely, take $K \in \tmP{V}{\Omega}$ such that $\varepsilon_i(K)>0$. By the similar argument in the proof of Theorem \ref{est}(2), we have $p_1^*E_iK=q^*K[-\dim\mb{W}_i]$. Hence we obtain $p_1^*\tEi{K}[\dim\mb{W}_i]=q^*K[\varepsilon_i(K)-1]$ on $p_1^{-1}(\tEO{W}{\Omega,\varepsilon_i(K)-1})$. Since $p_3^*K[\varepsilon_i(K)-1]$ is a simple perverse sheaf on $p_3^{-1}(\tEO{V}{\Omega,\varepsilon_i(K)})$, we have $F_i\tEi{K}=(p_3)_*p_3^*K[\varepsilon_i(K)-1]=[\varepsilon_i(K)]_vK$ on $\tEO{V}{\Omega,\le\varepsilon_i(K)}$. Then we have $\tFi\tEi(K)=(K)$. 
\end{proof}

\subsection{Verdier duality functor}

The Verdier duality functor $D\colon \ms{D}(\tEO{V}{\Omega}) \to \ms{D}(\tEO{V}{\Omega})$ satisfies $D(L[d])=D(L)[-d]$ for $L \in \ms{D}(\tEO{V}{\Omega})$, $d \in \bb{Z}$. Then $D$ induces the involution $v^{\pm{1}} \mapsto v^{\mp{1}}$.
\begin{prop}\label{prop-V} \ 
\benu[(i)]
\item $D(\tLL{i}{a}{\Omega})=\tLL{i}{a}{\Omega}[2\dim{}^\theta\tFl{i}{a}{\Omega}]$.
\item For any $L \in \tmQ{V}{\Omega}$, we have $D(F_iL)=F_iD(L)$.
\item For any $L \in \tmP{V}{\Omega}$, we have $D(L) \cong L$.
\eenu
\end{prop}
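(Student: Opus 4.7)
The plan is to prove (i), (ii), (iii) in that order. Parts (i) and (ii) are direct applications of Lemma \ref{lem-VD}, while (iii) is an induction on $\dim\mb{V}$ combining (ii) with the estimates of Theorem \ref{est} and the identities in Lemma \ref{lem:cef} and Lemma \ref{lem:eps}. For (i), since $\tpi{i}{a}$ is projective (Lemma \ref{lem:sym-proper}), $D$ commutes with $(\tpi{i}{a})_!$ by Lemma \ref{lem-VD}(ii); since $\ttFl{i}{a}{\Omega}$ is smooth of pure dimension $N := \dim\ttFl{i}{a}{\Omega}$, we have $D(\mb{1}_{\ttFl{i}{a}{\Omega}}) = \mb{1}_{\ttFl{i}{a}{\Omega}}[2N]$, and (i) follows immediately.

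For (ii), I would trace Verdier duality through the convolution diagram $\mb{E}_{\mb{S}_i,\Omega} \times \tEO{W}{\Omega} \xleftarrow{p_1} \tpE{\Omega} \xrightarrow{p_2} \tppE{\Omega} \xrightarrow{p_3} \tEO{V}{\Omega}$ defining $F_i$. Since $\mb{E}_{\mb{S}_i,\Omega}$ is a point, $\mb{1}_{\mb{S}_i}$ is Verdier self-dual. Because $p_1$ is smooth with connected fibers and $p_2$ is a principal bundle, applying $D$ to the defining relation $p_2^*K'' = p_1^*(\mb{1}_{\mb{S}_i} \boxtimes L)$ and comparing shifts from the two smooth pullbacks (with fiber dimensions $d_{p_1}$ and $d_{p_2}$) yields $D(K'') = M''[2(d_{p_1}-d_{p_2})]$, where $M''$ is the analogous complex built from $D(L)$. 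Since $p_3$ is projective and the shift $d_{F_i} = d_{p_1}-d_{p_2}$ built into the definition of $F_i$ cancels the discrepancy exactly, I conclude $D(F_iL) = F_iD(L)$.

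For (iii), I would argue by induction on $\dim\mb{V}$. The base case $\mb{V}=0$ is trivial since $L = \mb{1}_{\pt}$ is self-dual. For the inductive step, pick $i \in I$ with $\varepsilon_i(L) > 0$ via Lemma \ref{lem:eps}, and set $L_0 := \tEi L \in \tmP{W}{\Omega}$, where $\wt\mb{V} = \wt\mb{W} + \al_i + \al_{\theta(i)}$. Since $\dim\mb{W} < \dim\mb{V}$, induction gives $D(L_0) = L_0$, so by (ii) the complex $F_iL_0$ is Verdier self-dual. Applying Theorem \ref{est}(1) to $L_0$ and using $\tFi\tEi L = L$ from Lemma \ref{lem:cef}, we obtain
\[
F_iL_0 = [\varepsilon_i(L)]_v(L) + \sum_{L' \in \tmP{V}{\Omega}:\; \varepsilon_i(L') > \varepsilon_i(L)} a_{L'}(L')
\]
in $\tK{V}{\Omega}$, with $a_{L'} \in v^{2-\varepsilon_i(L')}\bb{Z}[v]$. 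Applying $D$ (which acts as $v \mapsto v^{-1}$ on the Grothendieck group and fixes $[\varepsilon_i(L)]_v$) and equating with the original expansion, I compare coefficients of $(L)$. Since $\varepsilon_i$ depends only on support, $\varepsilon_i(D(L')) = \varepsilon_i(L')$ for every simple perverse sheaf; hence no $L'$ appearing in the sum can satisfy $D(L') = L$, as this would force $\varepsilon_i(L) > \varepsilon_i(L)$. The coefficient of $(L)$ on the right-hand side can therefore match $[\varepsilon_i(L)]_v$ only if $D(L) = L$.

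The main obstacle is the inductive step in (iii): the argument depends critically on the strict inequality $\varepsilon_i(L') > \varepsilon_i(L)$ for all non-leading terms in Theorem \ref{est}(1), combined with the invariance of $\varepsilon_i$ under $D$, to isolate the coefficient of $(L)$ and force self-duality. Everything else is a careful but routine tracking of shifts through proper pushforwards and smooth pullbacks.
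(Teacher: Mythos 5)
Your proof is correct and follows essentially the same route as the paper: parts (i) and (ii) are the same applications of Lemma \ref{lem-VD} to the convolution diagram, and part (iii) uses the same induction on $\wt\mb{V}$ together with the expansion of $F_i(\tEi L)$ furnished by Theorem \ref{est} and Lemma \ref{lem:cef}, picking $i$ with $\varepsilon_i(L)>0$ via Lemma \ref{lem:eps}. The one place you diverge is in how (iii) is concluded: the paper restricts the $D$-invariant object $F_i(\tEi L)$ to the open subset $\tEO{V}{\Omega,\le\varepsilon_i(L)}$, where all the error terms $L'$ (having $\varepsilon_i(L')>\varepsilon_i(L)$, hence support in the closed complement) vanish, and then invokes that Verdier duality commutes with minimal extension; you instead compare coefficients of $(L)$ in the Grothendieck group $\tK{V}{\Omega}$ after applying $D$, using that $D$ permutes $\tmP{V}{\Omega}$ and preserves $\varepsilon_i$. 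Both arguments rest on the same strict inequality and are equivalent in substance; yours is a shade more algebraic, the paper's a shade more geometric. One small thing you should make explicit for the coefficient comparison to be airtight: $D$ really does permute $\tmP{V}{\Omega}$, which follows from your part (i) (so $D$ preserves $\tmQ{V}{\Omega}$) together with the $t$-exactness of Verdier duality for the perverse $t$-structure (so $D$ sends simple perverse sheaves to simple perverse sheaves). With that noted, the argument is complete.
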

\begin{proof}
\quad (i) and (ii) follow from the general property of the Verdier duality functor (see Lemma \ref{lem-VD}).\\
\quad To prove (iii), we use the induction on $\ud{\mb{V}}$. \\
\quad When $\ud{\mb{V}}=0$, the claim is clear by $\tmP{V}{\Omega}=\{\mb{1}_{\pt}\}$ and $D(\mb{1}_{\pt})=\mb{1}_{\pt}$. \\
\quad Suppose $\ud{\mb{V}} \neq 0$. By Lemma \ref{lem:eps}, there exists $i$ such that $\varepsilon_i(L)>0$. We shall prove $D(L)=L$ by the descending induction on $\varepsilon_i(L)$. By Theorem \ref{est} and Lemma \ref{lem:cef}, we have 
\[
F_i(\tEi{L})=[\varepsilon_i(L)]_v(L)+\sum_{L' \in \tmP{V}{\Omega}\colon \varepsilon_i(L')>\varepsilon_i(L)}a_{L'}(L').
\]
By the induction hypothesis on $\wt{\mb{V}}$, $D(\tEi{L})=\tEi{L}$. Hence the lefthand side is $D$-invariant by (ii).  We restrict $F_i(\tEi{L})$ on the open subset $\tEO{V}{\Omega,\le \varepsilon_i(L)}$. Then it is isomorphic to $[\varepsilon_iL]_v(L)|_{\tEO{V}{\Omega,\le \varepsilon_i(L)}}$ and $D$-invariant. Since $L$ is the minimal extension of $L|_{\tEO{V}{\Omega,\le \varepsilon_i(L)}}$, $L$ is $D$-invariant. 
\end{proof}
\begin{rem}
By the result of (iii), we have $a_{L'}(v)=a_{L'}(v^{-1})$ in Theorem \ref{est} (1).
\end{rem}

\begin{lem}\label{lem:div}
For $L \in \tmP{V}{\Omega}$, we have 
\[
F_i^{(a)}(L)=\left[\begin{array}{c}
\varepsilon_i(L)+a \\
a
\end{array}\right]_v(\tFi^aL)+\sum_{L'\colon \varepsilon_i(L')>\varepsilon_i(L)+a}c_{L'}(L')
\]
with $c_{L'} \in \bb{Z}[v,v^{-1}]$.
\end{lem}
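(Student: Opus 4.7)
The plan is to prove this by induction on $a$, using the recursion $F_iF_i^{(a)}=[a+1]_vF_i^{(a+1)}$ from Lemma \ref{lem-df} together with the leading-coefficient estimate in Theorem \ref{est}(1). The case $a=0$ is trivial since $F_i^{(0)}(L)=(L)$, $\tFi^0 L=L$, and $\left[\begin{smallmatrix}\varepsilon_i(L)\\ 0\end{smallmatrix}\right]_v=1$.

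For the inductive step, assume the formula for $a$. First I would record the fact that $\varepsilon_i(\tFi L)=\varepsilon_i(L)+1$ (which is part of Theorem \ref{est}(1)), so by iteration $\varepsilon_i(\tFi^a L)=\varepsilon_i(L)+a$. Then I would apply $F_i$ to both sides of the inductive hypothesis. On the principal term, Theorem \ref{est}(1) gives
\[
F_i(\tFi^a L)=[\varepsilon_i(L)+a+1]_v(\tFi^{a+1}L)+\sum_{L''\colon\varepsilon_i(L'')>\varepsilon_i(L)+a+1}a_{L''}(L''),
\]
and on each correction term $(L')$ with $\varepsilon_i(L')>\varepsilon_i(L)+a$, Theorem \ref{est}(1) produces contributions supported in $\varepsilon_i\ge\varepsilon_i(L')+1>\varepsilon_i(L)+a+1$. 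Thus
\[
F_iF_i^{(a)}(L)=\left[\begin{array}{c}\varepsilon_i(L)+a\\ a\end{array}\right]_v[\varepsilon_i(L)+a+1]_v(\tFi^{a+1}L)+\sum_{L'\colon\varepsilon_i(L')>\varepsilon_i(L)+a+1}d_{L'}(L').
\]

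Next I would invoke the elementary $q$-binomial identity $[n+1]_v\left[\begin{smallmatrix}n\\ k\end{smallmatrix}\right]_v=[k+1]_v\left[\begin{smallmatrix}n+1\\ k+1\end{smallmatrix}\right]_v$ with $n=\varepsilon_i(L)+a$, $k=a$, to rewrite the leading coefficient as $[a+1]_v\left[\begin{smallmatrix}\varepsilon_i(L)+a+1\\ a+1\end{smallmatrix}\right]_v$. Dividing by $[a+1]_v$ via the recursion $F_iF_i^{(a)}=[a+1]_vF_i^{(a+1)}$ then yields the desired expression
\[
F_i^{(a+1)}(L)=\left[\begin{array}{c}\varepsilon_i(L)+a+1\\ a+1\end{array}\right]_v(\tFi^{a+1}L)+\sum_{L'\colon\varepsilon_i(L')>\varepsilon_i(L)+a+1}c_{L'}(L').
\]

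The only subtle point, and the main obstacle, is to ensure $c_{L'}\in\bb{Z}[v,v^{-1}]$ rather than merely in $\bb{Q}(v)$, since the inductive argument naturally produces coefficients after dividing by $[a+1]_v$. I would address this by appealing to Lemma \ref{lem-df} and Proposition \ref{prop-ef}(1): these show that $F_i^{(a+1)}$ is an honest functor $\tmQ{W}{\Omega}\to\tmQ{V}{\Omega}$, so $F_i^{(a+1)}(L)$ is a genuine element of the Grothendieck group $\tKK{\Omega}$, whose coefficients with respect to the basis $\{(L')\mid L'\in\tmP{V}{\Omega}\}$ automatically lie in $\bb{Z}[v,v^{-1}]$. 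Hence the divisibility by $[a+1]_v$ is automatic from the geometric construction, and no separate combinatorial argument is needed.
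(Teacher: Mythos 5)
Your proof is correct and takes essentially the same route as the paper: induction on $a$ via the recursion $F_iF_i^{(a)}=[a+1]_vF_i^{(a+1)}$, the leading-term estimate from Theorem~\ref{est}(1), the Pascal-type $q$-binomial identity, and the observation that integrality of the $c_{L'}$ is automatic because $F_i^{(a+1)}(L)=\mb{1}_{\mb{S}_i^{a+1}}*L[d_{a+1}]$ is a genuine semisimple object of $\tmQ{V}{\Omega}$. The only (cosmetic) difference is that you start the induction at $a=0$ instead of $a=1$, and you spell out the $q$-binomial identity and the book-keeping of the correction terms a little more explicitly than the paper does.
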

\begin{proof}
We shall prove the claim by the induction on $a$. If $a=1$, the claim follows from Theorem \ref{est}. If $a>1$, by the induction hypothesis and Theorem \ref{est}, we have 
\begin{eqnarray*}
F_iF_i^{(a)}(L)&=&\left[\begin{array}{c}
\varepsilon_i(L)+a \\
a
\end{array}\right]_vF_i(\tFi^aL)+\sum_{L'\colon \varepsilon_i(L')>\varepsilon_i(L)+a}c_{L'}F_i(L') \\
&=&[a+1]_v\left(\left[\begin{array}{c}
\varepsilon_i(L)+a+1 \\
a+1
\end{array}\right]_v(\tFi^{a+1}L)+\sum_{L''\colon \varepsilon_i(L'')>\varepsilon_i(L)+a+1}d_{L''}(L'')\right),
\end{eqnarray*}
where $d_{L''} \in \bb{Q}(v)$. Hence
\[
F_i^{(a+1)}L=\left[\begin{array}{c}
\varepsilon_i(L)+a+1 \\
a+1
\end{array}\right]_v(\tFi^{a+1}L)+\sum_{L''\colon \varepsilon_i(L'')>\varepsilon_i(L)+a+1}d_{L''}(L'').
\]
On the other hand, since $F_i^{(a+1)}L=\mb{1}_{\mb{S}_i^{a+1}}*L[d_{a+1}]$ is semisimple, we conclude $d_{L''} \in \bb{Z}[v,v^{-1}]$. 
\end{proof}

\begin{prop}\label{prop-gen}
We have ${}^\theta\!K=\sum{\bb{Z}[v,v^{-1}]F_{i_1}^{(a_1)} \cdots F_{i_k}^{(a_k)}\mb{1}_{\{\pt\}}}$.
\end{prop}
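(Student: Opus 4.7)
Let $M$ denote the $\bb{Z}[v,v^{-1}]$-submodule of ${}^\theta\!K$ generated by all elements of the form $F_{i_1}^{(a_1)}\cdots F_{i_k}^{(a_k)}\mb{1}_{\{\pt\}}$. Since $\tK{V}{\Omega}$ is spanned over $\bb{Z}[v,v^{-1}]$ by the classes $(L)$ with $L\in\tmP{V}{\Omega}$, it suffices to prove $(L)\in M$ for every such $L$. I would argue by induction on $\wt\mb{V}$, with base case $\wt\mb{V}=0$: in this case $\tEO{V}{\Omega}=\{\pt\}$, so $\tmP{V}{\Omega}=\{\mb{1}_{\pt}\}$ which is in $M$ by definition.

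For the inductive step, fix $\mb{V}$ with $\wt\mb{V}\neq 0$ and $L\in\tmP{V}{\Omega}$. By Lemma~\ref{lem:eps}, there exists $i\in I$ with $\varepsilon_i(L)>0$; fix such an $i$ and a $\theta$-orientation $\Omega$ in which $i$ is a sink. Set $a=\varepsilon_i(L)$ and $L_0=\tEi^{\,a}L$. By iterating Theorem~\ref{est}(2), $\tEi^{\,j}L$ is a well-defined simple perverse sheaf with $\varepsilon_i(\tEi^{\,j}L)=a-j$ for each $0\le j\le a$, so $L_0\in\tmP{V'}{\Omega}$ satisfies $\varepsilon_i(L_0)=0$ and lives in weight $\wt\mb{V'}=\wt\mb{V}-a(\alpha_i+\alpha_{\theta(i)})$. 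Moreover Lemma~\ref{lem:cef} yields $\tFi^{\,a}L_0=L$. By the outer induction hypothesis applied to $\mb{V}'$, we have $(L_0)\in M$.

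Now I would invoke Lemma~\ref{lem:div} applied to $L_0$: since $\varepsilon_i(L_0)=0$ and $\left[\begin{smallmatrix}a\\a\end{smallmatrix}\right]_v=1$, we obtain
\[
F_i^{(a)}(L_0)=(L)+\sum_{L'\in\tmP{V}{\Omega}\colon\,\varepsilon_i(L')>a}c_{L'}(L')\qquad\text{with }c_{L'}\in\bb{Z}[v,v^{-1}].
\]
The class $F_i^{(a)}(L_0)$ belongs to $M$ since $(L_0)\in M$ and $M$ is stable under the divided-power operators $F_i^{(a)}$ by construction. To extract $(L)$, I would run an inner descending induction on the value $\varepsilon_i(L)\in\{0,1,\dots,\dim\mb{V}_i\}$ within the fixed weight $\wt\mb{V}$: the maximum value $\varepsilon_i=\dim\mb{V}_i$ serves as the base, where the correction sum is vacuous and $(L)=F_i^{(a)}(L_0)\in M$ directly. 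For smaller values of $\varepsilon_i(L)$, every term $(L')$ in the correction sum has strictly larger $\varepsilon_i$-value, hence already lies in $M$ by the inner induction hypothesis; rearranging then places $(L)$ in $M$ as well.

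I do not anticipate any serious obstacle: all the heavy lifting has already been carried out in Theorem~\ref{est}, Lemma~\ref{lem:cef}, and Lemma~\ref{lem:div}, and the proposition is essentially a bookkeeping consequence of the crystal-like estimates combined with a double induction. The only subtlety worth verifying is that the inner descending induction is well-founded, which follows from the uniform bound $0\le\varepsilon_i(L')\le\dim\mb{V}_i$ for $L'\in\tmP{V}{\Omega}$.
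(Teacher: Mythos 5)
Your proposal is correct and follows essentially the same route as the paper's proof: outer induction on $\wt\mb{V}$, choice of $i$ with $\varepsilon_i(L)>0$ via Lemma~\ref{lem:eps}, the identity $F_i^{(\varepsilon_i(L))}(\tEi^{\varepsilon_i(L)}L)=(L)+\sum_{\varepsilon_i(L')>\varepsilon_i(L)}c_{L'}(L')$ coming from Lemmas~\ref{lem:cef} and~\ref{lem:div}, and an inner descending induction on $\varepsilon_i$. You spell out a few details the paper leaves implicit (that $\varepsilon_i(L_0)=0$, hence the $v$-binomial is $1$, and that $\varepsilon_i\le\dim\mb{V}_i$ grounds the inner induction), but the argument is the same.
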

\begin{proof}
For $L \in \tmP{V}{\Omega}$ such that $\wt{\mb{V}} \neq 0$, there exists $i$ such that $\varepsilon_i(L)>0$. We shall prove that $(L)$ is contained in $\sum{\bb{Z}[v,v^{-1}]F_{i_1}^{(a_1)} \cdots F_{i_k}^{(a_k)}\mb{1}_{\{\pt\}}}$ by the induction on $\wt{\mb{V}}$ and the descending induction on $\varepsilon_i(L)$. We have
\[
F_i^{(\varepsilon_i(L))}(\tEi^{\varepsilon_i(L)}{L})=(L)+\sum_{L' \in \tmP{V}{\Omega}\colon \varepsilon_i(L')>\varepsilon_i(L)}c_{L'}(L')
\]
by Lemma \ref{lem:div} and Lemma \ref{lem:cef}. By the induction hypothesis, we have $c_{L'}(L')$ and $\tEi^{\varepsilon_i(L)}{L}$ are contained in $\sum{\bb{Z}[v,v^{-1}]F_{i_1}^{(a_1)} \cdots F_{i_k}^{(a_k)}\mb{1}_{\{\pt\}}}$. Thus $(L) \in \sum{\bb{Z}[v,v^{-1}]F_{i_1}^{(a_1)} \cdots F_{i_k}^{(a_k)}\mb{1}_{\{\pt\}}}$. 
\end{proof}

\subsection{Main Theorem}
Let us recall 
\[
{}^\theta\!K'\colon =\sum_{(\mb{i},\mb{a})}\bb{Z}[v,v^{-1}](\tLL{i}{a}{\Omega})=\sum{\bb{Z}[v,v^{-1}]F_{i_1}^{(a_1)} \cdots F_{i_k}^{(a_k)}\mb{1}_{\{\pt\}}} \subset {}^\theta\!K.
\]
\begin{thm}\label{mt1} \ 
\begin{enumerate}[(i)]
\item ${}^\theta\!K={}^\theta\!K'$. 
\item For $L \in {}^\theta\!\ms{P}_{\mb{V}}$, we define $\wt(L)=-\wt{\mb{V}}$. Then $(\wt,\tEi,\tFi,\varepsilon_i)$ gives a crystal structure on ${}^\theta\!\ms{P}\colon =\sqcup_{\mb{V}}{}^\theta\!\ms{P}_{\mb{V}}$ in the sence of section \ref{sec-cry}. Here $\mb{V}$ runs over all isomorphism classes of $\theta$-symmetric $I$-graded vector spaces.
\item Let $\mathcal{L}$ be the $\mb{A}_0$-submodule $\sum_{(L) \in {}^\theta\!\ms{P}}\mb{A}_0(L)$ of ${}^\theta\!K$. Then $\{(L) \mod {v\mathcal{L}}|L \in {}^\theta\!\ms{P}\}$ gives a crystal basis of ${}^\theta\!K$. Especially, the actions of modified root operators $\tEi$ and $\tFi$ on $\mathcal{L}/v\mathcal{L}$ are compatible with the actions of $\tEi$ and $\tFi$ on ${}^\theta\!\ms{P}$ introduced in Theorem \ref{est}. 
\end{enumerate}
\end{thm}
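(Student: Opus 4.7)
The plan for part (i) is immediate from Proposition \ref{prop-gen}: the inclusion ${}^\theta\!K' \subseteq {}^\theta\!K$ holds by construction, while Proposition \ref{prop-gen} states that every element of ${}^\theta\!K$ is a $\bb{Z}[v,v^{-1}]$-combination of iterated divided-power applications of $F_i$'s to $\mb{1}_{\{\pt\}}$, hence lies in ${}^\theta\!K'$.

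For parts (ii) and (iii), my plan is to invoke the crystal criterion (Theorem \ref{cricry}) once for each $i \in I$. Fix $i$, set $\alpha := \alpha_i + \alpha_{\theta(i)}$ and $M := {}^\theta\!K \otimes_{\bb{Z}[v,v^{-1}]} \mb{K}$, with weight grading $\wt(L) = -\wt(\mb{V})$ for $L \in {}^\theta\!\ms{P}_{\mb{V}}$ and boson generators $(e,f) = (E_i, F_i)$. The $v$-boson relation $E_iF_i = v^{-2}F_iE_i + 1$ required by the criterion follows from Proposition \ref{prop:action} together with the standing assumption $\theta(i) \neq i$, which kills the $\delta_{\theta(i),i}T_i$ term. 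Finiteness of each weight space is clear since $\tmP{V}{\Omega}$ is finite for each $\mb{V}$, and $M_{\xi+n\alpha} = 0$ for $n \gg 0$ since weights of $\theta$-symmetric $I$-graded vector spaces lie in $\sum_j \bb{Z}_{\ge 0}(\alpha_j + \alpha_{\theta(j)})$.

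I would then verify that $(\wt, \tEi, \tFi, \varepsilon_i)$ is a crystal in the sense of Section \ref{sec-cry}: Lemma \ref{lem:cef} supplies $\tEi \circ \tFi = \id$ and $\tFi \circ \tEi = \id$ where defined, and Theorem \ref{est}(2) shows $\varepsilon_i(\tEi L) = \varepsilon_i(L) - 1$, so iteration forces $\varepsilon_i(L) = \max\{n \ge 0 : \tEi^n L \neq 0\}$. Choosing $G(L) := (L)$, the six order inequalities (\ref{c1})--(\ref{c6}) of Theorem \ref{cricry} are read directly off Theorem \ref{est}: setting $\ell := \varepsilon_i(L)$ and $\ell' := \varepsilon_i(L')$, the principal coefficient $[\ell+1]_v = v^{-\ell}(1 + v^2 + \cdots + v^{2\ell})$ lies in $v^{-\ell}(1 + v\bb{A}_0)$, giving (\ref{c3}); the principal coefficient of $\tEi K$ in $E_i K$ is $v^{1-\varepsilon_i(K)} \in v^{1-\ell}(1 + v\bb{A}_0)$, giving (\ref{c4}); and the tail bounds $a_{L'} \in v^{2-\ell'}\bb{Z}[v]$ and $b_{K'} \in v^{1-\ell'}\bb{Z}[v]$ from Theorem \ref{est} have strictly better order than what (\ref{c1}), (\ref{c2}), (\ref{c5}), (\ref{c6}) demand, precisely because those tail terms appear only when $\ell' > \ell+1$ (respectively $\ell' > \ell-1$).

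With all hypotheses of Theorem \ref{cricry} verified, its conclusion yields $\tEi \mathcal{L}, \tFi \mathcal{L} \subset \mathcal{L}$ together with $\tEi(L) \equiv (\tEi L)$ and $\tFi(L) \equiv (\tFi L)$ modulo $v\mathcal{L}$, which simultaneously establishes (ii) and (iii). The main effort is the bookkeeping of coefficient orders in the step above; since Theorem \ref{est} was crafted precisely to match Theorem \ref{cricry}, no genuine obstacle remains, and no additional estimates beyond those already established are needed.
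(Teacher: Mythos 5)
Your proposal is correct and takes essentially the same route as the paper: part (i) by Proposition \ref{prop-gen}, the crystal axioms from Lemma \ref{lem:cef} and the $\varepsilon_i$-bookkeeping, the coefficient bounds (\ref{c1})--(\ref{c6}) read off from Theorem \ref{est}, and then Theorem \ref{cricry} to close. You merely spell out a few steps the paper leaves implicit (the boson relation from Proposition \ref{prop:action} combined with $\theta(i)\neq i$, the finiteness hypotheses, and the explicit order-of-vanishing checks), which is exactly the intended reading.
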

\begin{proof}
(i) is nothing but Proposition \ref{prop-gen}. \\
(ii) \ By the definition of $\varepsilon_i(L), \tFi$ and $\tEi$, and Lemma \ref{lem:cef}, we conculde that $(\wt,\tEi,\tFi,\varepsilon_i)$ gives a crystal structure on ${}^\theta\!\ms{P}\colon =\sqcup_{\mb{V}}{}^\theta\!\ms{P}_{\mb{V}}$ in the sence of section \ref{sec-cry}(i)-(iv). By the estimates in Theorem \ref{est}, the actions of $E_i$ and $F_i$ on $(L) \ (L \in {}^\theta\!\ms{P})$ satisfy the conditions \ref{c1}-\ref{c6} in section \ref{sec-cry}. Thus we obtain the claim. \\
(iii) follows from Theorem \ref{cricry}.
\end{proof}

\begin{lem}\label{lem:eps2}
We have $\{v \in {}^\theta\!K \ | \ \text{$E_iv=0$ for any $i \in I$}\}=\bb{Z}[v,v^{-1}]\mb{1}_{\{\pt\}}$.
\end{lem}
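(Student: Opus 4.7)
The containment $\bb{Z}[v,v^{-1}]\mb{1}_{\{\pt\}}\subset\{u\in{}^\theta\!K\mid E_iu=0\ \text{for all}\ i\}$ is automatic: $\mb{1}_{\{\pt\}}$ sits in the summand $\tK{V}{\Omega}$ for $\mb{V}=0$, and $E_i$ vanishes there since no $\theta$-symmetric $I$-graded vector space has weight $-\alpha_i-\alpha_{\theta(i)}$. For the reverse inclusion the plan is to exploit the crystal basis structure of Theorem \ref{mt1}(iii) together with Lemma \ref{lem:eps}.

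Assume $u\in{}^\theta\!K$ with $E_iu=0$ for every $i$. Since each $E_i$ preserves the weight grading ${}^\theta\!K=\bigoplus_\mb{V}\tK{V}{\Omega}$ (shifted by $\alpha_i+\alpha_{\theta(i)}$), I may reduce to $u\in\tK{V}{\Omega}$ for a single $\mb{V}$. If $\mb{V}=0$, then $\tEO{V}{\Omega}$ is a point and $\tK{V}{\Omega}=\bb{Z}[v,v^{-1}]\mb{1}_{\{\pt\}}$, so there is nothing to prove. Otherwise I assume $\mb{V}\neq 0$ and $u\neq 0$ and aim for a contradiction. After multiplying $u$ by a suitable power of $v$, I may further assume $u\in\cal{L}_{\mb{V}}\setminus v\cal{L}_{\mb{V}}$, where $\cal{L}_{\mb{V}}\seteq\sum_{L\in\tmP{V}{\Omega}}\mb{A}_0(L)$ is the $\mb{V}$-component of the crystal lattice $\cal{L}$ from Theorem \ref{mt1}(iii); the class $\bar u\in\cal{L}_\mb{V}/v\cal{L}_\mb{V}$ is then nonzero.

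Because Proposition \ref{prop:action} together with $\theta(i)\neq i$ gives the $v$-boson relation $E_iF_i=v^{-2}F_iE_i+1$, the formalism of Section \ref{sec-cry} applies to ${}^\theta\!K\otimes_{\bb{Z}[v,v^{-1}]}\mb{K}$, and $E_iu=0$ immediately forces $\tEi u=0$ via the trivial decomposition $u=F_i^{(0)}u$. By Theorem \ref{mt1}(iii), $\tEi$ preserves $\cal{L}$ and, after passage to $\cal{L}/v\cal{L}$, acts on each basis vector by $\tEi\overline{(L)}=\overline{(\tEi L)}$ (with $\overline{(0)}=0$). Writing $\bar u=\sum_L\bar c_L\overline{(L)}$ with $\bar c_L\in\bb{Q}$, the relation $\tEi\bar u=0$ becomes $\sum_L\bar c_L\overline{(\tEi L)}=0$. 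By Lemma \ref{lem:cef} the map $L\mapsto\tEi L$ is injective on $\{L:\tEi L\neq 0\}$ (since $\tFi\tEi L=L$ there), so these classes are distinct basis elements of $\cal{L}/v\cal{L}$; hence $\bar c_L=0$ whenever $\tEi L\neq 0$. Running through all $i\in I$ kills $\bar c_L$ for every $L$ admitting some $\varepsilon_i(L)>0$. But Lemma \ref{lem:eps} guarantees that every $L\in\tmP{V}{\Omega}$ with $\mb{V}\neq 0$ satisfies $\varepsilon_i(L)>0$ for some $i$. Thus $\bar c_L=0$ for all $L$, so $\bar u=0$, contradicting $u\notin v\cal{L}_\mb{V}$.

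The substantive ingredient is Theorem \ref{mt1}(iii), which both furnishes the lattice $\cal{L}$ stable under $\tEi$ and identifies the induced action on $\cal{L}/v\cal{L}$ with the combinatorial crystal operation on ${}^\theta\!\ms{P}$; granted this, the only subtle point is the implication $E_iu=0\Rightarrow\tEi u=0$, which follows at once from the uniqueness of the $v$-boson decomposition. No further recourse to the fine estimates of Theorem \ref{est} is needed once the crystal structure is in hand.
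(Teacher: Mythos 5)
Your proposal is correct and follows essentially the same route as the paper's proof: scale $u$ into the lattice $\cal{L}$ so that $\bar u\in\cal{L}/v\cal{L}$ is nonzero, use $E_iu=0\Rightarrow\tEi u=0$ and Theorem~\ref{mt1}(iii) to pass to the crystal, and then invoke Lemma~\ref{lem:eps} to kill every coefficient. Your version is slightly tidier than the paper's — you set up a direct contradiction with $u\in\cal{L}\setminus v\cal{L}$ rather than the paper's ``for all $c$'' descent, and you explicitly record the injectivity of $L\mapsto\tEi L$ via Lemma~\ref{lem:cef}, a point the paper leaves implicit — but the substance is identical.
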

\begin{proof}
Suppose that $E_i\left(\sum{a_L(L)}\right)=0$ for any $L$. Then $a_L \in v^{c}\bb{Z}[v]$ for some $c$. Put $\widetilde{a_L}=v^{-c}a_L \in \bb{Z}[v]$. By the definition of the modified root operators and Theorem \ref{mt1}(iii), we have $\tEi\left(\sum{\tilde{a_L}(L)}\right)=0$. Specializing $v$ to $0$, we have $\widetilde{a_L}(0)=0$ if $\tEi{L} \neq 0$. But for any $L$ such that $\wt(L) \neq 0$, there exists $i \in I$ such that $\varepsilon_i(L)>0$. Hence we obtain $\widetilde{a_L} \in v\bb{Z}[v]$ and hence $a_L \in v^{c+1}\bb{Z}[v]$. By the induction on $c$, we have $a_L \in v^{c}\bb{Z}[v]$ for any $c$. Thus we conclude $a_L=0$ for $\wt(L) \neq 0$.
\end{proof}

\begin{thm}\label{mt2} \ 
\benu[{\rm(i)}]
\item ${}^\theta\!K \otimes_{\bb{Z}[v,v^{-1}]} \bb{Q}(v) \cong V_\theta(0)$ as a $B_\theta(\mf{g})$-module. The involution induced by the Verdier duality functor coincides with the bar involution on $V_\theta(0)$.
\item $\{(L) \ | \ L \in {}^\theta\!\ms{P}\}$ gives the lower global basis on $V_\theta(0)$.
\eenu
\end{thm}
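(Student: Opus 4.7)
The plan is to prove (i) by invoking the highest-weight characterization of $V_\theta(0)$ from Proposition \ref{prop:Vtheta}, using Lemma \ref{lem:eps2} (cyclicity of $\mb{1}_\pt$) and Proposition \ref{prop-gen} (generation of ${}^\theta\!K$ by divided powers) as the two non-formal inputs, and to prove (ii) by assembling $\mathcal{L}$, its bar-image $\overline{\mathcal{L}}$, and $\Uf_\mb{A}\vac_0$ into a balanced triple, with the $D$-invariance of simple perverse sheaves (Proposition \ref{prop-V}(iii)) as the essential new ingredient.

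For (i), first set $M \seteq {}^\theta\!K \otimes_\mb{A} \mb{K}$, so that $M$ is a $B_\theta(\mf{g})$-module by Proposition \ref{prop:action} and Theorem \ref{mt1}(i). The unit object $\mb{1}_\pt \in {}^\theta\!K_{\{0\}}$ satisfies $E_i \mb{1}_\pt = 0$ (the restriction diagram is vacuous) and $T_i \mb{1}_\pt = \mb{1}_\pt$, and by Lemma \ref{lem:eps2} it spans the entire $E_i$-null space of $M$ up to scalars. Thus $\mb{1}_\pt$ satisfies (a), (b), (c) of Proposition \ref{prop:Vtheta}(i) for $\lambda = 0$, so the uniqueness clause supplies a $B_\theta(\mf{g})$-homomorphism $\Psi\colon V_\theta(0) \to M$ with $\vac_0 \mapsto \mb{1}_\pt$. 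Proposition \ref{prop-gen} together with Lemma \ref{cor-div} identifies the image of $\Psi$ with all of $M$, while irreducibility of $V_\theta(0)$ forces injectivity. For the bar involution, observe that $D$ on $M$ and the bar involution on $V_\theta(0)$ are both bar-linear involutions (sending $v \mapsto v^{-1}$) that fix $\mb{1}_\pt$ and commute with each $F_i$ (by Proposition \ref{prop-V}(ii) and by Proposition \ref{prop:Vtheta}(iii), respectively); they therefore agree on the $F_i$-orbit of $\mb{1}_\pt$, which is all of $M$.

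For (ii), put $\mathcal{L} \seteq \sum_{L \in {}^\theta\!\ms{P}} \mb{A}_0 (L)$, so that by Theorem \ref{mt1}(iii), $(\mathcal{L}, \{(L) \bmod v\mathcal{L}\})$ is the crystal basis of $M \cong V_\theta(0)$. By Proposition \ref{prop-V}(iii), $D((L)) = (L)$ for every $L$, hence $\overline{\mathcal{L}} = \sum_L \mb{A}_\infty (L)$. Each $(L)$ consequently lies in the triple intersection $E \seteq \mathcal{L} \cap \overline{\mathcal{L}} \cap \Uf_\mb{A}\vac_0$, where $\Uf_\mb{A}\vac_0$ is identified with ${}^\theta\!K$ via $\Psi$ and Proposition \ref{prop-gen}. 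The three modules $\mathcal{L}, \overline{\mathcal{L}}, \Uf_\mb{A}\vac_0$ each generate $M$ as a $\mb{K}$-vector space (they all contain the $\mb{A}$-basis $\{(L)\}$). Surjectivity of $E \to \mathcal{L}/v\mathcal{L}$ is immediate since the crystal basis already lies in $E$; injectivity reduces to the identity $\mb{A}_\infty \cap v\mb{A}_0 = 0$ by comparing the $(L)$-expansions of an element $x \in E \cap v\mathcal{L}$ from the $\overline{\mathcal{L}}$-side and the $v\mathcal{L}$-side. Thus $(\mathcal{L}, \overline{\mathcal{L}}, \Uf_\mb{A}\vac_0)$ is balanced, and the inverse $G^\low\colon \mathcal{L}/v\mathcal{L} \isoto E$ sends $(L) \bmod v\mathcal{L}$ to $(L)$. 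This is exactly the assertion that $\{(L) \mid L \in {}^\theta\!\ms{P}\}$ is the lower global basis of $V_\theta(0)$.

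The principal subtle point — already established in the preceding sections — is the cyclicity of $\mb{1}_\pt$ under the $E_i$'s (Lemma \ref{lem:eps2}), which is what validates the uniqueness argument pinning $M$ to $V_\theta(0)$ rather than to some proper quotient of $\Uf$. The second essential ingredient, Proposition \ref{prop-V}(iii), upgrades the crystal basis of Theorem \ref{mt1}(iii) to a genuine lower global basis by providing the $D$-invariance required for balancedness. Once both are in hand, the proof of the main theorem is a formal compilation of results from Sections 4 and 5.
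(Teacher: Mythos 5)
Your overall strategy matches the paper's: use Lemma \ref{lem:eps2} together with the characterization of $V_\theta(0)$ in Proposition \ref{prop:Vtheta} for (i), and combine $D$-invariance of the $(L)$'s with Theorem \ref{mt1}(iii) for (ii). Part (ii) of your argument is sound and even a bit more explicit than the paper about the balanced triple $(\mathcal{L},\overline{\mathcal{L}},\Uf_\mb{A}\vac_0)$; the bar-involution claim in (i) is also handled correctly by uniqueness on the $F_i$-orbit of $\mb{1}_\pt$.

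The gap is in the opening move of (i): you assert that $M\seteq{}^\theta\!K\otimes_\mb{A}\mb{K}$ ``is a $B_\theta(\mf{g})$-module by Proposition \ref{prop:action} and Theorem \ref{mt1}(i).'' Proposition \ref{prop:action} only supplies the commutation relations $E_iF_j-v^{-(\alpha_i,\alpha_j)}F_jE_i=\delta_{ij}+\delta_{\theta(i),j}T_i$ and the $T_i$-relations; it says nothing about the $v$-Serre relations, which are part of the defining presentation of $B_\theta(\mf{g})$ in Definition \ref{def:Bt}(v). Your entire subsequent argument — invoking the uniqueness in Proposition \ref{prop:Vtheta}(i) to manufacture the isomorphism $\Psi$ — presupposes that $M$ is a genuine $B_\theta(\mf{g})$-module, so the Serre relations must be verified, not assumed. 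This is precisely where the paper spends the bulk of its proof of (i): setting $S_e=\sum_{k=0}^{b}(-1)^kE_i^{(k)}E_jE_i^{(b-k)}$ and $S_f=\sum_{k=0}^{b}(-1)^kF_i^{(k)}F_jF_i^{(b-k)}$, one first shows $S_e$ commutes with every $F_k$ (a consequence of Proposition \ref{prop:action}) and kills $\mb{1}_\pt$, so $S_e=0$ on all of ${}^\theta\!K$ by Proposition \ref{prop-gen}; then, dually, $S_f$ commutes with every $E_k$, and an induction on $\wt\mb{V}$ combined with Lemma \ref{lem:eps2} forces $S_f(L)=0$ for every simple perverse sheaf $L$. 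Without this step your map $\Psi$ is not defined, so you should add it before the highest-weight argument can be run.

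Once the Serre relations are in place, the rest of your proposal is essentially the paper's proof and needs no further adjustment.
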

\begin{proof}
(i) \ By Proposition \ref{prop:action}, to check the defining relations of $B_\theta(\mf{g})$, we only need to prove the $v$-Serre relations. Put
\begin{eqnarray*}
S_e=\sum^b_{k=0}(-1)^kE^{(k)}_iE_jE^{(b-k)}_i, \quad
S_f=\sum^b_{k=0}(-1)^kF^{(k)}_iF_jF_i^{(b-k)}
\end{eqnarray*}
and note that $F_kS_e=S_eF_k$ and $E_kS_f=S_fE_k$ for any $k \in I$. \\
\quad Since $\tKK{\Omega}$ is generated by $F_k^{(n)}$'s from $\phi\colon =\mb{1}_{\{\pt\}}$ and $S_e\phi=0$, we have $S_ev=0$ for any $v \in \tKK{\Omega}$. We show $S_f(L)=0$ for any $L \in \tmP{V}{\Omega}$ by the induction on $\ud{\mb{V}}$. If $\wt(S_f(L)) \neq 0$, we have we have $E_kS_f(L)=S_fE_k(L)=0$ for any $k \in I$ by applying the induction hypothesis to $E_k(L)$. Since $\wt(S_f(L)) \neq 0$, we have $S_f(L)=0$ by Lemma \ref{lem:eps2}. Hence ${}^\theta\!K$ is a $B_\theta(\mf{g})$-module. 
Note that $T_i\mb{1}_{\{\pt\}}=\mb{1}_{\{\pt\}}$ for any $i \in I$. We conclude ${}^\theta\!K \cong V_\theta(0)$ by Lemma \ref{lem:eps2} and the characterization of $V_\theta(0)$ in Proposition \ref{prop:Vtheta}. \\
(ii) \ We already know that $\mathcal{L}=\sum_{L \in {}^\theta\!\ms{P}}{\mb{A}_0(L)}$ is a crystal lattice and $\{(L) \mod{v\mathcal{L}}\}$ is a basis of $\mathcal{L}/v\mathcal{L}$. Note that $\sum_{L \in {}^\theta\!\ms{P}}\bb{Z}[v,v^{-1}](L)$ is stable under the actions of $E_i$'s and $F_i^{(a)}$'s by Lemma \ref{lem:div} and $L$ is $D$-invariant, namely bar-invariant. Moreover $\{(L) \ | \ L \in {}^\theta\!\ms{P}\}$ is a basis of the $\mb{A}_0$-module $\mathcal{L}$ and also a basis of the $\bb{Z}[v,v^{-1}]$-module ${}^\theta\!K$. Hence we conclude that $\{(L) \ | \ L \in {}^\theta\!\ms{P}\}$ gives the lower global basis on $V_\theta(0)$. 
\end{proof}

\begin{cor}
For any Kac-Moody algebra $\mf{g}$ with a symmetric Cartan matrix, the $B_\theta(\mf{g})$-module $V_\theta(0)$ has a crystal basis and a lower global basis, namely Conjecture \ref{conj:crystal} and Conjecture \ref{conj:bal} is true if $\lambda=0$.
\end{cor}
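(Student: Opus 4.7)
The plan is to deduce the corollary directly from Theorems \ref{mt1} and \ref{mt2}, which have been established for an arbitrary symmetric Kac-Moody $\mf{g}$. The corollary is essentially a dictionary translation: it restates those geometric conclusions in the algebraic language of Conjectures \ref{conj:crystal} and \ref{conj:bal} specialized to $\lambda=0$. Since everything hard has already been done, I expect no genuine obstacle; the only care required is to confirm that the geometric operators $\tEi,\tFi$ on ${}^\theta\!K$ agree with the algebraic modified root operators on $V_\theta(0)$ under the isomorphism of Theorem \ref{mt2}(i).

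For Conjecture \ref{conj:crystal}, I would invoke the $B_\theta(\mf{g})$-module isomorphism ${}^\theta\!K \otimes_{\bb{Z}[v,v^{-1}]} \bb{Q}(v) \isoto V_\theta(0)$ of Theorem \ref{mt2}(i), which sends the generator $\mb{1}_{\{\pt\}}$ to $\vac_0$ (both are characterized by being annihilated by all $E_i$ and fixed by all $T_i$, cf.\ Lemma \ref{lem:eps2} and Proposition \ref{prop:Vtheta}). Since the algebraic and geometric operators of the same name are both defined from the common pair $E_i,F_i$ via the $v$-boson decomposition, they correspond under this isomorphism. Theorem \ref{mt1}(iii) then guarantees that $\mathcal{L}=\sum_{L \in {}^\theta\!\ms{P}}\mb{A}_0(L)$ is stable under $\tEi,\tFi$ and that $\{(L) \bmod v\mathcal{L}\}_{L \in {}^\theta\!\ms{P}}$ is a basis of $\mathcal{L}/v\mathcal{L}$ permuted by these operators modulo $\{0\}$. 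Under the identification $\mathcal{L} = L_\theta(0)$, parts (1)--(3) of the conjecture are immediate, and part (4) follows from Lemma \ref{lem:cef}. Finally, a standard inductive argument using Lemma \ref{lem:eps} shows that every simple perverse sheaf is obtainable from $\mb{1}_{\{\pt\}}$ by repeated application of $\tFi$'s, so the geometric basis coincides with $B_\theta(0)$ as defined in the conjecture.

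For Conjecture \ref{conj:bal}, I would combine Theorem \ref{mt2}(ii) with the bar-invariance $D(L) \cong L$ from Proposition \ref{prop-V}(iii): the common basis $\{(L)\}_{L \in {}^\theta\!\ms{P}}$ consists of bar-invariant elements of $V_\theta(0)$ and simultaneously generates
\[
L_\theta(0) = \sum_L \mb{A}_0 (L), \qquad \overline{L_\theta(0)} = \sum_L \mb{A}_\infty (L), \qquad V_\theta(0)^{\low}_{\mb{A}} = \sum_L \mb{A}(L).
\]
The equality $V_\theta(0)^{\low}_{\mb{A}} = \sum_L \mb{A}(L)$ uses Proposition \ref{prop-gen}, which shows ${}^\theta\!K$ is spanned over $\mb{A}$ by divided-power monomials $F_{i_1}^{(a_1)} \cdots F_{i_k}^{(a_k)}\mb{1}_{\{\pt\}}$. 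The triple intersection $E \seteq L_\theta(0) \cap \overline{L_\theta(0)} \cap V_\theta(0)^{\low}_{\mb{A}}$ is then $\sum_L \bb{Q}(L)$, and the natural map $E \to L_\theta(0)/vL_\theta(0)$ sends $(L)$ to its crystal class; it is an isomorphism by Theorem \ref{mt1}(iii), verifying balancedness and completing the proof.
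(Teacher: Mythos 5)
Your proposal is correct and takes essentially the same route the paper implicitly intends, namely reading off Conjectures \ref{conj:crystal} and \ref{conj:bal} (for $\lambda=0$) directly from Theorems \ref{mt1} and \ref{mt2} together with Lemmas \ref{lem:cef}, \ref{lem:eps}, \ref{lem:div}, Proposition \ref{prop-gen} and the $D$-invariance in Proposition \ref{prop-V}(iii). The only cosmetic point is that your identification $\mathcal{L}=L_\theta(0)$ should logically come \emph{after} (not before) the inductive argument that every $(L)$ is reached from $\mb{1}_{\{\pt\}}$ by the $\tFi$'s modulo $v\mathcal{L}$, followed by a Nakayama-type step in each weight space; but this is exactly the ``standard inductive argument'' you gesture at, so the content matches.
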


\begin{exa}
Let us consider the case $\mf{g}=\mf{sl}_3$, $I=\{\pm{1}\}$ and $\theta(i)=-i$. Fix a $\theta$-symmetric orientation $-1 \stackrel{\Omega}{\longrightarrow} 1$. For a $\theta$-symmetric $I$-graded vector space $\mb{V}$ such that $\wt(\mb{V})=n(\alpha_{-1}+\alpha_1)$, $\tEO{V}{\Omega}$ is the set of skew symmetric matrix $x$ of size $n$. Its $\tG{V}$-orbits are parametrized by the rank $2r \ (0 \le r \le \lfloor\frac{n}{2}\rfloor)$ of $x$. We denote $\mathcal{O}_r^n$ by the orbit consisting of $n \times n$ skew symmetric matrices $x$ of rank $2r$. Note that any simple local system on each $\tG{V}$-orbit is trivial. Let us denote $\IC_r^n$ by the simple perverse sheaves corresponding to the orbit $\mathcal{O}_r^n$. Note that $\varepsilon_1(\IC_r^n)=n-2r$. \\
\quad Let $\mb{W}$ be a $\theta$-symmetric $I$-graded vector space such that $\wt(\mb{W})=(n-1)(\alpha_{-1}+\alpha_1)$. We consider the diagram:
\[
\xymatrix{
\tEO{W}{\Omega} & \tpE{\Omega}\ar[r]_{p_2}\ar[l]^(.3){p_1} & \tppE{\Omega}\ar[r]_{p_3} & \tEO{V}{\Omega}.
}
\]
Note that the fibers of $p_3$ on $\mathcal{O}_r^n$ is isomorphic to $\mb{P}^{n-1-2r}$. Then
\[
F_1(\IC_r^{n-1})=[n-2r]_v(\IC_r^n)+\sum_{k=0}^{r-1}a_{k,n}(\IC_k^n)
\]
where $a_{k,n} \in v^{2-n+2k}\bb{Z}[v]$. We obtain the crystal graph:
\[
\xymatrix@R=.08em@C=2em{&&&&\IC_0^4\ar@<.2pc>[r]^{1}\ar@<-.2pc>[r]_{-1}&
\IC_0^5\cdots\\
&&\IC_0^2\ar@<.2pc>[r]^{1}\ar@<-.2pc>[r]_{-1}&\IC_0^3
\ar[ru]^{1}\ar[rd]|{-1}\\
\IC_0^0\ar@<.2pc>[r]^(.4){1}\ar@<-.2pc>[r]_(.35){-1}&
\IC_0^1\ar[ru]^{1}\ar[rd]_{-1}&&&\IC_1^4\ar@<.2pc>[r]^{1}\ar@<-.2pc>[r]_{-1}&\IC_1^5\cdots\\
&&\IC_1^2\ar@<.2pc>[r]^{1}\ar@<-.2pc>[r]_{-1}&\IC_1^3\ar[ru]|{1}\ar[rd]_{-1}\\
&&&&\IC_2^4\ar@<.2pc>[r]^{1}\ar@<-.2pc>[r]_{-1}&{\IC_2^5\cdots}
}
\]
Therefore we recover the crystal graph parametrized by "$\theta$-restricted multi-segments" in \cite[Example 4.7 (1)]{EK2}.
\end{exa}

\end{document}